\newtheorem {theorem}{Theorem}[section]
\newtheorem {lemma}[theorem]{Lemma}
\newtheorem {proposition}[theorem]{Proposition}
\newtheorem {corollary}[theorem]{Corollary}
\newtheorem {definition}[theorem]{Definition}
\theoremstyle{remark}
\newtheorem {remark}[theorem]{Remark}
\renewcommand\th{^{\text{th}}}
\newcommand\st{^{\text{st}}}
\newcommand\Xs{\mathbb{X}}
\newcommand\Os{\mathbb{O}}
\newcommand\nid{\noindent}
\def\k {\mathbf{k}}
\def\kk {\Bbbk}
\def\S {\mathbf{S}}
\def\Sym{\mathrm{Sym}}
\newcommand\Rect{\mathrm{Rect}^\circ}
\newcommand\Rectx{\mathrm{Rect}^\circ_{\X}}
\newcommand\Halfx{\mathrm{Half}^\circ_{\X}}
\newcommand\Quartx{\mathrm{Quart}^\circ_{\X}}
\newcommand\Tri{\mathrm{Tri}}
\def\zz {{\mathbb{Z}}}
\def\rr {{\mathbb{R}}}
\def\cc {{\mathbb{C}}}
\def\x {\mathbf{x}}
\def\y {\mathbf{y}}
\def\z {\mathbf{z}}
\def\X {\mathbb{X}}
\def\O {\mathbb{O}}
\def\nil {\mathfrak{N}}
\def\B {\mathcal{B}}
\def\H{\mathcal{H}}
\def\T {\mathcal{T}}
\def\L {\mathcal{L}}
\def\R {\mathcal{R}}
\def\del {{\partial}}
\def\I {\mathcal{I}}
\def\alg {\mathcal{A}}
\def\A {\mathfrak{A}}
\def\Filt {\mathcal{F}}
\def\zed {\mathcal{Z}}
\def\J {\mathcal{J}}
\def\a {\mathbf{a}}
\def\oast {\circledast}
\def\cpa{CPA^-}
\def\cpd{CPD^-}
\def\cpaa {CP\{AA\}^-}
\def\cpad {CP\{AD\}^-}
\def\cpda {CP\{DA\}^-}
\def\cpdd {CP\{DD\}^-}
\def\ha {\H^A}
\def\hd {\H^D}
\def\haa {\H^{\{AA\}}}
\def\had {\H^{\{AD\}}}
\def\hda {\H^{\{DA\}}}
\def\hdd {\H^{\{DD\}}}
\def\cpaah {\cpaa(\haa)}
\def\cpadh {\cpad(\had)}
\def\cpdah {\cpda(\hda)}
\def\cpddh {\cpdd(\hdd)}
\def\xaa {\x^{\{AA\}}}
\def\yaa {\y^{\{AA\}}}
\def\xad {\x^{\{AD\}}}
\def\yad {\y^{\{AD\}}}
\def\xdd {\x^{\{DD\}}}
\def\ydd {\y^{\{DD\}}}
\def\xda {\x^{\{DA\}}}
\def\yda {\y^{\{DA\}}}
\def\Oaa {\O^{\{AA\}}}
\def\Xaa {\X^{\{AA\}}}
\def\cro {\operatorname{cr}}
\def\eps {\varepsilon}
\def\ker {{\operatorname{Ker}}}
\def\im {{\operatorname{Im}}}
\def\inv {{\operatorname{inv}}}
\def\Inv {{\operatorname{Inv}}}
\def\id  {{\operatorname{id}}}
\def\gr  {{\operatorname{gr}}}
\def\gl {\mathfrak{gl}}
\def\ol {\overline}
\def\ax {\phantom{.}^A \x}
\newcommand\aalpha{\mbox{\boldmath$\alpha$}}
\newcommand\bbeta{\mbox{\boldmath$\beta$}}
\def\bs{\backslash}
\def\nin{\notin}
\def\ra{\rightarrow}
\def\cc{\mathbf{c}}
\def\cI{\mathcal{Z}}
\def\nid{\noindent}
\definecolor{CMcolor}{rgb}{0.0,0.5,0.75}	
\definecolor{CDcolor}{rgb}{0.8,0.0,0.2}
\begin{document}

\title{On the algebra of cornered Floer homology}

\author[Christopher L. Douglas]{Christopher L. Douglas}
\address {Mathematical Institute, University of Oxford, 24-29 St. Giles'\\ Oxford, OX1 3LB, UK}
\email{cdouglas@maths.ox.ac.uk}

\author[Ciprian Manolescu]{Ciprian Manolescu}
\address {Department of Mathematics, UCLA, 520 Portola Plaza\\ 
Los Angeles, CA 90095, USA}
\email {cm@math.ucla.edu}

\begin{abstract}Bordered Floer homology associates to a parametrized oriented surface a certain differential graded algebra. We study the properties of this algebra under splittings of the surface. To the circle we associate a differential graded 2-algebra, the nilCoxeter sequential 2-algebra, and to a surface with connected boundary an algebra-module over this 2-algebra, such that a natural gluing property is satisfied. Moreover, with a view toward the structure of a potential Floer homology theory of 3-manifolds with codimension-two corners, we present a decomposition theorem for the Floer complex of a planar grid diagram, with respect to vertical and horizontal slicing.
\end{abstract}

\maketitle

\tableofcontents

\section {Introduction}

Much progress in low-dimensional topology has come from the study of topological quantum field theories (TQFT's) and their related invariants. Roughly speaking, a $(d+1)$-dimensional TQFT is a functor from the bordism category of  $d$-dimensional smooth closed manifolds (and smooth cobordisms between them) to the category of vector spaces (and linear maps) \cite{AtiyahTQFT, WittenTQFT}. In dimension $3+1,$ a  theory of particular interest is Heegaard Floer homology, introduced by Ozsv\'ath and Szab\'o  \cite{HolDisk, HolDiskTwo, HolDiskFour}. Strictly speaking, Heegaard Floer homology is a TQFT only in a loose sense, because it is restricted to connected manifolds and cobordisms. Nevertheless, its TQFT properties have been exploited in \cite{HolDiskFour} to produce invariants for closed, smooth four-manifolds that are able to detect exotic smooth structures, and are conjecturally isomorphic to the Seiberg-Witten invariants of \cite{SW1, SW2, Witten}. It should be noted that Heegaard Floer homology comes in several variants, the simplest of which is called {\em hat}. While hat Heegaard Floer theory does not give rise to interesting invariants for closed 4-manifolds, it is sufficient for most three-dimensional applications, and easier to compute than the other versions.

Recently, there has been a surge of interest in extending TQFT's down in dimension, see for example \cite{BaezDolan, LurieTQFT, BDH, FHLT}.  Ideally, if one has a $(d+1)$-dimensional TQFT, one would like to construct a related {\em local field theory}. This would consist of (categorical) invariants of $i$-dimensional manifolds with codimension-$k$ corners, for all $k \leq i \leq d+1$, such that all natural gluing relations are satisfied. In the context of hat Heegaard Floer theory, an important step in this direction was taken by Lipshitz, Ozsv\'ath and Thurston in \cite{LOT, LOTbimodules, LOTmorphisms}, where they introduced {\em bordered Floer homology}. To a suitable handle decomposition of a closed, oriented, connected surface $F$ they associate a differential graded algebra $\alg(F),$ and to a closed, oriented $3$-manifold $Y$ with parametrized boundary $F$ they associate two $\alg_{\infty}$-modules over $\alg(F)$, denoted $\widehat{CFA}(Y)$ and $\widehat{CFD}(Y)$. They then prove a pairing theorem for the Heegaard Floer homology of a closed oriented $3$-manifold cut into two pieces along a surface. The algebra $\alg(F)$ depends on the chosen parametrization (that is, handle decomposition) of $F$. However, in \cite{LOTbimodules} it is proved that the derived category of dg-modules over $\alg(F)$ is an invariant of $F$, up to equivalence. We refer to $\alg(F)$ as the {\em matching algebra}.

The next step in the program of constructing a local Heegaard Floer theory would be to define suitable invariants (``cornered Floer homology'') for $3$-manifolds with codimension-two corners. Our aim is to present some partial results in this direction. Although constructing cornered Floer homology is beyond the scope of the paper, we hope that our constructions will shed some light on the algebraic form that cornered Floer homology should take. 

Our first goal is to propose a candidate for what local Heegaard Floer theory should associate to the circle. The answer takes the form of a differential graded {\em 2-algebra}, that is, a chain complex together with two multiplication operations, $*$ and $\cdot,$ satisfying certain compatibility relations. In particular, a weak form of commutation between $*$ and $\cdot$ is required. The 2-algebra we will consider is called the {\em nilCoxeter sequential 2-algebra}. (A signed version of this 2-algebra was  used by Khovanov in \cite{Khovanov12}, for the categorification of the positive half of quantum $\gl(1|1)$.) Concretely, the nilCoxeter sequential 2-algebra is given as the direct product of all nilCoxeter algebras $\nil_m$, where $\nil_m$ is the unital algebra generated (over a field of characteristic two) by elements $\sigma_i, \ i=1, \dots, m-1,$ subject to the relations:
\begin {eqnarray}
 \sigma_i^2 &=& 0, \label{eq:nc1} \\
 \sigma_i \sigma_j &=& \sigma_j \sigma_i  \ \ \text{for } \ |i - j | > 1,\label{eq:nc2} \\
 \sigma_i \sigma_{i+1} \sigma_i &=& \sigma_{i+1} \sigma_i \sigma_{i+1}. \label{eq:nc3}
\end {eqnarray}

\nid The product of the multiplications of the algebras $\nil_m$ induces the operation $\cdot$ on their direct product $\nil.$ The second multiplication $*$ comes from the natural concatenation maps $* : \nil_m \otimes \nil_n \to \nil_{m+n}.$ There is also a differential $\del$, which is induced by requiring that $\del \sigma_i = 1$ for all $i$.

With the 2-algebra $\nil$ being associated to the circle, we can construct invariants for surfaces with circle boundary. Indeed, to a compact, connected, oriented surface $F$ with oriented boundary $S^1$, equipped with a suitable handle decomposition, one can associate a differential graded algebra-module $\T(F)$ over $\nil$. Here, by an algebra-module we mean a left module over $\nil$ with respect to the multiplication $\cdot$, such that $\T(F)$ also admits an algebra structure with respect to an operation $*$. If the oriented boundary of $F$ is $-S^1,$ one can similarly associate to $F$ an algebra-module $\B(F)$, which is a right module over $\nil$ with respect to the $\cdot$ multiplication. Furthermore, an algebra-module of $\T$-type and an algebra-module of $\B$-type can be tensored together with respect to the multiplication $\cdot$ to obtain a differential graded algebra---the algebra structure is induced by the leftover $*$ multiplication. The symbol we use for this kind of tensoring is $\odot.$ 

We then have the following decomposition result for the matching algebra $\alg(F)$, that is for the bordered Floer homology of a surface:

\begin {theorem}
\label {thm:main1}
Suppose we have two compact, connected, oriented surfaces $F_1$ and $F_2$ (equipped with suitable handle decompositions), with $\del F_1 = -\del F_2 = S^1.$ We can glue them to form a closed surface $F=F_1 \cup F_2.$ The matching algebra of $F$ then be recovered as a tensor product over the nilCoxeter sequential 2-algebra:
$$ \alg(F) \cong \T(F_1) \odot_{\nil} \B(F_2).$$
\end {theorem}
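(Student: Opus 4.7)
The plan is to establish the isomorphism generator-by-generator using the strand-diagram description of $\alg(F)$ from the Lipshitz–Ozsv\'ath–Thurston framework. First, I would fix the induced handle decompositions of $F_1$ and $F_2$ coming from the given decomposition of $F$, so that each handle of $F$ lies entirely in $F_1$, entirely in $F_2$, or crosses the separating circle $S^1$ in a controlled way. With these decompositions in hand, an element of $\alg(F)$ is described by a strand diagram on the pointed matched circle for $F$; I would verify that $\T(F_1)$ and $\B(F_2)$ admit analogous descriptions as spaces of strand diagrams living in the half-surfaces, with boundary data recorded by sequences of generators $\sigma_i$ in $\nil_m$.

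Next, I would define the candidate isomorphism
$$\Phi : \alg(F) \longrightarrow \T(F_1) \odot_{\nil} \B(F_2)$$
by cutting each strand diagram for $F$ along the separating circle $S^1$. Any such diagram decomposes into a strand diagram in $F_1$, a strand diagram in $F_2$, and a matching pattern of strands along $S^1$; the matching pattern is exactly the data of a nilCoxeter word $\sigma_{i_1} \cdots \sigma_{i_k} \in \nil_m$, where $m$ is determined by the number of strands crossing $S^1$. The well-definedness of $\Phi$ as a map into the tensor product over $\nil$ (rather than over $\FF$) comes from the fact that two ways of assigning the boundary data to $F_1$ versus $F_2$ differ by acting by some $\nu \in \nil$ on one side and $\nu^{-1}$-style reabsorption on the other; this is precisely what the relations in $\nil_m$ encode.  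I would then define an inverse by concatenating strand diagrams along matched boundary strands, using the nilCoxeter word to prescribe the crossing pattern in a neighborhood of $S^1$.

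Once $\Phi$ is a bijection of generating sets, I would verify compatibility with the three pieces of structure. Compatibility with the horizontal multiplication $\cdot$ on $\alg(F)$ follows because vertical concatenation of strand diagrams commutes with cutting along $S^1$ (both sides produce concatenations in the $\cdot$ multiplication on $\T(F_1)$, on $\B(F_2)$, and on $\nil$, and the $\nil$-multiplications match by Relations~(\ref{eq:nc2})--(\ref{eq:nc3}) combined with $\sigma_i^2 = 0$ accounting for forbidden double crossings). Compatibility with the leftover $*$ multiplication is similar, since disjoint-union/stacking of handle decompositions is respected by the slicing.

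The main obstacle, as I see it, is compatibility with the differential. The differential on $\alg(F)$ resolves crossings in strand diagrams, and a crossing that sits in a neighborhood of $S^1$ does not obviously lie in $F_1$ or $F_2$. Here one uses that $\del \sigma_i = 1$ in $\nil$: the resolution of a crossing straddling the cut contributes a term in which that crossing has been absorbed into the boundary datum, i.e.\ a $\sigma_i$ has been differentiated to $1$ on the $\nil$-side of the tensor product, and the two halves of the diagram have been reglued without that crossing. Crossings strictly inside $F_1$ or $F_2$ contribute via the internal differentials of $\T(F_1)$ and $\B(F_2)$, respectively. I would argue by a careful accounting that every term in $\del$ on $\alg(F)$ appears exactly once on the right-hand side under $\Phi$, using the Leibniz rule for the tensor product over the dg $2$-algebra $\nil$ and the specific form $\del \sigma_i = 1$. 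This bookkeeping, together with checking that no new terms are introduced, is the technical heart of the argument.
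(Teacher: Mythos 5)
Your core idea---cut a strand diagram for $\alg(F)$ along $S^1$, record the interface matching as a nilCoxeter element, and use the tensor over $\nil$ to absorb the ambiguity in where to place crossings near the cut---is the same idea the paper uses. However, the paper organizes the argument differently: it first proves the analogue for the \emph{strands algebra}, $\alg(N+N') \cong \T(N) \odot_{\nil} \B(N')$ (Theorem~\ref{thm:bnt}), where the cutting argument lives most naturally, and then observes that the conditions cutting out the matching algebra $\alg(\zed)\subset\alg(N)$ and the algebra-modules $\T(\zed_1)\subset\T(N_1)$, $\B(\zed_2)\subset\B(N_2)$ are each local to one side of the interface, so the strands-algebra isomorphism restricts. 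Your proposal tries to run the cut-and-reassemble argument directly on $\alg(F)$, which is possible but requires confronting some subtleties that the factored approach sidesteps.

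The most significant of these is a genuine gap in your write-up: a basis element of the matching algebra $\alg(\zed)$ is not a single strand diagram but a \emph{sum} of strand diagrams, one for each iterated horizontal-chord swap over the matching. You write ``an element of $\alg(F)$ is described by a strand diagram,'' which glosses over this. For the cutting map $\Phi$ to be well defined on such a basis element, the sum must decompose coherently across the cut. This does work---because each matched pair of points lies entirely inside $\zed_1$ or entirely inside $\zed_2$, the section-sums factor as a product of a $\zed_1$-sum and a $\zed_2$-sum, and these are exactly the basis elements of $\T(\zed_1)$ and $\B(\zed_2)$---but it must be said, and it fails if the matched-interval structure is not respected. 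Related to this, your allowance that handles might ``cross the separating circle $S^1$ in a controlled way'' is incompatible with the setup: in a matched-interval decomposition no $\alpha$-arc crosses the separating circle, and this is precisely what guarantees the factoring of the section-sums.

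A smaller confusion: you refer to both a ``horizontal multiplication $\cdot$'' and a ``leftover $*$ multiplication'' on $\alg(F)$, but $\alg(F)$ carries only one multiplication (the $*$, i.e.\ horizontal concatenation of strands). The $\cdot$ lives on $\T(F_1)$, $\B(F_2)$, and $\nil$ as the structure you tensor over; what remains on $\T(F_1)\odot_{\nil}\B(F_2)$ is the induced $*$, which is what must be identified with the multiplication on $\alg(F)$.

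Finally, the differential-compatibility discussion is morally correct, but the mechanism is simpler than you suggest: crossings between strands that both cross the cut sit in the $\nil$ factor and are resolved by $\partial\sigma_i=1$; crossings involving at least one strand that stays on one side sit entirely in $\T(F_1)$ or in $\B(F_2)$; and the freedom to slide a near-interface crossing from $\T$ into $\nil$ or from $\nil$ into $\B$ is exactly the relation defining $\odot_{\nil}$, which the Leibniz rule respects.
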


\nid Theorem~\ref{thm:main1} indicates why $\nil$ should be associated to a circle in a local Heegaard Floer theory. It also indicates the form of the invariants of 2-dimensional manifolds with connected boundary.

With respect to $3$-manifolds with codimension-two corners, our expectation is that their invariants take the form of certain $\alg_{\infty}$-2-modules over the algebra-modules associated to their boundary components (which are surfaces with boundary). The goal of the second part of the paper is to present some evidence for this expectation, in a simplified model where many of the topological, analytical, and algebraic complications from bordered Floer homology disappear. 

The model we have in mind is that of planar grid diagrams. These have been previously used by Lipshitz, Ozsv\'ath, and Thurston in the study of bordered Floer homology \cite{LOTplanar}. Planar grid diagrams are a simplification of the toroidal grid diagrams for links in $S^3.$ The toroidal grid diagrams have played an important role in the development of combinatorial techniques in Heegaard Floer theory \cite{MOS, MOST, MO, MOT}. Whereas toroidal grid diagrams can be thought of as a particular kind of multi-pointed Heegaard diagrams  for link complements, the planar ones are not true Heegaard diagrams and, in fact, the Floer complexes built from them do not produce topological invariants. Nevertheless, these planar Floer complexes share many properties with the Floer complexes coming from Heegaard diagrams,  and are therefore a good model in which one can isolate some of the relevant algebra from the topology and analysis. 

A planar grid diagram is simply an $N$-by-$N$ grid in the plane, with one $O$ marking and one $X$ marking in each row and in each column. An example is shown in Figure~\ref{fig:dada}. Starting from such a diagram $\H$, one can construct a planar Floer complex $CP^-(\H)$. Let us cut the diagram $\H$ into four quadrants, using a vertical line $\ell$ (at distance $k+3/4$ from the left edge, so that $2k$ markings are to its left), and a horizontal line $\ell'$ (at distance $k'+3/4$ from the bottom edge). We denote the four quadrants by $\haa, \had, \hda$ and $\hdd,$ as in Figure~\ref{fig:dada}.

\begin{figure}
\begin{center}
\input{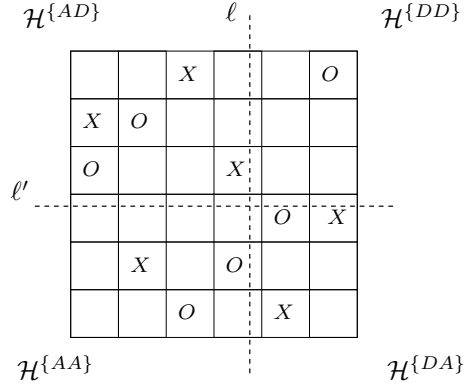}
\end{center}
\caption {{\bf Slicing a planar grid diagram twice.} Here $N=7, k=4$ and $k' =3.$}
\label{fig:dada}
\end{figure}

In \cite{LOTplanar}, only the vertical cut was considered. Set $\ha = \haa \cup \had$ and $\hd = \hda \cup \hdd.$ To the interface $\ell$, Lipshitz, Ozsv\'ath and Thurston associate a dg-algebra $\alg(N),$ called the {\em strands algebra}, which is a simpler analogue of the matching algebra $\alg(F).$ To the two sides $\ha$ and $\hd$ they  associate dg-modules $CPA^-(\ha)$ and $CPD^-(\hd)$, and then they prove a pairing theorem of the form:
\begin {equation}
\label {eq:pg1}
 CP^-(\H) \cong CPA^-(\ha) \otimes_{\alg(N)} CPD^-(\hd).
 \end {equation}

One should think of the planar grid as an analogue of a closed 3-manifold, of the interface $\ell$ as the analogue of a closed surface, and of $\ha$ and $\hd$ as analogues of 3-manifolds with boundary, intersecting along the surface. Then \eqref{eq:pg1} is the analogue of the pairing theorem for bordered Floer homology proved in \cite{LOT}.

Let us now consider the horizontal slicing too, and continue the analogy: the intersection point between $\ell$ and $\ell'$ corresponds to the circle, each of the two parts of $\ell$ (or $\ell'$) correspond to surfaces with boundary, and the four quadrants correspond to $3$-manifolds with a codimension-two corner. In the planar grid setting, we can then fully understand a decomposition of $CP^-(\H)$ into four parts, according to the quadrants. To the intersection point we still associate the nilCoxeter sequential 2-algebra $\nil$.  To the two parts of $\ell'$ we will associate algebra-modules over $\nil,$ denoted $\R(k)$ and $\L(N-k)$, respectively. Similarly, to the two parts of $\ell$ we associate algebra-modules $\T(k')$ and $\B(N-k').$ These last two bimodules satisfy a relation:
$$ \alg(N) \cong \T(k') \odot_{\nil} \B(N-k'),$$
similar to the one in Theorem~\ref{thm:main1}. To each quadrant we will associate a certain algebraic object called a sequential 2-module (over two of the four algebra-modules, depending on the quadrant). We denote these 2-modules by $\cpaah, \cpadh, \cpdah,$ and $\cpddh$. The 2-modules can be tensored together vertically (in which case we use the symbol $\odot$) or horizontally (in which case we use the symbol $\oast$). 

\begin {theorem}
\label {thm:main2}
Let $\H^A = \haa \cup \had \cup \hda \cup \hdd$ be a planar grid diagram of size $N$, doubly sliced into four quadrants, as above. There are isomorphisms of differential graded modules over $\alg(N) \cong \T(k') \odot_{\nil} \B(N-k')$: 
\begin {equation}
\label {eq:cpaha0}
CPA^-(\H^A) \cong \bigl(\cpaah\bigr) \odot_{\R(k)} \big(\cpadh \bigr),
\end {equation}
and
\begin {equation}
\label {eq:cpd_deco0}
CPD^-(\H^D) \cong \bigl( \cpdah \bigr) \odot_{\L(N-k)} \bigl( \cpddh \bigr).
\end {equation}
\end {theorem}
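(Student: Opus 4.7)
The plan is to prove both isomorphisms by a direct comparison of generators and differentials, mirroring the proof of \eqref{eq:pg1} in \cite{LOTplanar} but accounting for the additional horizontal slicing along $\ell'$. I would focus on \eqref{eq:cpaha0}; the second isomorphism \eqref{eq:cpd_deco0} then follows by an entirely symmetric argument, with A- and D-type structures interchanged and $\R(k)$ replaced by $\L(N-k)$.

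First I would establish a bijection of the underlying generating sets. A generator of $CPA^-(\ha)$ is a partial matching of rows and columns in the left half $\ha$, carrying an idempotent decoration on the vertical slicing line $\ell$. Restricting this data to the two quadrants $\haa$ and $\had$ produces a generator of $\cpaah$ together with a generator of $\cpadh$, plus a coupling decoration on the left portion of $\ell'$ that records how the two restrictions are joined. By construction the $\R(k)$-module structures on $\cpaah$ and $\cpadh$ index exactly this coupling, so that the tensor product $\odot_{\R(k)}$ glues matched pairs and recovers the full generating set of $CPA^-(\ha)$.

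Next I would match the differentials. The differential on $CPA^-(\ha)$ is computed from empty rectangles contained in $\ha$ (for the internal differential) together with half-rectangles that extend across $\ell$ (for the $\alg(N)$-action). Each such rectangle is then further classified by its relation to $\ell'$. A rectangle contained entirely in $\haa$ or $\had$ contributes to the internal differential of $\cpaah$ or $\cpadh$ respectively. A rectangle in $\ha$ that straddles $\ell'$ splits into upper and lower partial rectangles meeting in a common strand pattern on $\ell'$; this paired configuration corresponds precisely to simultaneous $\R(k)$-actions on the two sides, as encoded by $\odot_{\R(k)}$. Half-rectangles extending across $\ell$ and lying entirely above or below $\ell'$ contribute to the $\B(N-k')$- or $\T(k')$-part of the $\alg(N)$-action under the decomposition $\alg(N) \cong \T(k') \odot_{\nil} \B(N-k')$, while those that straddle both slicings decompose via the $\cdot$-multiplication combined with an $\R(k)$-action.

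The principal technical obstacle is the combinatorial bookkeeping for configurations that simultaneously engage both slicings. Several disjoint horizontal strips crossing $\ell'$ at distinct heights must be combined via the $*$-multiplication of $\nil$, while a single tall rectangle split by $\ell'$ corresponds to the $\cdot$-multiplication; the differential relation $\del \sigma_i = 1$ of $\nil$ must correctly absorb boundary degenerations in which such a strip collapses to a point. Verifying that the sequential 2-module structure on $\cpaah$ and $\cpadh$ reproduces the rectangle counts exactly---including the higher-order interactions between $\R(k)$-actions and $\alg(N)$-actions---is the heart of the argument. Once this compatibility is in place, the differentials agree term by term and \eqref{eq:cpaha0} follows; the proof of \eqref{eq:cpd_deco0} is formally identical after the A/D exchange and the replacement of $\R(k)$ by $\L(N-k)$.
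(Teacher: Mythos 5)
Your proposal follows essentially the same strategy as the paper's proofs of Theorems~\ref{thm:cpa} and \ref{thm:cpd}: identify generators on both sides of each isomorphism, then match the differentials and the $\alg(N)$-action by a case analysis on how each rectangle or half-strip sits relative to the slicing lines $\ell$ and $\ell'$. The piece you flag as ``the heart of the argument'' but leave implicit---that a chord $\rho_{i,j}$ crossing $\ell'$ factors as $\mu_i \odot \nu_j$ and is recovered through quarter-strips mediated by the caps $\xi_l$ in $\R(k)$ (respectively cups $\zeta_l$ in $\L(N-k)$)---is exactly the content of Equations~\eqref{eq:recoverA} and~\eqref{eq:recoverD} in the paper.
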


We can combine Equations~\eqref{eq:pg1}, \eqref{eq:cpaha0} and \eqref{eq:cpd_deco0} into a single equation, which expresses the whole planar Floer complex as a double tensor product (combining $\odot$ and $\oast$) of the four 2-modules:
$$
CP^-(\H) \cong \odot \oast \bigr(\cpaah, \cpadh, \cpdah, \cpddh \bigl).
$$

It is worth noting that in the planar grid setting, all the objects (algebras, algebra-modules, and 2-modules) come equipped with a differential, but there is no need to consider higher multiplication (that is, $\alg_\infty$) structures. In bordered Floer homology, $\alg_\infty$-structures are unavoidable, and tensor products have to be replaced by derived tensor products. Thus, in a local Heegaard Floer theory, we expect that  to a $3$-manifold with a codimension-two corner we would associate a 2-module with an $\alg_\infty$-structure, and that we would have pairing results analogous to Theorem~\ref{thm:main2}, but involving derived tensor products.

As noted in \cite[Section 8]{LOT}, in bordered Floer homology one case in which the higher $\alg_\infty$-operations vanish is when we work with diagrams that are nice in the sense of Sarkar and Wang \cite{SarkarWang}. Furthermore, in nice diagrams all holomorphic disks can be counted combinatorially, so the analytical difficulties are absent as well. One can define nice diagrams for $3$-manifolds with a codimension-two corner in a similar way. The constructions done for planar grids in this paper should extend to nice diagrams: to any nice cornered diagram one would associate a corresponding 2-module, such that a pairing result is satisfied.  However, the setting of nice diagrams is not entirely satisfactory for developing a theory of cornered Floer homology.  In particular, in the context of nice diagrams it may require elaborate measures to prove that the respective 2-modules are invariants of cornered 3-manifolds; compare \cite{OSSnice}.

\medskip \noindent \textbf {Outline of the paper.} Section~\ref{sec:seq} contains the definitions of the algebraic objects we work with: sequential 2-algebras, algebra-modules, and 2-modules. We also introduce the nilCoxeter sequential 2-algebra and a graphical interpretation of it. Section~\ref{sec:LOT} describes the strands algebra $\alg(N),$ and includes a proof of a decomposition result for it, as a warm-up for Theorem~\ref{thm:main1}. In Section~\ref{sec:bchd} we introduce various diagrammatic presentations (similar to Heegaard diagrams) for surfaces with boundary and for 3-manifolds with codimension two corners. Section~\ref{sec:match} is then devoted to proving Theorem~\ref{thm:main1}, the decomposition result for the matching algebra $\alg(F)$. The rest of the paper focuses on planar grid diagrams. In Section~\ref{sec:vslice} we review the results of \cite{LOTplanar} on the vertical slicing of the planar Floer complex. In Section~\ref{sec:cpa} we define the 2-modules $\cpaa$ and $\cpad$ and prove the first half of Theorem~\ref{thm:main2}, that is, Equation~\eqref{eq:cpaha0}. Lastly, in Section~\ref{sec:cpd} we define the 2-modules $\cpda$ and $\cpdd$ and prove the other half of Theorem~\ref{thm:main2}, that is, Equation~\eqref{eq:cpd_deco0}.

\medskip \noindent  \textbf {Acknowledgments.} We would like to thank Denis Auroux, Aaron Lauda, Robert Lipshitz, Peter Ozsv\'ath for useful conversations and encouragement, and Mohammed Abouzaid and Tim Perutz for a helpful discussion that led to Section~\ref{sec:nilcoxgrid}. The second author is grateful to David Nadler for asking him the question ``What is the Seiberg-Witten invariant of the circle?'', which inspired the research in this paper.

\nopagebreak The first author was partially supported by a Miller Research Fellowship. The second author was partially supported by NSF grant number DMS-0852439 and the Mathematical Sciences Research Institute.

\section {Sequential 2-algebras, algebra-modules, and 2-modules}
\label {sec:seq}

\subsection{Sequential 2-algebras}
\label {sec:sdga}

Fix a ground field $\k$ of characteristic two. Our algebras and modules will be defined over the field $\k.$

In bordered Heegaard Floer homology \cite{LOT, LOTplanar, LOTbimodules}, one associates to a closed surface $F$ a differential graded algebra $\alg(F).$ By the general philosophy of local TQFT's (see \cite{BDH} and \cite[Section 7]{FHLT}), we expect that to a 1-manifold we associate a kind of differential graded 2-algebra, that is, a vector space $\A$ with two multiplications ($\cdot$ and $*$) and one differential. It is natural to require the multiplications to have units and to commute in the following sense:
\begin {equation}
\label {eq:commute}
 (a*b) \cdot (c* d) = (a \cdot c) *(b \cdot d),
 \end {equation}
for all $a, b, c, d \in \A.$ However, this requirement is too strong, because it automatically implies that our structures are commutative:

\begin {lemma}[Eckmann-Hilton \cite{EH}]
Let $\A$ be a $\k$-vector space with two unital algebra structures $(\A, \cdot , e) $ and $(\A, *, u).$ Suppose that \eqref{eq:commute} is satisfied. Then the two algebra multiplications coincide and are commutative.  
\end {lemma}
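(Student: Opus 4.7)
The plan is to run the classical Eckmann--Hilton argument, in three short steps, each obtained by a single judicious application of the interchange identity \eqref{eq:commute}. The only input is that $e$ is a two-sided unit for $\cdot$, $u$ is a two-sided unit for $*$, and the two operations satisfy \eqref{eq:commute}.

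First I would show the two units coincide. Apply \eqref{eq:commute} with $a = e$, $b = u$, $c = u$, $d = e$. On the left, $e * u = e$ and $u * e = e$ (since $u$ is a unit for $*$), so the left-hand side simplifies to $e \cdot e = e$. On the right, $e \cdot u = u$ and $u \cdot e = u$ (since $e$ is a unit for $\cdot$), so the right-hand side is $u * u = u$. Hence $e = u$; write $1$ for this common unit.

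Next I would show the operations agree. For arbitrary $a, b \in \A$, apply \eqref{eq:commute} to $a, 1, 1, b$:
\[
a \cdot b \;=\; (a * 1) \cdot (1 * b) \;=\; (a \cdot 1) * (1 \cdot b) \;=\; a * b.
\]
Thus $\cdot$ and $*$ coincide as operations.

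Finally, to get commutativity, I would apply \eqref{eq:commute} one more time, this time to $1, a, b, 1$:
\[
a \cdot b \;=\; (1 * a) \cdot (b * 1) \;=\; (1 \cdot b) * (a \cdot 1) \;=\; b * a \;=\; b \cdot a,
\]
using the equality of operations proved in the previous step. There is really no obstacle here; the only subtlety is the unit-matching trick in step one, where one must carefully pick which unit feeds into which slot of the interchange law so that one side collapses via the $*$-unit and the other via the $\cdot$-unit.
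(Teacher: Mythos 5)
Your proof is correct and is essentially the same argument the paper gives: identify the units by applying the interchange law to the units, then use the interchange law twice more (once to identify the operations, once to establish commutativity). The only difference is presentational.
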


\begin {proof}
We have
$$ e = e \cdot e = (e * u) \cdot (u * e) = (e \cdot u) * (u \cdot e) = u * u = u,$$
so the two units are the same. Therefore, 
$$ a \cdot b =  (a * e) \cdot (e * b) = (a \cdot e) * (e \cdot b) =  a * b = ( e \cdot a) * (b \cdot e) = (e * b) \cdot (a * e) = b \cdot a,$$ 
for any $a, b \in \A.$
\end {proof}

In the literature on local TQFT's, the hypotheses are sometimes weakened by allowing one of the multiplications to be non-unital, and even non-associative \cite{BDH, FHLT}. Here we take a different view, by weakening the commutativity requirement.

\begin {definition}
\label {def:dg2a}
A (differential graded) {\em sequential 2-algebra} $\A = \{\A_m| m \geq 0\}$ is a sequence of differential graded algebras $(\A_m, \cdot, \del)$, together with dg-algebra maps\footnote{Throughout this paper, the symbol $\otimes$ will denote  tensor product over the ground field $\k$ (or, from Section~\ref{sec:vslice} on, over a certain ground ring $\kk$). This is to be contrasted with the tensor products $\odot$ and $\oast$, to be defined later, which are usually taken over larger algebras.}   
$$ \mu_{m, n} : \A_m \otimes \A_n \longrightarrow \A_{m+n},  \ \ \mu_{m,n}(a \otimes b) =: a * b, $$
for all $m, n \geq 0,$ satisfying the associativity condition:
$$ (a * b) * c = a * (b * c),$$
for any $a \in \A_m, b \in \A_n, c \in \A_p,$ with $m, n, p \geq 0.$

The sequential 2-algebra $\A$ is called {\em unital} if each algebra $(\A_m, \cdot, \del)$ has a multiplicative unit $e_m$ such that
$$ e_m * e_n = e_{m+n}, \ \text{ for all } m, n \geq 0,$$
and
$$ e_0 * a = a * e_0 = a, \ \text{ for any } a \in \A_m, \ m \geq 0.$$
  
\end {definition}

If $\A$ is a sequential 2-algebra, the infinite product $\ol \A = \prod_m \A_m$ admits two multiplications $\cdot$ and $*$ defined as follows. If $a = (a_m)_{m \geq 0}$ and $b=(b_m)_{m \geq 0}$ are two sequences in $\ol \A,$ we let the components of their products be
$$ (a \cdot b)_m =a_m \cdot b_m$$
and
$$ (a * b)_m= \sum_{i+j =m} a_i * b_j.$$

Both of these algebra structures on $\ol \A$ are associative and unital, but with different units: $e = (e_0, e_1, e_2, \dots)$ for $\cdot$, and $u = (e_0, 0, 0, \dots)$ for $*.$ The two multiplications commute in a local sense: if $a, c \in \A_m$ and $b, d \in \A_n$ for some $m, n \geq 0,$ then \eqref{eq:commute} holds.

The Eckmann-Hilton Lemma does not apply here because the multiplications do not commute in general. Indeed, for $a = (a_m)_{m \geq 0} , b = (b_m)_{m \geq 0}, c=(c_m)_{m \geq 0}, d=(d_m)_{m \geq 0},$ we have
$$ \bigl( (a * b) \cdot (c * d) \bigr)_m = \sum_{\substack{i+j=m \\ k+l = m}} (a_i * b_j) \cdot (c_k * d_l) ,$$ 
whereas
$$  \bigl( (a \cdot c) * (b \cdot d) \bigr)_m = \sum_{i+j=m} (a_i * b_j) \cdot (c_i * d_j).$$

\begin {remark}
When talking about differential graded objects, we mean $\zz$-graded unless otherwise noted. (In Sections~\ref{sec:ncgrading} and \ref{sec:match}, we will consider gradings by noncommutative groups, and in Sections~\ref{sec:vslice}, \ref{sec:cpa}, and \ref{sec:cpd} the gradings will generally be by $\zz \times \zz$.) For now, in Definition~\ref{def:dg2a}, each $\A_m$ has a $\zz$-grading. {\em We emphasize that the subscript $m$ in $\A_m$ has no relation to the grading.} For simplicity, we typically suppress the grading from the notation. When we wish to explicitly mention an algebra summand in grading $k$, we use the subscript $\langle k \rangle,$ as in for example $\A_{m, \langle k \rangle}$. Our convention is that the differentials decrease the grading by one. 
\end {remark}

\begin {remark}
If we ignore the differential and the grading, a sequential 2-algebra can be viewed as a particular example of a (strict) 2-category. Precisely, it is a 2-category with a single object (or in other words, a monoidal category), with 1-morphisms from the object to itself indexed by the set $\zz_{\geq 0}$ of nonnegative integers, and with 2-morphisms from $m$ to $n$ existing only when $m=n$. If $m=n,$ the respective set of 2-morphisms is $\A_m.$
\end {remark}

\subsection {An example: the nilCoxeter sequential 2-algebra}
\label {sec:nilcox2a}

Let $m \geq 0.$ The {\em nilCoxeter algebra} $\nil_m$ is defined as the unital $\k$-algebra generated by elements $\sigma_i, \ i=1, \dots, m-1,$ subject to the relations \eqref{eq:nc1}-\eqref{eq:nc3}. The nilCoxeter algebra was first considered by Bernstein, Gelfand and Gelfand \cite{BGG}, and its name is due to Fomin and Stanley \cite{FominStanley}. Its role in categorification was pointed out by Khovanov \cite{KhNilCox, Khovanov12}; see also the remarks in \cite{LOT} and \cite{LOTplanar} about its relevance to bordered Floer homology. 

In the discussion below we will use a few standard facts about the symmetric group and the Bruhat order; for their proofs, see for example \cite{Bourbaki14} or \cite{BjornerBrenti}. See also \cite{FominStanley} for more details about the nilCoxeter algebra.

The nilCoxeter algebra is closely related to the symmetric group $S_m$. Indeed, the group algebra $\k[S_m]$ is generated by elements $s_i, \ i=1, \dots, m-1,$ subject to the same relations as the $\sigma_i$'s except we have to change the relation $\sigma_i^2 = 0$ to $s_i^2=1.$ As a $\k$-vector space, $\nil_m$ has $m!$ generators, namely
\begin{equation}
\label{eq:sigmas}
 (\sigma_{i_1}\sigma_{i_1 -1} \dots \sigma_{i_1 - k_1})(\sigma_{i_2} \sigma_{i_2 -1}\dots \sigma_{i_2-k_2}) \dots (\sigma_{i_p} \sigma_{i_p-1} \dots \sigma_{i_p-k_p}),
\end {equation}
for all possible $p \geq 0, \ 1 \leq i_1 < i_2 < \dots < i_p \leq m-1,$ and $0 \leq k_j < i_j \ (j=1, \dots, p).$ If we replace the $\sigma_i$'s by $s_i$'s, the same expressions can be used to uniquely describe the elements of $S_m$. This sets up a bijection between $S_m$ and a set of generators of $\nil_m.$ For $w \in S_m,$ we denote by $\sigma_w \in \nil_m$ the corresponding generator.  

In terms of this additive basis, the multiplication in $\nil_m$ is given by
$$ \sigma_w \sigma_{w'} = \begin{cases} \sigma_{ww'} & \text{if } \ell(ww') = \ell(w) + \ell(w'), \\ 0 & \text{otherwise,} \end {cases}$$
where $\ell(w)$ is the standard length (i.e., the number of inversions, or the minimal number of generators) of the permutation $w$, i.e. $\ell(w) = (k_1 + \dots + k_p) + p$ in terms of \eqref{eq:sigmas}.

We define a differential on $\nil_m$ by setting $\del \sigma_i =1,$ and then extending it by the Leibniz rule. Let us write $w' \prec w$ is $w'$ if an immediate predecessor to $w$ in the strong Bruhat order on $S_m;$ that is, if there exist $1\leq i < j \leq m$ such that $w'(i) = w(j) <w(i) =w'(j), \ w'(k) = w(k)$ for all $k \neq i,j$, and $\ell(w') = \ell(w) - 1.$ We can then write
$$ \del \sigma_w = \sum_{w' \prec w  } \sigma_{w'}. $$
\nid Note that the length $\ell$ gives a grading on $\nil_m,$ making $\nil_m$ into a differential graded algebra. 

\begin {remark}
As a chain complex, $\nil_m$ can be viewed as the cellular chain complex of the CW-space constructed by Bj\"orner  in \cite{Bjorner} (corresponding to the Weyl group $S_m$).
\end {remark} 

We define the {\em nilCoxeter sequential 2-algebra} $\nil$ to be composed of the pieces $\nil_m$, with the second product $* : \nil_m \otimes \nil_n \to \nil_{m+n}$ given by concatenation $\sigma_i \otimes \sigma_j \to \sigma_i \sigma_{m+j}.$ Observe that $\nil$ is unital.

We can represent the sequential algebra $\nil$ graphically as follows. A generator of $\nil_m$ is a box with $m$ strands going up, from the bottom edge to the top edge, see Figure~\ref{fig:nilcox1}. If we number the initial points on the bottom edge by $1, 2, \dots, m$ (from left to right), and we do the same with the final points on the top edge, then the strands specify a permutation in $S_m$ or, equivalently, an element of $\nil_m.$ In particular, $\sigma_i$ corresponds to an interchange between the $i\th$ and the $(i+1)\st$ strands. The algebra multiplication $\cdot$ is given by stacking pictures vertically (with the convention that $a \cdot b$ means $b$ is on top of $a$), and the second multiplication $a * b$ is stacking pictures horizontally (such that $a * b$ means $a$ is to the left of $b$). The defining relations of the nilCoxeter algebra are shown graphically by the local pictures from Figure~\ref{fig:nilcoxrel}. The grading on $\nil_m$ is given by counting the number of crossings in the diagram of a generator, so we denote it by 
\begin {equation}
\label {eq:cr}
\cro: \nil_m \to \zz, \ \ \ \cro(\sigma_w) = \ell(w).
\end {equation}

\begin{figure}
\begin{center}
\input{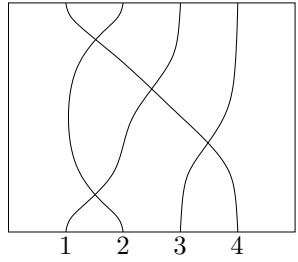}
\end{center}
\captionsetup{singlelinecheck=off}
\caption[.] {{\bf A generator of the nilCoxeter sequential 2-algebra.} We show here the element 
$\sigma_1 \sigma_3 \sigma_2 \sigma_1$ in $\nil_4 \subset \nil,$ corresponding to the permutation 
$w=\bigl ( \begin{smallmatrix}1& 2& 3 & 4 \\ 3 & 2 & 4 & 1\end{smallmatrix} \bigr)$. Note that the figure is read from bottom to top.
}
\label{fig:nilcox1}
\end{figure} 

\begin{figure}
\begin{center}
\input{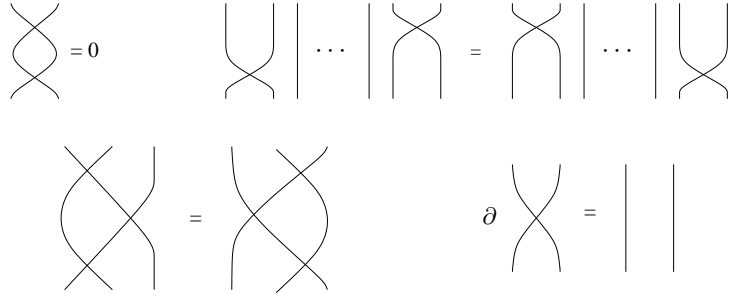}
\end{center}
\caption {{\bf Relations in the nilCoxeter algebra.} 
}
\label{fig:nilcoxrel}
\end{figure}

\subsection {Grid diagram interpretation}
\label {sec:nilcoxgrid}

Another pictorial representation of $\nil_m$ is by an empty planar grid diagram, reminiscent of \cite{MOS, LOTplanar}, and especially \cite{AurouxBordered}. Indeed, in \cite{AurouxBordered}, Auroux describes the matching algebra $\mathcal{A}(F)$ of a surface $F$ in terms of the partially wrapped Fukaya category of a symmetric product of the surface. Along those lines, we conjecture that the nilCoxeter algebra $\nil_m$ is equivalent to a variant of the partially wrapped Fukaya category of the symmetric product $\Sym^m(\rr^2)$.  Further, the product $*$ on the 2-algebra $\nil$ should be induced on Fukaya categories by a concatenation map $$* : \Sym^m(\rr^2) \times \Sym^n(\rr^2) \to \Sym^{m+n}(\rr^2),$$ which puts Lagrangians side by side. Alternately (and more simply), one could 
compare $\nil_m$ to the Fukaya category associated to a Lefschetz fibration on $\Sym^m(\rr^2)$ with a single critical point, arising from a Lefschetz fibration on $\rr^2$ with $m$ critical points, as in \cite[Section 2]{AurouxBordered}.

For simplicity, in this section we do not discuss Fukaya categories, but rather give a concrete description of $\nil_m$ in terms of grid diagrams. This description is the basis for the conjectural interpretation mentioned above.

Consider $m$ horizontal lines in the plane $\alpha_1, \dots, \alpha_m$ (ordered from top to bottom), $n$ vertical lines $\beta_1, \dots, \beta_m$ (ordered from left to right), forming an $(m-1)$-by-$(m-1)$ grid. To each permutation $w \in S_m$ we can associate an $m$-tuple of intersection points 
$$\x = \x_w = \{ x_i \in \alpha_i \cap \beta_{w(i)} \}_{i=1}^m.$$ Let $\S(\alpha, \beta)$ be the set of such $m$-tuples. 

Given sets $E, F \subset \rr^2,$ let 
\begin {equation}
\label{eq:I}
\I(E, F) = \# \{(e=(e_1, e_2), f=(f_1, f_2)) \in E \times F \mid e_1 < f_1 \text{ and } e_2 < f_2 \}.
\end {equation}
\nid For $\x = \x_w \in \S(\alpha, \beta)$, we denote
$$ \mu(\x) := \I(\x, \x) = \ell(w).$$
(Recall that $\ell(w)$ is the number of crossings in the diagram of $\sigma_w$.)

 We form a complex $C(m)$ over $\k$ freely generated by the elements of $\S(\alpha, \beta),$ with each $\x$ in grading $\mu(\x),$ and with the differential
$$ \del \x = \sum_{\x' \in \S(\alpha, \beta)} \sum_{r \in \Rect(\x, \x')} \x'.$$
\nid Here, $\Rect(\x, \x')$ is the set of empty rectangles between $\x$ and $\x',$ as in \cite[p.3]{MOST}. More precisely, $\Rect(\x, \x')$ has either zero or one element. It can be nonempty only when the $\x$ and $\x'$ differ in exactly two rows and two columns, in which case $r \in \Rect(\x, \x')$ is the rectangle bounded by these rows and columns, with the lower-left vertex being part of $\x$ (rather than $\x'$). Further, we require that $r$ is ``empty'', in the sense that the interior of $r$ does not contain any of the points in $\x \cap \x'.$ See Figure~\ref{fig:nilcox2} for an example.

\begin{figure}
\begin{center}
\input{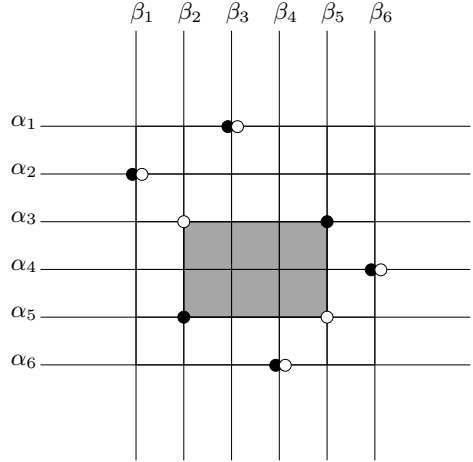}
\end{center}
\captionsetup{singlelinecheck=off}
\caption[.]{{\bf The nilCoxeter chain complex on a planar grid diagram.} The black dots form the generator $\x$ associated to the permutation $w=\bigl(\begin{smallmatrix} 1 & 2 & 3 & 4 & 5 & 6 \\ 3&1&5&6&2&4 \end{smallmatrix})$, and the white dots form the generator $\x'$ associated to $w' = \bigl( \begin{smallmatrix} 1 & 2 & 3 & 4 & 5 & 6 \\ 3&1&2&6&5&4 \end{smallmatrix} \bigr).$ The shaded rectangle shows that $\x'$ is a term in the differential of $\x.$
}
\label{fig:nilcox2}
\end{figure}

Now in addition to the horizontal $\alpha$ lines and the vertical $\beta$ lines, consider $m$ diagonal lines (of slope $-1/2$) $\gamma_1, \ldots, \gamma_m$ (ordered from left to right), such that no three lines intersect in one point. See Figure~\ref{fig:nilcox3}. We use the $\gamma$ curves to turn $C(m)$ into a differential graded algebra as follows. For $w \in S_m$, we let
\begin{figure}
\begin{center}
\input{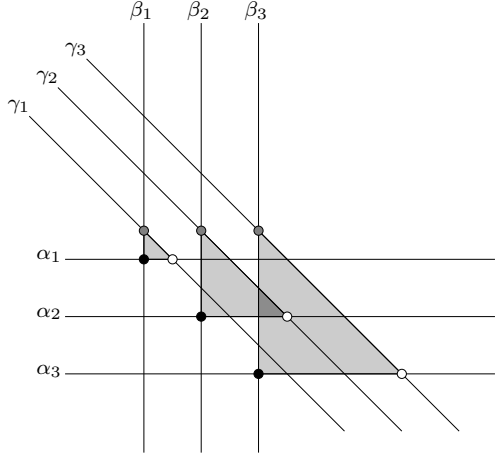}
\end{center}
\caption {{\bf The nilCoxeter multiplication in terms of grid diagrams.} The shaded objects form a triangle between $\x$, $\y$ and $\z$, where the generators $\x, \y$ and $\z$ all correspond to the identity permutation. The generators $\x, \y, \z$ are drawn as black, gray, and white dots, respectively. The darker shading represents a multiplicity two area (a head to tail overlap).
}
\label{fig:nilcox3}
\end{figure}
$$ \y_w = \{ y_i \in \beta_i \cap \gamma_{w(i)} \}_{i=1}^m, \ \ \ \  \z_w = \{ z_i \in \alpha_i \cap \gamma_{w(i)} \}_{i=1}^m.$$
\nid We let $\S(\beta, \gamma)$ be the set of $\y_w$'s, and $\S(\alpha, \gamma)$ the set of $\z_w$'s. Note that the complex $C(m)$ can be defined using $\S(\beta, \gamma)$ or $\S(\alpha, \gamma)$ instead of $\S(\alpha, \beta),$ and using empty parallelograms instead of empty rectangles.

For $\x \in \S(\alpha, \beta), \y \in \S(\beta, \gamma),$ and $\z \in \S(\alpha, \gamma),$ a {\em triangle} between $\x, \y $ and $\z$ is defined to be a collection of $m$ embedded triangles in $\rr^2$, each with boundaries on an $\alpha$, $\beta$ and a $\gamma$ curve (in this clockwise order), such that each of the components of $\x$, $\y$ and $\z$ appears once as a vertex of one of the $m$ triangles; further, we require that the $m$ triangles only intersect ``head to tail'' in the sense of \cite[Section 2.3]{LMW}-- compare also \cite{SarkarIndex}. See Figure~\ref{fig:nilcox3} for an example of an allowed intersection. We let $\Tri(\x, \y, \z)$ denote the set of triangles between $\x, \y$ and $\z.$ Note that $\Tri(\x, \y, \z)$ always has at most one element.

We define a multiplication $ \cdot : C(m) \otimes C(m) \to C(m)$ as follows. For $\x \in \S(\alpha, \beta)$ and $\y \in \S(\beta, \gamma),$ we let
$$ \x \cdot  \y = \sum_{\z \in S(\alpha, \gamma)} \sum_{r \in \Tri(\x, \y, \z)} \z.$$ 
\nid This product makes $C(m)$ into a differential graded algebra.

\begin {proposition}
The map taking a generator $\x_w$ to $\sigma_w$ (for all $w \in S_m$) induces an isomorphism between the differential graded algebras $C(m)$ and $\nil_m.$
\end {proposition}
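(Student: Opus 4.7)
The map $\x_w \mapsto \sigma_w$ is manifestly a bijection between the $\k$-bases of $C(m)$ and $\nil_m$, both indexed by $S_m$. The first step is to check that it preserves gradings: with $\alpha$-rows numbered top-to-bottom, a pair $(x_i, x_j) \in \x_w \times \x_w$ with $j < i$ has $x_j$ strictly higher than $x_i$, and $x_j$ is strictly to the right of $x_i$ iff $w(j) > w(i)$. Hence $\I(\x_w, \x_w)$ counts pairs $(j,i)$ with $j<i$ and $w(j) > w(i)$, which is the number of inversions of $w$, so $\mu(\x_w) = \ell(w) = \cro(\sigma_w)$.

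Next I would verify that the differentials agree. A nonempty $\Rect(\x_w, \x_{w'})$ requires $\x_w$ and $\x_{w'}$ to differ in exactly two rows $i<j$ and two columns $a<b$; the convention that the lower-left vertex of $r$ lies in $\x_w$ forces $w(j) = a$ and $w(i) = b$, so $(i,j)$ is an inversion of $w$ and $w' = w \tau_{ij}$. Emptiness of $r$ says that no $k$ with $i<k<j$ satisfies $a < w(k) < b$, which is the standard combinatorial criterion for $\ell(w\tau_{ij}) = \ell(w) - 1$, i.e.\ $w' \prec w$ in the strong Bruhat order. Since $r$ is unique when it exists, one obtains $\del_{C(m)} \x_w = \sum_{w' \prec w} \x_{w'}$, matching $\del \sigma_w$ in $\nil_m$.

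For the product $\x_w \cdot \y_{w'}$, fix a triangle system and trace the triangle incident to row $i$. Its $\alpha$-edge lies on $\alpha_i$ and its $\beta$-edge on $\beta_{w(i)}$, so its $\beta\gamma$-vertex is forced to be $y_{w(i)} \in \beta_{w(i)} \cap \gamma_{w'(w(i))}$, and its $\alpha\gamma$-vertex is the unique point $z_i \in \alpha_i \cap \gamma_{w'(w(i))}$. Hence the only conceivable output is $\z_u$ with $u = w' \circ w$, which corresponds to the nilCoxeter product $\sigma_w \sigma_{w'}$ under the strand-diagram convention. What remains is to check that this unique system of $m$ little triangles can be arranged to intersect only head-to-tail precisely when $\ell(u) = \ell(w) + \ell(w')$, and that the triangle count is then exactly one.

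\textbf{Main obstacle.} The crux is this last step: matching the head-to-tail condition to Bruhat length additivity. I plan to argue by induction on $\ell(w')$, with base case $w' = e$ trivial. For the inductive step I would write $w' = w'' s_k$ with lengths adding, and use an associativity argument (introducing a fourth family $\delta$ of lines and counting quadrilaterals, in the spirit of the associativity of the Heegaard triangle map) to reduce to the case $w' = s_k$ of a single simple transposition. In that reduced case, a direct geometric check using the explicit slopes of the $\alpha$, $\beta$, $\gamma$ lines shows that the only pair of little triangles that can fail to intersect head-to-tail is $(T_{w^{-1}(k)}, T_{w^{-1}(k+1)})$, and that they are compatible iff $w^{-1}(k) < w^{-1}(k+1)$, equivalently iff $\ell(w s_k) > \ell(w)$. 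This produces the single-generator nilCoxeter relations, and the induction then delivers the full multiplicativity, completing the dg-algebra isomorphism.
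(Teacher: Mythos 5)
Your verification of the grading ($\mu(\x_w)=\I(\x_w,\x_w)=\ell(w)$) and of the differential (matching empty rectangles to Bruhat covering relations) is correct and complete, and you have correctly pinned down $\z_u$ with $u=w'\circ w$ as the unique candidate output of the triangle product; this is consistent with $\sigma_w\sigma_{w'}=\sigma_{ww'}$ once one observes that the paper reads $ww'$ diagrammatically (first $w$, then $w'$). Since the paper's own proof is merely a citation to Auroux's Propositions 3.4 and 3.5, a self-contained argument is worthwhile. But, as you flag, the entire content of the proposition lives in the head-to-tail step, and your plan there is not yet a proof.

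Two concrete issues. First, the induction is predicated on \emph{strict} associativity of the triangle product over a fourth curve family $\delta$, which you have not established; a quadrilateral count a priori gives associativity only up to chain homotopy, and upgrading it to strict associativity in the grid setting is a claim of scope comparable to the proposition itself, not something to be waved off ``in the spirit of'' the Heegaard triangle map. Second, in the base case $w'=s_k$ you assert that only the pair $(T_{w^{-1}(k)},T_{w^{-1}(k+1)})$ can fail to be head-to-tail, but the remaining triangles $T_i$ with $w(i)\notin\{k,k+1\}$ are small yet \emph{nondegenerate} (the lines $\alpha_i$, $\beta_{w(i)}$, $\gamma_{w(i)}$ are in general position, not concurrent), so you must rule out overlaps with them; that is plausible but unargued. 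You should also be explicit that ``$w^{-1}(k)<w^{-1}(k+1)$ iff $\ell(ws_k)>\ell(w)$'' holds only under the diagrammatic group law $ww'=w'\circ w$ that the paper is implicitly using; under the usual right-action convention the relevant inequality would instead be $w(k)<w(k+1)$. A shorter route, bypassing the associativity detour entirely, is a direct comparison: show that the $m$ triangles fail the head-to-tail condition precisely when some pair $i<j$ is an inversion of $w$ while $(w(j),w(i))$ is an inversion of $w'$ (a double crossing in the stacked strand diagram), and observe that no such pair exists if and only if $\ell(w'w)=\ell(w)+\ell(w')$.
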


\nid The proof is the same as the proofs of \cite[Propositions 3.4 and 3.5]{AurouxBordered}.

The reader may wonder about the homology of the complex underlying $\nil_m.$ For $m=0$ or $1,$ we have $\nil_m = \k,$ with trivial differential. In general, we have the following result, which was independently proved by Khovanov (\cite[Lemma 4]{Khovanov12}):

\begin{lemma}
\label {lemma:hom}
The nilCoxeter complex $\nil_m$ is acyclic for $m \geq 2.$ 
\end {lemma}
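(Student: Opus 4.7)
The plan is to exhibit an explicit contracting chain homotopy, which is possible because the differential is defined so that $\del \sigma_i = 1$, giving us an element whose boundary is the unit of the dga.

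Concretely, assuming $m \geq 2$ we can form the element $\sigma_1 \in \nil_m$. Define a $\k$-linear map
\[
h : \nil_m \longrightarrow \nil_m, \qquad h(x) = \sigma_1 \cdot x,
\]
using the ordinary (internal) multiplication $\cdot$ of the dga $\nil_m$. I would then simply compute $\del h + h\del$ by applying the graded Leibniz rule and the identity $\del \sigma_1 = 1$:
\[
\del h(x) = \del(\sigma_1 \cdot x) = (\del \sigma_1) \cdot x + \sigma_1 \cdot \del x = x + \sigma_1 \cdot \del x.
\]
Adding $h(\del x) = \sigma_1 \cdot \del x$ and using that $\k$ has characteristic two, the two copies of $\sigma_1 \cdot \del x$ cancel, so
\[
(\del h + h \del)(x) = x.
\]
Hence $h$ is a chain contraction of the identity, and $\nil_m$ is acyclic as soon as $\sigma_1$ exists, i.e.\ for $m \geq 2$.

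There is essentially no obstacle: the only thing to check is that the chosen $h$ is well-defined, which requires $m \geq 2$ so that a generator $\sigma_i$ exists at all; the formula $\del \sigma_i = 1$ from the definition of the differential together with characteristic two does the rest. One could equally well take $h(x) = x \cdot \sigma_i$ for any fixed $i$; the proof is indifferent to the choice, which is consistent with the fact that the statement is about the underlying chain complex rather than its algebra structure.
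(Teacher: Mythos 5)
Your proof is correct, and it is genuinely different from and considerably slicker than the one in the paper. The paper proves acyclicity geometrically, following the style of the grid-diagram literature: it places $X$ markings in all small rectangles of the grid except those in the first row, filters the complex by the count of $X$ markings in differential rectangles, and then argues that in the associated graded complex the generators cancel in pairs via transpositions in the first row. Your argument is purely algebraic and avoids all of that. Since $\del$ is defined by $\del\sigma_i = 1$ and extended by the Leibniz rule with respect to $\cdot$, left multiplication by $\sigma_1$ is well-defined whenever $m \geq 2$, and the Leibniz rule over a field of characteristic two immediately yields $(\del h + h\del)(x) = x + \sigma_1 \cdot \del x + \sigma_1 \cdot \del x = x$. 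This is an instance of the general fact that a unital dg algebra over $\Field_2$ containing an element whose differential is the unit is automatically contractible, and your observation that one could just as well use right multiplication by any $\sigma_i$ underscores that only the chain-complex structure is at stake. What the paper's filtration argument buys is consistency of viewpoint with the rest of the paper (the nilCoxeter complex is being thought of as a grid Floer complex), but as a proof of acyclicity yours is the more elementary and more robust route.
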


\begin {proof}
We use a geometric argument similar to those in \cite[Proposition 3.8]{MOST} and \cite[Section 5]{MOT}. Recall that the the $\alpha $ and $\beta$ curves form an $(m-1)$-by-$(m-1)$ grid in the plane, made of $(m-1)^2$ small rectangles. Mark an $X$ in each small rectangle except the ones in the first row (between $\alpha_1$ and $\alpha_2$). For $\x, \x' \in \S(\alpha, \beta)$ and $r \in \Rect(\x, \x'),$ we let $X(r)$ be the number of $X$ markings in the interior of the rectangle $r.$

Define a filtration $\Filt$ on $C(n)$ by the normalization $\Filt(\x_{\id}) = 0$ and the requirement that
$$ \Filt (\x) - \Filt(\x') = X(r),$$
whenever there exists a rectangle $r \in \Rect(\x, \x').$ This specifies a filtration $\Filt$ uniquely. In the associated graded $\gr_\Filt C(m),$ the differential is given just by counting rectangles supported in the first row. Such a rectangle exists between $\x_{w}$ and $\x_{w'}$ if and only if $w(1) < w(2)$ and $w' = \sigma_1 w.$ Thus, every $\x \in \S(\alpha, \beta)$ is either the initial or the final point of a unique such rectangle. This means that the differential on $\gr_\Filt C(m)$ makes the generators $\x_w$ cancel one another in pairs, yielding trivial homology. Since $\gr_\Filt C(m)$ is acyclic, so is $C(m) \cong \nil_m.$
\end {proof}

\begin {remark}
Let $\nil'$ be the sequential 2-algebra with $\nil'_m = \nil_m$ for $m=0,1$ and $\nil'_m = 0$ for $m \geq 2.$ The natural projection $\ol{\nil} \to \ol{\nil'}$ is a homomorphism with respect to both multiplications, and is a quasi-isomorphism according to Lemma~\ref{lemma:hom}. A quasi-isomorphism of differential graded algebras induces an equivalence of the corresponding derived categories of dg-modules over the algebras---see for example \cite[Theorem 10.12.5.1]{BernsteinLunts} and \cite[Proposition 2.4.10]{LOTbimodules}. We expect that a quasi-isomorphism of sequential 2-algebras induces an equivalence of derived categories of algebra-modules, and of derived categories of 2-modules.  (The notions of algebra-module and 2-module are described in the next two subsections.)  Supposing that such equivalences exist, it would still be inadvisable to replace the nilCoxeter 2-algebra $\nil$ with the truncation $\nil'$ in our constructions, because the planar-Floer 2-modules (presented in Sections~\ref{sec:cpa} and \ref{sec:cpd}) would become rather more complicated and less natural.
\end {remark}

\subsection{Sequential algebra-modules}
\label {sec:sam}

In the following, let $\A$ be an arbitrary unital sequential 2-algebra. 

\begin{definition}
\label {def:bmod}
A (differential graded, sequential) {\em top algebra-module} $\T$ over $\A$ consists of a sequence of differential graded right modules $\T_m$ over $\A_m$ for $m \geq 0,$ together with chain maps:
$$ \T_m \otimes \T_n \to \T_{m+n}, \ \ \phi \otimes \psi \to \phi * \psi,$$
satisfying 
$$  (\phi * \psi) * \zeta = \phi * (\psi * \zeta),$$
for any $\phi \in \T_m, \psi \in \T_n, \zeta \in \T_p,$ and
$$ (\phi * \psi) \cdot (a * b) = ( \phi \cdot a) * ( \psi \cdot b),$$
for any $a \in \A_m, b \in \A_n, \phi \in \T_m, \psi \in \T_n,$ where $\cdot$ denotes the algebra action on the modules.

The algebra-module $\T$ is called {\em unital} if there exists $1 \in \T_0$ which is a unit with respect to the $*$ multiplication. (Note also that when we say that $\T_m$ is a module over the unital algebra $\A_m,$ we implicitly require that $\phi \cdot e_m = \phi$ for any $\phi \in \T_m$.)    
\end {definition}

\nid The name algebra-module comes from the fact that $\ol \T = \prod_m \T_m$ is a differential graded algebra with respect to the $*$ multiplication, and a $\ol \A$-module with respect to the $\cdot$ multiplication.

We define a {\em bottom algebra-module} $\B = \{\B_p\}_{p \geq 0}$ over $\A$ in the same way, except that the $\B_p$ are left modules over $\A_p.$ We set $\ol \B = \prod_p \B_p.$ The names top and bottom (instead of right and left) come from viewing $\cdot$ as a vertical multiplication, in the spirit of Section~\ref{sec:nilcox2a}.

We introduce the notation $\odot$ to denote tensor product with respect to the $\cdot$ multiplication.

\begin {lemma}
\label {lem:deco}
Let $\A$ be a unital sequential 2-algebra, $\T$ a top algebra-module over $\A,$ and $\B$ a bottom algebra-module over $\A.$ Then: 
 \begin {equation}
 \label {eq:decompose}
 \ol \T \odot_{\ol \A} \ol \B =  \prod_m (\T_m \odot_{\A_m} \B_m).
  \end {equation}
\end {lemma}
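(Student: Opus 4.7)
The strategy is to exploit the product structure of $\ol\A = \prod_m \A_m$ through a family of orthogonal idempotents. Although $\ol\A$ is not a coproduct of its factors, each unit $e_m \in \A_m$ lifts to an element $\hat e_m \in \ol\A$ (placing $e_m$ in the $m$th coordinate and zero elsewhere), and these satisfy $\hat e_m \cdot \hat e_n = \delta_{m,n}\, \hat e_m$. Acting by $\hat e_m$ extracts the $m$th component of each module: $\phi \cdot \hat e_m$ may be identified with $\phi_m \in \T_m$ (viewed as the submodule of $\ol\T$ supported on the $m$th coordinate), and analogously $\hat e_m \cdot \psi$ with $\psi_m \in \B_m$.

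First I would define the natural map
$$ \Phi : \ol\T \odot_{\ol\A} \ol\B \longrightarrow \prod_m \bigl(\T_m \odot_{\A_m} \B_m\bigr), \qquad \phi \otimes \psi \longmapsto (\phi_m \otimes \psi_m)_{m \geq 0}. $$
Well-definedness on $\ol\T \otimes_\k \ol\B$ is obvious from the pointwise definition of the components, and $\Phi$ descends through the balancing relation because the $\cdot$ action of $\ol\A$ is defined coordinatewise: the relation $(\phi \cdot a) \otimes \psi \sim \phi \otimes (a \cdot \psi)$ in $\ol\T \odot_{\ol\A} \ol\B$ maps precisely to $(\phi_m \cdot a_m) \otimes \psi_m \sim \phi_m \otimes (a_m \cdot \psi_m)$ in each factor.

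For the inverse, the essential computation is that cross-components vanish in the $\ol\A$-tensor product. For $\phi_m \in \T_m \subset \ol\T$ and $\psi_n \in \B_n \subset \ol\B$ with $m \neq n$, the balancing relation gives
$$ \phi_m \otimes \psi_n \;=\; (\phi_m \cdot \hat e_m) \otimes \psi_n \;=\; \phi_m \otimes (\hat e_m \cdot \psi_n) \;=\; 0, $$
since $\hat e_m$ annihilates $\B_n$ when $m \neq n$. Hence the inclusions $\T_m \otimes_\k \B_m \hookrightarrow \ol\T \otimes_\k \ol\B$ descend to maps $\T_m \odot_{\A_m} \B_m \to \ol\T \odot_{\ol\A} \ol\B$ whose images are mutually orthogonal, and these assemble into the desired inverse $\Psi$. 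Verifying $\Phi \circ \Psi = \id$ and $\Psi \circ \Phi = \id$ is then a straightforward unwinding of the component-extraction formulas.

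The main obstacle is interpretational rather than computational: the naive $\k$-vector space $\ol\T \otimes_\k \ol\B$ consists of finite sums, while the right-hand side is an unrestricted product over $m$, so strict equality requires reading $\ol\T \odot_{\ol\A} \ol\B$ as the tensor product completed compatibly with the direct-product structure of $\ol\A$. Once this is granted, the orthogonality of the idempotents $\hat e_m$ is exactly what forces every class to split canonically into its diagonal components, and $\Phi$, $\Psi$ become mutually inverse.
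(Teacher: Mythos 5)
You take the same route as the paper: the vanishing $\phi \odot \phi' = \phi \odot (e_n \cdot \phi') = (\phi \cdot e_n) \odot \phi' = 0$ for $\phi \in \T_m$, $\phi' \in \B_n$, $m \neq n$, where $e_n$ denotes the element of $\ol\A$ supported in coordinate $n$ (what you write as $\hat e_n$), is precisely the paper's one-line computation. Your maps $\Phi$ and $\Psi$ just spell out the resulting identification more explicitly.

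The caveat you flag at the end, however, is a genuine one, and the paper's proof is silent on it. Without a vanishing hypothesis the map $\Phi$ need not be surjective: take $\A_m = \k$ with $e_m = 1$ and $\T_m = \B_m = \k^{m+1}$; a sequence $(z_m) \in \prod_m (\T_m \otimes_{\A_m} \B_m)$ in which each $z_m$ has tensor rank $m+1$ cannot arise from any finite sum of simple tensors in $\ol\T \otimes_\k \ol\B$, so it is not in the image. The equality in \eqref{eq:decompose} is literal once one assumes $\T_m = \B_m = 0$ for all $m$ sufficiently large, which holds for every algebra-module constructed in the paper (for instance $\T(N)_m = 0$ for $m > N$ in Section~\ref{sec:stam}); and since Definition~\ref{def:odot} in any case \emph{adopts} the right-hand side of \eqref{eq:decompose} as the definition of $\T \odot_\A \B$, the subtlety is immaterial downstream. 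Still, your instinct to single it out rather than assert the identity unqualified is sound, and a cleaner patch than an appeal to a completed tensor product would simply be to add the vanishing hypothesis explicitly.
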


\begin {proof}
For any $\phi \in \T_m,  \phi' \in \B_n$ with $m \neq n$, we use the fact that $\A$ is unital to get the following relation in $ \ol \T \odot_{\ol \A} \ol \B $:
$$ \phi \odot \phi' = \phi \odot (e_n \cdot \phi') = (\phi \cdot e_n) \odot \phi' = 0 \odot \phi' = 0.$$
Hence the left-hand side of \eqref{eq:decompose} decomposes as $\prod_m (\T_m \odot_{\ol \A} \B_m).$ This expression is, in turn, the same as the right-hand side of \eqref{eq:decompose}.
\end {proof}

\begin{definition}
\label{def:odot}
 Let $\A$ be a unital sequential 2-algebra, $\T$ a top algebra-module over $\A,$ and $\B$ a bottom algebra-module over $\A.$ The tensor product $\alg = \T \odot_{\A} \B$ is defined to be the expression~ \eqref{eq:decompose}. It has the induced structure of a differential graded algebra, with the multiplication given by applying the entrywise $*$ product,
$$ (\phi \odot \phi') * (\psi \odot \psi') := (\phi * \psi) \odot (\phi * \psi'), $$
for all $\phi \in \T_m, \psi \in \T_n, \phi' \in \B_m, \psi' \in \B_n.$
\end {definition}

\nid Note that if $\T$ and $\B$ are unital, so is the algebra $\T \odot_{\A} \B.$

The top and bottom algebra-modules are modules with respect to the vertical $\cdot$ multiplication and algebras with respect to the horizontal $*$ multiplication. We also have notions of right and left  algebra-modules, which are modules horizontally and algebras vertically. Their definitions are somewhat different from those of top and bottom algebra-modules, because of the lack of symmetry between the two multiplications on a sequential 2-algebra. In particular, left and right algebra-modules are collections of complexes indexed by two subscripts rather than just one:

\begin {definition}
A (differential graded, sequential) {\em right algebra-module} $\R$ over $\A$ is a collection of chain complexes $\{ \R_{m,p} | m, p \geq 0 \}$ together with chain maps:
$$ * : \R_{m,p} \otimes \A_{n} \to \R_{m+n, p+n},$$
$$ \cdot : \R_{m,n} \otimes \R_{n, p} \to \R_{m, p},$$
satisfying the following associativity and local commutativity relations:
\begin {eqnarray}
\label {eq:alc1}
(\phi \cdot \psi) \cdot \zeta &=& \phi \cdot (\psi \cdot \zeta), \hskip1cm \forall \phi \in \R_{m, n}, \psi \in \R_{n,p}, \zeta \in \R_{p,q}, \\
\label{eq:alc2}
\phi * (a * b)&=& (\phi * a) * b, \hskip1.2cm \forall  \phi \in \R_{m,p}, a \in \A_{n}, b \in \A_{q},  \\
\label{eq:alc3} 
(\phi * \psi) \cdot (a * b)&=& (\phi \cdot a) * (\psi \cdot b), \ \ \ \forall \phi \in \R_{m,n}, \psi \in \R_{n,p}, a, b \in \A_{q}.
\end {eqnarray}
$\R$ is called {\em unital} if there exists $1_0 \in \R_{0,0}$ such that $1_m := 1_0 * e_m$ is a unit for the vertical multiplication on $\R_{m,m}$, for any $m \geq 0$; and also $\phi * e_0 = \phi$ for any $\phi \in \R_{m,p}, m, p \geq 0$. 

A {\em left algebra-module} $\L$ over $\A$ is a collection of chain complexes $\{ \L_{m,p} | m, p \geq 0 \}$ together with chain maps:
$$ * : \A_{n} \otimes \L_{m, p} \to \L_{m+n, p+n},$$
$$ \cdot : \L_{m,n} \otimes \L_{n,p} \to \L_{m, p},$$
satisfying associativity and local commutation relations similar to \eqref{eq:alc1}-\eqref{eq:alc3}. $\L$ is called {\em unital} if there exists $1_0 \in \L_{0,0}$ such that $1_m :=  e_m * 1_0$ is a unit for the vertical multiplication on $\L_{m,m}$, for any $m \geq 0$; and $e_0 * \phi = \phi$ for any $\phi \in \L_{m,p}, m, p \geq 0$.
\end {definition}

To visualize the definitions of top, bottom, right and left algebra-modules, it helps to think of the generators of the algebra-modules as boxes, as in Figure~\ref{fig:mods}. The elements of the 2-algebra act on them in one direction (by horizontal or vertical concatenation), and the algebra-modules have their own multiplication (also concatenation) in the other direction.

\begin{figure}
\begin{center}
\input{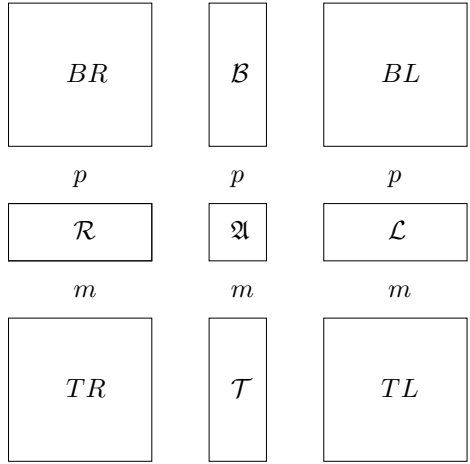}
\end{center}
\caption {{\bf A sequential 2-algebra, algebra-modules, and 2-modules.} The notation refers to the relative position of the 2-algebra $\A$ with respect to the object. For example, the top-right 2-module $TR$ has $\A$ near its top right corner.
}
\label{fig:mods}
\end{figure}

We define $\ol \R$ (resp. $\ol \L$) as the direct product of all $\R_{m,p}$ (resp. $\L_{m,p}$). These are right, resp. left modules over $\ol \A$, with respect to horizontal multiplication. With respect to vertical multiplication, $\ol \R$ and $\ol \L$ are not automatically algebras: Given $\phi=(\phi_{m,p})_{m,p \geq 0} \in \ol \R$ and $\phi' = (\phi'_{m,p})_{m, p \geq 0} \in \ol \R$, if we try to set
\begin{equation}
\label{eq:mf}
 \phi \cdot \phi' = \bigl (\sum_j \phi_{m,j} \cdot \phi'_{j, p} \bigr )_{m, p \geq 0},
 \end{equation}
we run into the problem that the sum over $j$ may be infinite. 

\begin{definition}
\label{def:mf}
We say that a right algebra-module $\R$ is {\em multiplicatively finite} if the sums appearing in \eqref{eq:mf} are finite, for any $\phi$ and $\phi'$. A corresponding definition applies to left algebra-modules.
\end{definition}

If $\R$ is multiplicatively finite, then $\ol \R$ is an algebra. A particular condition that guarantees multiplicative finiteness is if we have $\R_{m,p} = 0$ whenever $m < p$ (or alternately, if $\R_{m,p} = 0$ whenever $m > p$). The right and left algebra-modules that will appear later in this paper are multiplicatively finite, for this kind of reason.

By analogy with Definition~\ref{def:odot}, we can introduce the {\em horizontal tensor product} of right and left algebra-modules
$$ \R \oast_\A \L := \ol \R \oast_{\ol \A} \ol \L.$$
\nid For the horizontal tensor product we do not have an exact analogue of Lemma~\ref{lem:deco}, that is, a full decomposition of  $ \R \oast_\A \L$ as a direct product of local tensor products. Indeed, the local pieces $\R_{m,p}$ and $\L_{m,p}$ are not even modules over $\A_n$ (because the algebra action takes them to different local pieces). We only have a partial decomposition
$$\R \oast_\A \L =   \prod_{m, p} (\R \oast_\A \L)_{m,p}$$
where the local piece $(\R \oast_\A \L)_{m,p}$ is generated by elements of the form $\phi \odot \psi,$ with $\phi \in \R_{m',p'},  \psi \in \B_{m'', p''},$ such that $m'+m''=m$ and $p'+p'' =p$.

As a consequence,  it turns out that, in general (and even if $\R$ and $\L$ are multiplicatively finite), $\R \oast_\A \L$ does not have a well behaved algebra structure with respect to the vertical multiplication. Indeed, we would be tempted to set
$$ (\phi \oast \psi) \cdot (\phi' \oast \psi') = (\phi \cdot \phi') \oast (\psi \cdot \psi')$$
for $\phi \in \R_{m,n}, \phi' \in \R_{n,p}, \psi \in \R_{m', n'}, \psi'\in \R_{n', p'}$, and
$$  (\phi \oast \psi) \cdot (\phi' \oast \psi') = 0$$
for $\phi \in \R_{m,n}, \phi' \in \R_{q,p}, \psi \in \R_{m', n'}, \psi'\in \R_{q', p'}$ with  $n \neq q$ or $n' \neq q'$. When the algebra-modules under consideration are unital, this quickly leads to a contradiction. For example, on the one hand,
$$ (1_0 \oast 1_1) \cdot (1_0 \oast 1_1) = (1_0 \cdot 1_0) \oast (1_1 \cdot 1_1) = 1_0 \oast 1_1.$$
On the other hand,
$$ 1_0 \oast 1_1 =1_0 \oast (e_1 * 1_0) = (1_0 * e_1) \oast 1_0 = 1_1 \oast 1_0$$
and
$$ (1_0 \oast 1_1) \cdot (1_1 \oast 1_0) = 0.$$
This would imply that $1_0 \oast 1_1 =0$, thus either $0 = 1_0 \in \R_{0,0}$ (which contradicts the unitality of $\R$ unless $\R_{0,0} = 0$) or $0 = 1_1 \in \L_{1,1}$ (which contradicts the unitality of $\L$ unless $\L_{1,1} = 0$).

Because of this issue, in this paper we will not make use of the horizontal tensor product of algebra-modules. Note that we do use horizontal tensor products of usual dg-modules over a dg-algebra---for example when considering the expression \eqref{eq:bltr} below.

\begin {remark}
When working with dg-modules over an algebra, it is sometimes necessary to consider the derived tensor product. In all the situations considered in this paper, at least one of the two dg-modules in a tensor product will be projective over the algebra in question. This implies that the derived and usual tensor products coincide. Compare \cite[Remark 8.1]{LOTplanar}.
\end {remark}

\subsection {Sequential 2-modules}
\label {sec:s2m}
Let $\A$ be a unital sequential 2-algebra, and let $\T, \R, \B, \L$ be top, right, bottom, and left algebra-modules over $\A,$ respectively. Further, suppose that $\R$ and $\L$ are multiplicatively finite.

\begin {definition} \label {def:amods}
A (differential graded, sequential) {\em top-right 2-module} $TR$ over $\R$ and $\T$ is a collection of chain complexes $\{TR_{m} | m \geq 0 \}$ together with chain maps  
$$ * : TR_{m} \otimes \T_{m'} \to TR_{m+m'},$$
$$ \cdot : TR_{m} \otimes \R_{m, p} \to TR_{p},$$
satisfying associativity and local commutation relations as follows:
\begin {eqnarray}
\label {eq:alc1'}
(\x * \phi) * \psi &=& \x * (\phi * \psi), \hskip.9cm \forall \x \in TR_{m},\phi \in \T_{m'}, \psi \in \T_{m''}, \\
\label{eq:alc2'}
\x \cdot (\phi \cdot \psi)&=& (\x \cdot \phi) \cdot \psi, \hskip1.1cm \forall  \x \in TR_{m},\phi \in \R_{m, n}, \psi \in \R_{n,p},  \\
\label{eq:alc3'} 
(\x * \phi) \cdot (\psi * a)&=& (\x \cdot \psi) * (\phi \cdot a), \hskip .2cm \forall \x \in TR_{m}, \phi \in \T_{m'}, \psi \in \R_{m, p}, a \in \A_{m'}.
\end {eqnarray}

If $\R$ and $\T$ are unital, we also demand that 
\begin {eqnarray}
\label {eq:uni1}
\x * 1 &=& \x, \hskip.9cm \forall \x \in TR_{m}, \text{ where } 1 \in \T_{0}, \\
\label{eq:uni2}
\x \cdot 1_m&=& \x, \hskip.9cm \forall  \x \in TR_{m}, \text{ where } 1_m \in \R_{m,m}.
\end {eqnarray}

A {\em bottom-right 2-module} $BR$ over $\R$ and $\B$ is a collection of chain complexes $\{BR_{p} | p \geq 0 \}$ together with chain maps  
$$ * : BR_{p} \otimes \B_{p'} \to BR_{p+p'},$$
$$ \cdot : \R_{m,p} \otimes BR_{p} \to BR_{ m},$$
satisfying associativity and local commutation relations similar to \eqref{eq:alc1'}-\eqref{eq:alc3'}. Further, if $\R$ and $\B$ are unital, we impose conditions similar to \eqref{eq:uni1}-\eqref{eq:uni2}.

A {\em bottom-left 2-module} $BL$ over $\L$ and $\B$ is a collection of chain complexes $\{BL_{p} | p\geq 0 \}$ together with chain maps  
$$ * : \B_{p} \otimes BL_{p'} \to BL_{p+p'},$$
$$ \cdot : \L_{m,p} \otimes BL_{ p} \to BL_{m},$$
satisfying associativity and local commutation relations similar to \eqref{eq:alc1'}-\eqref{eq:alc3'}. Further, if $\L$ and $\B$ are unital, we impose conditions similar to \eqref{eq:uni1}-\eqref{eq:uni2}.

A {\em top-left 2-module} $TL$ over $\L$ and $\T$ is a collection of chain complexes $\{TL_{m} | m \geq 0 \}$ together with chain maps  
$$ * : \T_{m} \otimes TL_{m'} \to TL_{m+m'},$$
$$ \cdot : TL_{m} \otimes \L_{m, p} \to TL_{p},$$
satisfying associativity and local commutation relations similar to \eqref{eq:alc1'}-\eqref{eq:alc3'}. Further, if $\L$ and $\T$ are unital, we impose conditions similar to \eqref{eq:uni1}-\eqref{eq:uni2}.
\end {definition}

\nid Again, it helps to look at Figure~\ref{fig:mods} to visualize these definitions. 

If $TR$ is a top-right 2-module over $\R$ and $\T$, we set
$$ \overline{TR} = \prod_{m \geq 0} TR_m.$$
\nid We say that $TR$ is {\em of finite type} if $TR_m = 0$ for all $m \gg 0$, so that the direct product above is a finite direct sum. Note that $\ol{TR}$ is always a module over $\ol{T} = \prod_m \T_m$ (with respect to the horizontal multiplication). If $TR$ is of finite type, then $\overline{TR}$ is also a module over $\ol{R} = \prod R_{m,p}$ (with respect to the vertical multiplication). Similar definitions and remarks apply to the other kinds of 2-modules.

We can tensor together certain types of 2-modules and obtain ordinary (differential graded) modules. 
Precisely, if $TR$ is a top-right 2-module of finite type over $\R$ and $\T$, and $BR$ is a bottom-right 2-module of finite type over $\R$ and $\B$, we can form the tensor product
\begin {equation}
\label {eq:trbr}
 TR \odot_{\R} BR := \ol{TR} \odot_{\ol{\R}} \ol{BR}.
 \end {equation}

A typical generator for the tensor product $TR \odot_{\R} BR$ is of the form $\x \odot \y$, with $\x \in TR_m$ and $\y \in BR_p$. If $\R$ is unital, we automatically have $\x \odot \y = (\x \cdot 1_m) \odot \y = \x \odot (1_m \cdot \y) = 0$ unless $m=p$. Thus we only have generators $\x \odot \y$, with $\x \in TR_m$ and $\y \in BR_m$. In the tensor product, such a generator could be identified with one of the form  $\x' \odot \y'$, with $\x' \in TR_p$ and $\y' \in BR_p$, where $p \neq m$. Thus, in this case we do 
not have a decomposition similar to \eqref{eq:decompose}.

\begin {definition}
 Let $\A$ be a unital sequential 2-algebra, $\T, \R$ and $\B$ unital top, right and bottom algebra-modules over $\A,$ respectively, $TR$ a top-right 2-module of finite type over $\R$ and $\T$, and $BR$ is a bottom-right 2-module of finite type over $\R$ and $\B$. The tensor product $TR \odot_{\R} BR$ is defined as in \eqref{eq:trbr}. It has an induced structure of differential graded right module over the 
differential graded algebra $\alg = \T \odot_\A \B$, as follows:
$$ (\x \odot \y) * (\phi \odot \psi)  := (\x * \phi) \odot (\y * \psi), $$
for all $\x \in TR_m, \y \in BR_m, \phi \in \T_{m'}, \psi \in \B_{m'}.$
\end {definition}

\nid Similarly, if $TL$ and $BL$ are top-left and bottom-left 2-modules of finite type over unital algebra-modules $\T, \L, \B$, we can form the tensor product
$$  TL \odot_{\L} BL :=  \ol{TL} \odot_{\ol{\L}} \ol{BL},$$
which has an induced structure of (differential graded) left module over the algebra $\T \odot_\A \B.$ 

It is not useful to take the horizontal tensor products 
$$ TR \oast_{\T} TL, \ \  \ \ BR \oast_{\B} BL,$$
because, as explained in the previous subsection, $\R \oast_\A \L$ is not even an algebra.

\begin {definition}
\label {def:odotoast}
The chain complex 
\begin {equation}  
\label {eq:bltr}
(TR \odot_{\R} BR) \oast_{(\T \odot_\A \B)} (TL \odot_{\L} BL)\end {equation}
 is called the {\em double tensor product} of $TR, BR, TL, BL$ over $\T, \R, \B, \L$ and $\A$, and is denoted
$$ \odot \oast ( TR, BR, TL, BL \ | \ \T, \R, \B, \L \ | \ \A),$$
or simply $ \odot \oast (TR, BR, TL, BL).$
\end {definition}

\begin {remark}
One can consider more general notions of sequential 2-algebras, algebra-modules, and 2-modules, in which all objects have two subscripts (corresponding to the bottom and top edges of all the rectangles in Figure~\ref{fig:mods}). Precisely, we could define a generalized sequential 2-algebra $\A$ to consist of  chain complexes $\A_{m,p}$ for each $m, p \geq 0$, together with multiplications 
$$ * : \A_{m,p} \otimes \A_{m',p'} \to \A_{m+m', p+p'}, \ \ \ \cdot : \A_{m,p} \otimes \A_{p, q} \to  \A_{m,q}$$
satisfying associativity, local commutation, and Leibniz rules. Our notion of sequential 2-algebra can be recovered as the particular case in which $\A_{m,p} = 0$ unless $m=p$. Similarly, we can define a generalized top algebra-module $\T$ over $\A$ as a collection of chain complexes $\T_{l,m}$ for $l,m \geq 0$, together with operations
$$ * : \T_{l,m} \otimes \T_{l',m'} \to \T_{l+l', m+m'}, \ \ \ \cdot : \T_{l,m} \otimes \A_{m, p} \to  \T_{l,p}$$
satisfying associativity, local commutation, and Leibniz rules. We recover the previous top algebra-modules by imposing the condition $\T_{m,p} = 0$ unless $m=0$. Similarly, we can define generalized algebra-modules of the other kinds, as well as generalized 2-modules, all having two subscripts. These generalized notions are somewhat more natural from a categorical point of view. However, we have emphasized the more specialized definitions (in which some objects have two subscripts and some only one), as they are sufficient for this paper.
\end {remark}

\section {Slicing the strands algebra}
\label {sec:LOT}

\subsection {The strands algebra}
\label {sec:strands}

We review here the definition of the strands algebra $\alg(N, k)$ introduced by Lipshitz, Ozsv\'ath and Thurston in  \cite[Section 3.1]{LOT}. Given integers $N \geq k \geq 0,$ we let $\alg(N, k)$ be the $\k$-vector space generated by triples $(S, T, \phi),$ where $S$ and $T$ are $k$-element subsets of $[N] = \{1, \dots, N\}$, and $\phi: S \to T$ is a bijection satisfying $i \leq \phi(i)$ for all $i \in S.$ To such a triple we can associate a number $\inv(\phi)$, the number of inversions of $\phi$, that is, the number of pairs $i, j \in S$ with $i<j$ and $\phi(j) < \phi(i).$ We then define a multiplication on $\alg(N,k)$ by
$$ (S_1, T_1, \phi_1) * (S_2, T_2, \phi_2) = \begin{cases}
(S_1, T_2, \phi_2 \circ \phi_1) & \text{if } T_1 = S_2 \text{ and } \inv(\phi_2 \circ \phi_1 ) = \inv(\phi_1) + \inv(\phi_2)\\
0 & \text{otherwise.} 
\end {cases} $$

Let $\Inv(\phi)$ denote the set of inversions of $\phi,$ so that $\inv(\phi) = \# \Inv(\phi).$ For $\tau=(i,j) \in \Inv(\phi)$ we let $\phi_\tau$ be obtained from $\phi$ by interchanging the values at $i$ and $j.$ We write $\phi' \prec \phi$ if $\inv(\phi') = \inv(\phi) - 1$ and $\phi'=\phi_\tau$ for some $\tau \in \Inv(\phi)$. We define a differential $\del$ on $\alg(N,k)$ by
$$\del (S, T, \phi) = \sum_{\phi' \prec \phi} (S, T, \phi').$$

Graphically, the generators of $\alg(N,k)$ are drawn as strand diagrams with upward-veering strands. For example,
$$\includegraphics[scale=0.8]{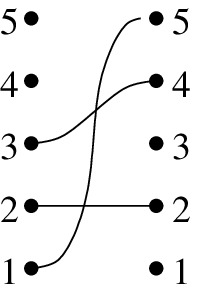}$$
represents the element $(\{1,2,3\}, \{2,4,5\}, \phi) \in \alg(5,3),$ where $\phi(1)=5$, $\phi(2)=2$, and $\phi(3)=4.$

The multiplication $*$ is horizontal concatenation (with double crossings set to zero), and the differential is the sum over all ways of smoothing a crossing---compare Figure~\ref{fig:nilcoxrel}. Since $\partial$ decreases $\inv$ by one, we can define a grading $\cro$ on $\alg(N, k)$ by setting $\cro(S, T, \phi) = \inv(\phi),$ that is, by counting the number of crossings in the corresponding strand diagram---compare Equation~\eqref{eq:cr}. Altogether then, $\alg(N, k)$ is a differential graded algebra.

It is convenient to consider the direct sum
$$ \alg(N) := \bigoplus_{k=0}^N \alg(N, k).$$
\nid In particular we can describe a set of multiplicative generators for $\alg(N).$ For each subset $S \subseteq [N],$ we let $ I_S := (S, S, \id_S)$.  For $1 \leq i < j \leq N,$ we let $ \rho_{i, j}$ be the sum of all triples $(S, T, \phi)$ such that $i \in S, j \in T, S \setminus \{i\} = T \setminus \{j \}, \phi(i) = j,$ and $\phi(r) = r$ for $r\neq i.$ Graphically, $\rho_{i, j}$ should be thought of as a {\em chord} from $i$ to $j.$ The elements $I_S$ and  $\rho_{i, j}$ generate $\alg(N)$ multiplicatively. These generators satisfy the following relations:
\begin {alignat}{2}
\sum_{S \subseteq [N]} I_S &= 1 & \qquad & \label{eq:rel1} \\
I_S * I_S &= I_S & \qquad & \label{eq:rel1.5} \\
I_S * I_T &= 0 & \qquad & \text{ for } S \neq T \label{eq:rel2}\\
I_S * \rho_{i, j} &= 0 &  \qquad & \text{ for } i \not \in S \text{ or } j \in S \label{eq:rel3}\\
\rho_{i, j} * I_S &= 0 &  \qquad & \text{ for } i \in S \text{ or } j \not \in S \label{eq:rel4}\\
\rho_{i, j} * I_{S \cup \{j\}} &= I_{S \cup \{i\}} * \rho_{i, j} &  \qquad & \text{ for } i, j \not\in S \label{eq:rel4.5}\\
I_S * \rho_{i, j} * \rho_{j, l} &= I_S * \rho_{i, l} &  \qquad & \text{ for } i < j < l \text{ and } j \not \in S \label{eq:rel5}\\
\rho_{i, j} * \rho_{l, m} &= \rho_{l, m} * \rho_{i, j}  & \qquad & \text{ for }  j < l \text{ or }  i < l < m < j \label{eq:rel6}\\
\rho_{i, j} * \rho_{l, m} &= 0  & \qquad &  \text{ for } i <  l < j < m. \label{eq:rel7}
\end {alignat}
\nid In fact, the above relations generate all the relations in $\alg(N).$

The differential acts on the generators by 
\begin {equation}
\label {eq:rel8}
\del I_S = 0, \ \ \ 
 \del \rho_{i,j} = \sum_{\{l | i<l<j\}} \rho_{l,j} * \rho_{i,l}.  \end {equation}

 We denote by $\I(N)$ the subalgebra of $\alg(N)$ generated by the idempotents $I_S$, and also let $\I(N, k) = \I(N) \cap \alg(N, k).$

 Given a collection of pairs $\{[i_1, j_1], \ldots, [i_s, j_s]\}$ with $i_r < j_r$ for all $r$ and such that all the $i_r$ are distinct and all the $j_r$ are distinct---such a collection is called a {\em consistent} set of Reeb chords---let $$\rho_{\{[i_1,j_1], \ldots, [i_s,j_s]\}} \in \alg(N)$$ denote the sum of all triples $(S,T,\phi)$ such that $\{i_1, \ldots, i_s\} \subset S$, $\{j_1,\ldots, j_s\} \subset T$, $S \setminus \{i_1, \ldots, i_s\} = T \setminus \{j_1, \ldots, j_s \}$, $\phi(i_r) = j_r$ for all $r$, and $\phi(l) = l$ for $l \nin \{i_1, \ldots, i_s\}$.  The element $\rho_{\{[i_1,j_1], \ldots, [i_s,j_s]\}}$ represents a collection of chords from $i_r$ to $j_r$. If $i_1 < i_2 < \dots < i_s$, we have
 $$ \rho_{\{[i_1,j_1], \ldots, [i_s,j_s]\}} = \rho_{i_s, j_s} * \dots * \rho_{i_1, j_1}.$$
 
\subsection{The strands algebra-modules}
\label {sec:stam}

Our goal here is to prove a decomposition theorem for the strands algebra $\alg(N+N')$, by slicing it into two algebra-modules $\T(N)$ and $\B(N')$ tensored over the nilCoxeter sequential 2-algebra $\nil.$ 

We define the top algebra-module $\T(N) = \{\T(N)_m \}_{m \geq 0}$ over $\nil$ as follows. (Recall Definition~\ref{def:bmod}.) The right $\nil_m$-module $\T(N)_m$ is zero for $m > N.$ For $m \leq N,$ we let 
$$ \T(N)_m = \bigoplus_{k=m}^N \T(N, k)_m. $$
Here the module $\T(N, k)_m$ is free, as a right $\nil_m$-module, with a basis given by triples $(S, T, \psi),$ where $S$ and $T$ are subsets of $[N]$ of cardinalities $k$ and $k-m,$ respectively, and $\psi: T \to S$ is an injective map satisfying $i \geq \psi(i)$ for all $i \in T.$ (In the case $m=0,$ one should think of $\psi$ as the inverse of $\phi$ from the previous definition of $\alg(N, k)=\T(N,k)_0$.)

A basis of $\T(N, k)_m$ over $\k$ is made of the elements $(S,T, \psi) \cdot \sigma,$ with $\sigma = \sigma_w \in \nil_m$ for some $w \in S_m.$ Graphically, we represent such a basis element by a strand diagram with $k$ upward-veering strands, $m$ of which go off at the top. The $m$ free strands at the top form the element $\sigma$ of the nilCoxeter algebra, drawn as in Figure~\ref{fig:nilcox1}, with the convention that $\sigma = 1$ corresponds to no intersections between the free strands. For example,
$$\includegraphics[scale=0.8]{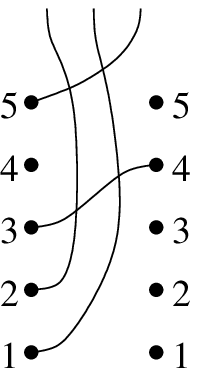}$$
represents an element $(S, T, \psi)\cdot \sigma \in \T(5,4)_3,$ with $S = \{1,2,3,5\}, T = \{4\},$ $\psi(4)=3$, and $\sigma=\sigma_1\sigma_2 \in \nil_3.$ 

We define a multiplication $$* : \T(N,k)_m \otimes \T(N, k-m)_n \to \T(N, k)_{m+n}$$ by horizontal concatenation, with double crossings set to zero. This produces a multiplication $* : \T(N)_m \otimes \T(N)_n \to \T(N)_{m+n}$ by setting products to zero when the number of strands at the interface does not match. The differential on $\T(N, k)_m$ is the sum over all smoothings, as usual; note that this is compatible with the differential on $\nil_m$, in the sense that it makes $\T(N, k)_m$ a differential module over $\nil_m$. The $\zz$-grading $\cro$ on $\T(N)$ is given by the number of crossings in the diagram, just as in $\alg(N).$ Altogether $\T(N)$ is a top algebra-module over $\nil.$ 

Forgetting for a moment about its vertical module structure, we can describe $$\ol \T(N) = \bigoplus_{m=0}^N \T(N)_m$$ as an algebra (with respect to the horizontal $*$ multiplication) in terms of generators and relations, as follows. In addition to the generators $I_S, \ S \subseteq [N]$ and $\rho_{i,j}, 1\leq i < j \leq N$ (similar to those for $\alg(N)$), we now also consider generators $\mu_i, 1 \leq i \leq N.$ Here, $\mu_i$ is the sum of all basis elements $(S, T, \psi) \cdot  1$ that have a unique upward veering strand, and such that their upward strand starts at height $i$ and goes off the top;  that is, $\mu_i$ is the sum of those $(S,T,\psi) \cdot 1$ with $i \notin T$, $S = T \cup \{i\}$, and $\psi(j) = j$, for $j \in T$.  We refer to $\mu_i$ as a {\em half-chord}. The generators $I_S$ and $\rho_{i,j}$ satisfy the same relations \eqref{eq:rel1}-\eqref{eq:rel7} as before, but now we also have the following relations involving $\mu_i$'s:
\begin {alignat}{2}
I_S * \mu_{i} &= 0 &  \qquad & \text{ for } i \not \in  S \label{eq:rel1'}\\
\mu_{i} * I_S &= 0 &  \qquad & \text{ for } i \in S \label{eq:rel2'}\\
I_S * \mu_i &= \mu_{i} * I_{S \setminus \{i \}} &  \qquad & \text{ for } i \in S \label{eq:rel2.5'}\\
I_S * \rho_{i, j} * \mu_{j} &= I_S * \mu_{i} &  \qquad & \text{ for } i < j \text{ and } j \not\in S  \label{eq:rel3'}\\
\mu_{i} * \rho_{j,l} &= \rho_{j,l} * \mu_{i}  & \qquad & \text{ for }  j < l <i \text{ or }  i < j < l \label{eq:rel4'}\\
\rho_{i, j} * \mu_{l} &= 0  & \qquad &  \text{ for } i <  l < j . \label{eq:rel5'}
\end {alignat}

This gives a presentation of $\ol \T(N)$ in terms of generators and relations. Note that we can indeed generate all elements of $\ol \T(N)$, including  the elements $(S, T, \psi) \cdot \sigma$ with $\sigma \in \nil_m$ nontrivial, from the multiplicative generators above. For example, for $i < j,$ we have:
$$ \mu_i * \mu_j = (\mu_j * \mu_i) \cdot \sigma_1.$$

Observe also that the differential acts on $\mu_i$ by
\begin {equation}
\label {eq:rel6'}
 \del \mu_i = \sum_{j > i} \mu_j * \rho_{i,j}.
 \end {equation}

We define a bottom algebra-module $\B(N) = \{\B(N)_m\}_{m \geq 0}$ over $\nil$ in a manner similar to $\T(N)$, except instead of diagrams with outgoing strands escaping at the top, we now use diagrams where strands are allowed to enter at the bottom. We let 
$$\B(N)_m = \bigoplus_{k=0}^{N-m} \B(N, k)_m,$$
where each $\B(N, k)_m$ is free as a left $\nil_m$-module, with a basis given by triples $(S, T, \phi),$ where $S$ and $T$ are subsets of $[N]$ of cardinalities $k$ and $k+m,$ respectively, and $\phi: S \to T$ is an injective map satisfying $i \leq \phi(i)$ for all $i \in S.$ A basis of $\B(N, k)_m$ over $\k$ is made of quadruples $\sigma \cdot (S, T, \phi)$, for $\sigma \in \nil_m.$ We define the horizontal multiplication and the differential as before. A multiplicative basis for $$\ol \B(N) = \oplus_{m=0}^N \B(N)_m$$ is given by elements of the form $I_S, \rho_{i,j}$ and $\nu_j$, where $\nu_j$ is represented by a half-chord starting from the bottom and ending at height $j.$ These generators satisfy relations analogous to \eqref{eq:rel1}-\eqref{eq:rel6'}.

\begin {theorem}
\label {thm:bnt}
For any $N, N' \geq 0$, there is an isomorphism of differential graded algebras:
 $$ \alg(N+N') \cong \T(N) \odot_{\nil} \B(N').$$
 \end {theorem}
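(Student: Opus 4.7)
The strategy is to construct an isomorphism $\Psi : \alg(N+N') \to \T(N) \odot_\nil \B(N')$ by slicing each strand diagram along the horizontal line separating positions $N$ and $N+1$, with inverse $\Phi$ given by vertical stacking. Given a generator $(S, T, \phi) \in \alg(N+N')$, each strand of $\phi$ is of one of three types: \emph{below} (both endpoints in $[N]$), \emph{above} (both endpoints in $[N+1, N+N']$), or \emph{crossing} (initial endpoint in $[N]$, terminal endpoint in $[N+1, N+N']$); the upward-veering condition $i \leq \phi(i)$ excludes strands going from above to below. Let $m$ be the number of crossing strands, and let $k_1, k_2$ be the numbers of below and above strands. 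The below strands together with the initial halves of the crossing strands assemble into a basis element $\x \in \T(N, k_1+m)_m$, with $\psi'$ the inverse of $\phi$ restricted to the below strands; the above strands together with the terminal halves of the crossing strands, relabeled by shifting positions down by $N$, assemble into a basis element $\y \in \B(N', k_2)_m$. Ordering the $m$ crossing strands by their initial positions gives $i_1 < \cdots < i_m$, and by their terminal positions gives $j_1 < \cdots < j_m$; the permutation $\tau \in S_m$ defined by $\phi(i_a) = N + j_{\tau(a)}$ determines an element $\sigma_\tau \in \nil_m$. Define $\Psi(S,T,\phi) := \x \odot (\sigma_\tau \cdot \y)$.

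To verify that $\Psi$ is a well-defined bijection, I would check that the three-way decomposition of each strand diagram is canonical, and that the ambiguity in absorbing $\sigma_\tau$ into either factor corresponds precisely to the relation $(\x \cdot \sigma) \otimes \y = \x \otimes (\sigma \cdot \y)$ inside $\T(N)_m \otimes_{\nil_m} \B(N')_m$. The inverse $\Phi$ sends a generator $\x \odot \y$ (with $\x \in \T(N)_m$, $\y \in \B(N')_m$) to the strand diagram obtained by stacking the diagram for $\x$ below the diagram for $\y$, with $\y$'s position labels shifted up by $N$, matching the $m$ outgoing strands of $\x$ to the $m$ incoming strands of $\y$. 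The compositions $\Phi \circ \Psi$ and $\Psi \circ \Phi$ are then identities by direct inspection.

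The grading is preserved because the set of inversions of $\phi$ partitions by strand type. Inversions involving only below strands, or one below and one crossing strand, are realized as crossings in the bottom half of the diagram and contribute to $\cro(\x)$; inversions involving only above strands, or one above and one crossing strand, lie in the top half and contribute to $\cro(\y)$; inversions involving two crossing strands correspond exactly to inversions of $\tau$ and contribute $\inv(\tau) = \cro(\sigma_\tau)$. The remaining class of pairs---one below and one above---contributes nothing, because upward-veering forces $\phi(i_b) \leq N < \phi(i_a)$ for any below $i_b$ and above $i_a$, excluding an inversion. Summing gives $\inv(\phi) = \cro(\x) + \cro(\sigma_\tau) + \cro(\y)$, as required.

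For the differential and multiplication I would argue diagrammatically. The differential $\del(S,T,\phi) = \sum_{\phi' \prec \phi}(S,T,\phi')$ corresponds to smoothing a single crossing of the strand diagram; the smoothed crossing lies either in the below part (captured by the differential on $\T(N)$), the above part (captured by the differential on $\B(N')$), or among the crossing strands at the interface (captured by the differential on $\nil_m$, transported across the $\nil_m$-balanced tensor product). The product in $\alg(N+N')$ is left-to-right concatenation of strand diagrams, which commutes with the slicing since the slice of a concatenation is the concatenation of the slices in the bottom and top halves separately; on the other side, the algebra structure on $\T(N) \odot_\nil \B(N')$ from Definition~\ref{def:odot} applies $*$ entrywise to the $\T$- and $\B$-factors. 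The main bookkeeping obstacle will be the inversion partition argument together with verifying that the interface permutation $\sigma_\tau$ transforms correctly under the $\nil_m$-action when comparing different representatives of the $\odot_\nil$-class; both reduce to careful planar analysis using the convention from Section~\ref{sec:stam} that in the canonical basis elements the free strands exit (resp.\ enter) in the ambient order of their fixed endpoints.
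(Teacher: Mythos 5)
Your proposal is correct and takes essentially the same approach as the paper: the paper's proof constructs the isomorphism in the opposite direction, as a gluing map $(S,T,\psi) \cdot \sigma \cdot (S',T',\phi) \mapsto (S \cup (S'+N), T \cup (T'+N), \psi \cup_\sigma \phi)$, and your $\Psi$ is exactly its inverse, obtained by slicing at height $N+\tfrac12$. You additionally spell out the grading verification (the partition of $\Inv(\phi)$ by strand type, with below-above pairs contributing nothing by upward-veering) and the differential/multiplication checks, all of which the paper's proof dismisses as clear, but the underlying geometric picture and the decomposition of generators are the same.
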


\begin {proof}
The generators of 
$$ \T(N, k)_m \odot_{\nil_m} \B(N', k')_m,$$
as a $\k$-vector space, are of the form
$$ (S, T, \psi) \cdot \sigma \cdot (S', T', \phi)$$
with notations as before: $S, T \subseteq [N],\ S', T' \subseteq [N'],$ and so forth. From any such generator we can build a triple
\begin {equation}
\label {eq:cup}
 (S \cup (S'+N), T \cup (T'+N), \psi \cup_\sigma \phi),
 \end {equation}
as follows. The operation $+$ denotes adding the value $N$ to all the elements of a set. The element $\sigma=\sigma_w \in \nil_m$ corresponds to a permutation $w$ on $m$ elements. In turn, $w$ induces  a bijection between any totally ordered sets of $m$ elements each. In particular, it gives a bijection $\ol w$ between $S \setminus \im(\psi) $ and $(T' \setminus \im(\phi)) + N.$ 

We then let $$\psi \cup_\sigma \phi : \bigl( S \cup (S'+N) \bigr) \to \bigl( T \cup (T'+N) \bigr)$$
be defined by
$$\psi \cup_\sigma \phi (i) = \begin{cases}
\psi^{-1}(i) & \text{if } i \in \im(\psi) \subseteq S, \\
\ol w(i) & \text{if } i \in S \setminus \im(\psi), \\
\phi(i-N) + N & \text{if } i \in (S'+N). 
\end {cases}$$

The elements \eqref{eq:cup} form a subcomplex of $\alg(N+N', k+k')$. Indeed, graphically, they correspond to diagrams crossing the horizontal line at height $N+1/2$ exactly $m$ times. For example,
$$\includegraphics[scale=0.8]{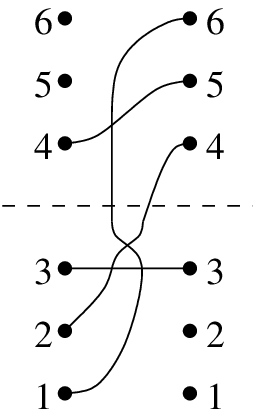}$$
represents an element in $\alg(6,4),$ decomposed as the tensor product of an element in $\T(3,3)_2$ and one in $\B(3,1)_2.$

Considering all possible values of $m, k, $ and $k',$ we obtain the desired isomorphism at the level of chain complexes. Compatibility with the horizontal multiplication, the grading, and the differential is clear. 
\end {proof}

Though Theorem~\ref{thm:bnt} gives a way of describing the whole algebra $\alg(N+N') = \oplus_k \alg(N+N', k)$, each piece $\alg(N+N', k)$ can also be recovered as
$$ \alg(N+N', k) = \bigoplus_{\substack{m, k', k'' \\ k' + k'' = k}} \T(N, k')_m \odot_{\nil_m} \B(N', k'')_m.$$

\subsection {Gradings by non-commutative groups}
\label {sec:ncgrading}

So far we have only discussed $\zz$-gradings on our algebras and algebra-modules, given by the number of crossings in the respective diagrams. However, as explained in \cite[Section 2.5]{LOT}, in bordered Floer homology one has to work with more complicated gradings.

Let $G$ be a group and $\lambda \in G$ a distinguished element in the center of $G$. We refer to $(G, \lambda)$ as a {\em group pair}.  In \cite[Section  2.4]{LOT}, a {\em $(G, \lambda)$-graded differential algebra}  is defined to be a differential algebra $(\alg, *, \del) $ together with a direct sum decomposition $$\alg=\bigoplus_{g \in G} \alg_{\langle g \rangle}$$ such that $$\del(\alg_{\langle g \rangle}) \subseteq \alg_{\langle \lambda^{-1}g \rangle}\ \text { and } \ \alg_{\langle g \rangle} * \alg_{\langle h \rangle} \subseteq \alg_{\langle gh \rangle}.$$ 
Observe that when $G=\zz$ and $\lambda=1$ we recover the usual notion of a $\zz$-grading.

There are two different ways of extending this definition to sequential algebra-modules, and both of them will be needed in this paper. The simplest extension is the following:
\begin {definition}
\label {def:pairgradedT}
Suppose $\A$ is a differential $\zz$-graded sequential 2-algebra, and $\T = \{\T_m\}$ a top algebra-module over it. Let $(G, \lambda)$ be a group pair. We say that $\T$ has a {\em $(G, \lambda)$-grading} if there are direct sum decompositions
$$ \T_m = \bigoplus_{g \in G  } \T_{m, \langle g \rangle}$$
for all $m \geq 0,$ such that 
$$\del(\T_{m, \langle g \rangle}) \subseteq \T_{m, \langle \lambda^{-1}g \rangle}, \ \ \T_{m,\langle g \rangle} * \T_{n, \langle h \rangle} \subseteq \T_{m+n,\langle gh \rangle }, \  \text{and } \  \T_{m, \langle g \rangle} \cdot \A_{m, \langle k \rangle} \subseteq \T_{m, \langle \lambda^{k} g \rangle}.$$
\end {definition}

\nid We can similarly define a grading of a bottom algebra-module by a group pair. If $(G_1, \lambda_1)$ and  $(G_2, \lambda_2)$ are group pairs, we can define their  amalgamated direct product as in \cite[Definition 2.5.9]{LOTbimodules}:
$$ G_1 \times_{\lambda} G_2 := G_1 \times G_2/ \langle \lambda_1 \lambda_2^{-1} \rangle.$$

The tensor of group-pair-graded top and bottom algebra-modules inherits a grading by the amalgamated direct product:
\begin {lemma}
\label {lem:fiberedpair}
Suppose $\A$ is a differential $\zz$-graded sequential 2-algebra.  If we have a $(G_1, \lambda_1)$-graded differential top algebra-module $\T$ over $\A$ and a  $(G_2, \lambda_2)$-graded bottom algebra-module $\B$ over $\A$, then the vertical tensor product $\T \odot_{\A} \B$ has an induced structure of $(G, \lambda)$-graded differential algebra, where $G = G_1 \times_{\lambda} G_2$ and $\lambda = [\lambda_1]=[\lambda_2] \in G.$ 
\end {lemma}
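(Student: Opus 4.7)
The plan is to put the obvious grading on $\T \odot_\A \B$ and then check that it is well-defined, multiplicative, and compatible with the differential. Define the grading of a pure tensor $\phi \odot \psi$, where $\phi \in \T_{m, \langle g_1 \rangle}$ and $\psi \in \B_{m, \langle g_2 \rangle}$, to be the class $[(g_1, g_2)] \in G = G_1 \times_\lambda G_2 = (G_1 \times G_2)/\langle \lambda_1 \lambda_2^{-1}\rangle$. Note that the centrality of $\lambda_1$ and $\lambda_2$ makes the subgroup $\langle \lambda_1 \lambda_2^{-1}\rangle$ central, hence normal, so $G$ is a well-defined group, and we set $\lambda := [\lambda_1] = [\lambda_2]$, which is central in $G$.

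First I would check well-definedness over the tensor relation, which is the one place the quotient really matters. For $a \in \A_{m, \langle k \rangle}$, the relation $(\phi \cdot a) \odot \psi = \phi \odot (a \cdot \psi)$ must preserve grading. By Definition~\ref{def:pairgradedT} applied to $\T$ and its analogue for $\B$, the left side lies in grading $[(\lambda_1^{k} g_1, g_2)]$ while the right side lies in grading $[(g_1, \lambda_2^{k} g_2)]$. The two representatives differ by $(\lambda_1^{k}, \lambda_2^{-k}) = (\lambda_1 \lambda_2^{-1})^{k}$, which is trivial in $G$, so the induced grading on the tensor product is indeed well-defined.

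Next I would verify multiplicativity. By Definition~\ref{def:odot}, the product on $\T \odot_\A \B$ is $(\phi \odot \psi) * (\phi' \odot \psi') = (\phi * \phi') \odot (\psi * \psi')$. For $\phi' \in \T_{n, \langle h_1\rangle}, \psi' \in \B_{n, \langle h_2\rangle}$, the $*$-compatibility of the $\T$- and $\B$-gradings gives $\phi * \phi' \in \T_{m+n, \langle g_1 h_1 \rangle}$ and $\psi * \psi' \in \B_{m+n, \langle g_2 h_2\rangle}$, so the product lies in grading $[(g_1 h_1, g_2 h_2)] = [(g_1, g_2)][(h_1, h_2)]$, as required. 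Finally, for the differential, $\del(\phi \odot \psi) = \del \phi \odot \psi + \phi \odot \del \psi$; the first summand lies in grading $[(\lambda_1^{-1} g_1, g_2)] = \lambda^{-1} [(g_1, g_2)]$ and the second in grading $[(g_1, \lambda_2^{-1} g_2)]$, which equals the first by the defining relation of $G$. Hence $\del$ lowers the grading by $\lambda$, completing the verification.

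The only conceptually delicate step is the first one, and it is delicate precisely because the quotient $G_1 \times_\lambda G_2$ has been designed to absorb exactly the discrepancy $(\lambda_1 \lambda_2^{-1})^k$; once that is observed, the remaining checks are routine bookkeeping entirely parallel to \cite[Lemma 2.5.10]{LOTbimodules}.
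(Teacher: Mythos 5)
Your proof is correct and is precisely the unpacking of definitions that the paper's one-line proof alludes to: the only non-obvious point is the well-definedness over the tensor relation $(\phi \cdot a) \odot \psi = \phi \odot (a \cdot \psi)$, which you correctly identify as being absorbed by the quotient $G_1 \times G_2 / \langle \lambda_1 \lambda_2^{-1}\rangle$. The remaining multiplicativity and differential checks are routine, exactly as you say.
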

\noindent This lemma follows immediately by unpacking the definitions involved.

An alternative definition of gradings on sequential algebra-modules is the following. We first define a {\em group triple} to be data of the form $(G, \lambda, \tau),$ where $G$ is a group, $\lambda \in G$ is a central element, and $\tau:G \to \zz$ is a group homomorphism such that $\tau(\lambda) = 0$. 

\begin {definition}
\label {def:gradedT}
Suppose $\A$ is a differential $\zz$-graded sequential 2-algebra, and $\T = \{\T_m\}$ is a top algebra-module over it. We say that $\T$ has a {\em grading by the group triple $(G, \lambda, \tau)$} if there are direct sum decompositions
$$ \T_m = \bigoplus_{\{g \in G \mid \tau(g) = m\} } \T_{m, \langle g \rangle}$$
for all $m \geq 0,$ such that 
$$\del(\T_{m, \langle g \rangle}) \subseteq \T_{m, \langle \lambda^{-1}g \rangle}, \ \ \T_{m,\langle g \rangle} * \T_{n, \langle h \rangle} \subseteq \T_{m+n,\langle gh \rangle }, \  \text{and } \  \T_{m, \langle g \rangle} \cdot \A_{m, \langle k \rangle} \subseteq \T_{m, \langle \lambda^{k} g \rangle}.$$
\end {definition}

\nid We can similarly define a grading by a group triple on a bottom algebra-module over $\A$.

If $(G_1, \lambda_1, \tau_1)$ and  $(G_2, \lambda_2, \tau_2)$ are group triples, we can combine the  amalgamated direct product construction over the $\lambda$'s  with the fiber product over the $\tau$'s to define
$$ G_1 \times_{\lambda, \tau} G_2 := \{(g_1, g_2) \in G_1 \times G_2 \mid \tau_1(g_1) = \tau_2(g_2) \}/\langle \lambda_1 \lambda_2^{-1} \rangle.$$

The tensor of group-triple-graded algebra-modules has a grading by the amalgamated direct product group pair:
\begin {lemma}
\label {lem:fibered}
Suppose $\A$ is a differential $\zz$-graded sequential 2-algebra.  If we have $(G_1, \lambda_1, \tau_1)$-graded differential top algebra-module $\T$ over $\A$ and a  $(G_2, \lambda_2, \tau_2)$-graded bottom algebra-module over $\A$, then the vertical tensor product $\T \odot_{\A} \B$ has an induced structure of $(G, \lambda)$-graded differential algebra, where $G = G_1 \times_{\lambda, \tau} G_2$ and $\lambda = [\lambda_1]=[\lambda_2].$ 
\end {lemma}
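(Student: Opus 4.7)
The plan is to mimic the proof of Lemma~\ref{lem:fiberedpair}, but to use the homomorphisms $\tau_i$ to match up the indexing by $m$. First, by Lemma~\ref{lem:deco}, we have
$$ \T \odot_\A \B = \prod_{m \geq 0} \bigl( \T_m \odot_{\A_m} \B_m \bigr), $$
so it suffices to define the grading locally on each summand $\T_m \odot_{\A_m} \B_m$ and then check compatibility with $*$, $\cdot$, and $\del$. For each $m \geq 0$, I would set
$$ \bigl( \T_m \odot_{\A_m} \B_m \bigr)_{\langle [g_1, g_2] \rangle} := \T_{m, \langle g_1 \rangle} \odot_{\A_m} \B_{m, \langle g_2 \rangle}, $$
for $(g_1, g_2) \in G_1 \times G_2$ with $\tau_1(g_1) = \tau_2(g_2) = m$, where $[g_1, g_2]$ denotes the class in $G = G_1 \times_{\lambda, \tau} G_2$.

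Next I would verify that this definition is well posed, i.e.\ compatible with the tensor relation over $\A_m$. For $\phi \in \T_{m,\langle g_1 \rangle}$, $\psi \in \B_{m, \langle g_2 \rangle}$ and $a \in \A_{m, \langle k \rangle}$, the grading hypothesis in Definition~\ref{def:gradedT} gives $\phi \cdot a \in \T_{m, \langle \lambda_1^k g_1 \rangle}$ and $a \cdot \psi \in \B_{m, \langle \lambda_2^k g_2 \rangle}$. The relation $(\phi \cdot a) \odot \psi = \phi \odot (a \cdot \psi)$ is then consistent with the grading precisely because $[\lambda_1^k g_1, g_2] = [g_1, \lambda_2^k g_2]$ in $G$, which holds by construction of the amalgamated product modulo $\lambda_1 \lambda_2^{-1}$. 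Note that both sides satisfy $\tau_1(\lambda_1^k g_1) = m = \tau_2(g_2)$ and $\tau_1(g_1) = m = \tau_2(\lambda_2^k g_2)$ since $\tau_i(\lambda_i) = 0$, so the $\tau$-constraint defining $G_1 \times_{\lambda, \tau} G_2$ is preserved.

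The remaining checks are routine: for the algebra multiplication on $\T \odot_\A \B$ from Definition~\ref{def:odot}, if $\phi \in \T_{m, \langle g_1 \rangle}, \phi' \in \B_{m, \langle g_2 \rangle}, \psi \in \T_{n, \langle h_1 \rangle}, \psi' \in \B_{n, \langle h_2 \rangle}$, then $(\phi*\psi) \in \T_{m+n, \langle g_1 h_1 \rangle}$ and $(\phi' * \psi') \in \B_{m+n, \langle g_2 h_2 \rangle}$, and the $\tau$-constraint is preserved because $\tau_i$ is a homomorphism so $\tau_1(g_1 h_1) = m+n = \tau_2(g_2 h_2)$. For the differential, $\del$ preserves the $m$-index on $\T_m$ and $\B_m$ (no $\tau$-obstruction) and shifts the $G_i$-grading by $\lambda_i^{-1}$, so its effect on $\T \odot_\A \B$ is to shift by $[\lambda_1^{-1}, 1] = [1, \lambda_2^{-1}] = \lambda^{-1}$, as required.

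The main (and really only) obstacle is the well-posedness check in the second paragraph: one must confirm that the $\tau$-fiber product condition is compatible with the $\lambda$-identification, so that the putative decomposition of $\T_m \odot_{\A_m} \B_m$ into $[g_1, g_2]$-graded pieces is actually consistent with the tensor relations. Once this is seen, everything else is formal, and the statement follows by taking the direct product over $m$ as in Lemma~\ref{lem:deco}.
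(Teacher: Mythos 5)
Your proof is correct and takes the intended route: the paper itself offers no details beyond ``this follows directly from the definitions,'' and your argument (decompose via Lemma~\ref{lem:deco}, grade each local summand $\T_m \odot_{\A_m} \B_m$ by $[g_1,g_2]$ with $\tau_1(g_1)=\tau_2(g_2)=m$, check that the tensor relations shift both coordinates by a power of $(\lambda_1,\lambda_2^{-1})$, then verify $*$ and $\del$) is precisely the routine unpacking that remark is gesturing at.
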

\noindent Again, this follows directly from the definitions.

Observe that if a sequential algebra-module has a $(G, \lambda, \tau)$-grading, it also has a $(G, \lambda)$-grading. In particular, if we are in the setting of Lemma~\ref{lem:fibered}, then we already know from Lemma~\ref{lem:fiberedpair} that $\T \odot_{\A} \B$ has a grading by $(G_1 \times_{\lambda} G_2, \lambda)$. However, Lemma~\ref{lem:fibered} is stronger, in that it guarantees that the values of the $(G_1 \times_{\lambda} G_2, \lambda)$-grading all lie in the subgroup $G_1 \times_{\lambda, \tau} G_2 \subset G_1 \times_{\lambda} G_2.$

Let us now turn our attention to concrete examples of gradings. In \cite[Section 3.3]{LOT},  the strands algebra $\alg(N)$ is equipped with a grading by $(G'(N), \lambda)$, where the group $G'(N)$ and the element $\lambda$ are as follows. We view $\a =\{1, \dots, N \}$ as a subset of the open interval $Z' = (1/2,N+1/2)$. For $p \in \a$ and $\alpha \in H_1(Z', \a) \cong \zz^{N-1},$ we define the multiplicity $m(\alpha, p)$ of $\alpha$ at $p$ to be the average of the local multiplicity of $\alpha$ just above $p$ and just below $p$. We can extend $m$ to a bilinear map $m: H_1(Z', \a) \times H_0(\a) \to \frac{1}{2}\zz.$ Let us also denote by $\delta: H_1(Z', \a) \to H_0(\a)$ the boundary map. We then define 
\begin {equation}
\label {eq:g'N}
 G'(N) = \{(k, \alpha) \mid k \in \tfrac{1}{2}\zz, \ \alpha \in H_1(Z', \a) \}
 \end {equation}
with the group structure
$$ (k_1, \alpha_1) \cdot (k_2, \alpha_2) = (k_1 + k_2 + m(\alpha_2, \delta \alpha_1), \alpha_1 + \alpha_2).$$
Inside $G'(N)$ we consider the index two subgroup $G''(N)$ generated by the elements $\lambda = (1,0)$ and $(-\frac{1}{2}, [i, i+1])$, for all $i \in \{1, \dots, N-1\}$.\footnote{Note that in \cite[Section 3.1]{LOTbimodules}, the group $G''(N)$ is denoted $G'(\zed).$} Alternately, we can think of $G''(N)$ as the $\zz$-central extension of $H_1(Z', \a)$ with the commutation relation 
$$ \tilde \alpha \tilde \beta = \tilde \beta \tilde \alpha \lambda^{2m(\beta, \delta \alpha)},$$
where $\alpha, \beta \in H_1(Z', \a)$ and $\tilde \alpha, \tilde \beta$ are lifts of $\alpha$ and $\beta$ to $G''(N)$.

A basis element $a = (S, T, \phi) \in \alg(N)$ has an associated homology class given by summing up the intervals corresponding to its strands:
$$[a] := \sum_{s \in S} [s, \phi(s)] \in H_1(Z', \a).$$
\nid We then define 
\begin {equation}
\label {eq:gr'}
  \gr'(a) = (\cro(a) - m([a], S), [a]) \in G'(N).
  \end {equation}
Observe that $\gr'(a)$ always lands in the subgroup $G''(N) \subset G'(N)$. It is proved in \cite[Proposition 3.19]{LOT} that $\gr'$ defines a $(G''(N), \lambda)$-grading on $\alg(N)$, with respect to the central element $\lambda = (1,0)$. 

We can give similar gradings to the strands algebra-modules $\T(N)$ and $\B(N)$ from
Section~\ref{sec:stam}. The gradings will be by group triples, as in Definition~\ref{def:gradedT}. We start with $\T(N)$, and use the $\zz$-grading $\cro$ on the 2-algebra $\nil$. For $Z'=(1/2, N+1/2)$ and $\a$ as above, we define $H_1^\T(Z', \a)$ to be the homology of the relative chain groups where we allow locally finite chains with (possibly non-compact) closed support, but we still require the intersection of their support with the interval $(1/2, N]$ 
to be compact. In other words, $H_1^\T$ is a hybrid between Borel-Moore homology (near the $N+1/2$ end of the interval $Z'$) and ordinary homology (near the $1/2$ end of $Z'$). We have
$$ H_1^\T(Z', \a) \cong \zz^N.$$ 
Indeed, compared to $H_1(Z', \a),$ the group $H_1^\T(Z', \a)$ has the class of $[N, N+1/2)$ as an additional generator. We still have maps $m: H_1^\T(Z', \a) \times H_0(\a) \to \frac{1}{2}\zz$ and $\delta: H_1^\T(Z', \a) \to H_0(\a)$. We define
$$ G'_\T(N) = \{(k, \alpha) | k \in \tfrac{1}{2}\zz, \ \alpha \in H_1^\T(Z', \a) \}$$
with the same multiplication rule as in $G'(N)$. We then let $G''_\T(N) \subset G'_\T(N)$ be the index two subgroup generated by  $\lambda = (1,0)$, $(-\frac{1}{2}, [i, i+1])$ for all $i \in \{1, \dots, N-1\}$, and also $(-\frac{1}{2}, [N, N+1/2))$. 

There is a group homomorphism
$$ \tau_\T : G''_\T(N) \to \zz,$$
which maps $(k, \alpha)$ to the multiplicity of $\alpha$ near $N+1/2$. The kernel of $\tau_{\T}$ is exactly $G''(N)$.

We define the homology class of a basis element $a = (S, T, \psi) \cdot \sigma \in \T(N)_m$ to be
$$ [a] = \sum_{t \in T} [\psi(t), t] + \sum_{s \in S \setminus \im(\psi)} [s, N+1/2) \ \in H_1^\T(Z', \a).$$
Note that $[a]$ does not depend on the nilCoxeter element $\sigma$, but only on $S, T,$ and $\psi.$ Set
$$\gr'(a) = (\cro(a) - m([a], S), [a]) \in G''_\T(N).$$
Clearly $\tau_\T(\gr'(a)) = m$ for $a \in \T(N)_m$. The same argument as in \cite[Proposition 3.19]{LOT} then shows that $\gr'$ defines a $(G''_\T(N), \lambda, \tau_\T)$-grading on $\T(N)$, with $\lambda = (1,0)$. 

For the bottom algebra-module $\B(N)$ over $\nil$, instead of $H_1^\T(Z', \a)$ we use a homology group $H_1^\B(Z', \a),$ defined using chains for which we require that the intersection of their support with the interval $[1, N + 1/2)$ be compact. We define $G''_\B(N)$ and $\gr'$ based on this, and $\tau_\B: G''_\B(N) \to \zz$ as the multiplicity of the homology class near $1/2$. Then $\B(N)$ is $(G''_\B(N), \lambda, \tau_\B)$-graded, with $\lambda = (1,0)$.

For $N, N' \geq 0$, we have an isomorphism
$$ G''(N+N') \cong G''_\T(N) \times_{\lambda, \tau} G''_\B(N'),$$
given by summing up the components in $\frac{1}{2}\zz$, and adding up homology classes for the intervals $(1/2, N+1/2)$ and $(N+1/2, N+N'+1/2).$ Here, the latter interval is identified with $(1/2, N'+1/2)$ using translation. We use the fact that (due to taking the fibered product over the $\tau$'s) we only have to add up relative homology classes that have the same multiplicity near $N + 1/2$. This implies that we can extend their sum over the point $N+1/2$ to arrive at a well-defined class in the (relative) homology of the whole interval $(1/2, N+N'+1/2)$. 

According to Lemma~\ref{lem:fibered}, the vertical tensor product $\T(N) \odot_{\nil} \B(N')$ is $(G''(N+N'), \lambda)$-graded. This is to be expected, because by Theorem~\ref{thm:bnt} the tensor product is isomorphic to $\A(N+N'),$ and we know that the latter algebra is $(G''(N+N'), \lambda)$-graded. In fact,  we have the following result, whose proof is straightforward:

\begin {proposition}
\label {prop:gradedbnt}
The isomorphism in Theorem~\ref{thm:bnt} respects the $(G''(N+N'), \lambda)$-gradings on the two sides.
\end {proposition}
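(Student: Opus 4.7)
The plan is to verify the grading identity on a basis element of $\T(N,k)_m \odot_{\nil_m} \B(N',k')_m$. Choose a representative with the nilCoxeter factor absorbed on the top side, so
$$x = (S,T,\psi)\cdot\sigma \in \T(N,k)_m, \qquad y = (S',T',\phi) \in \B(N',k')_m.$$
Under the isomorphism of Theorem~\ref{thm:bnt}, $x\odot y$ maps to
$$a = (S \cup (S'+N),\ T \cup (T'+N),\ \psi \cup_\sigma \phi) \in \alg(N+N').$$
Writing $[y]'$ for the translate of $[y]$ by $N$, the identification $G''(N+N') \cong G''_\T(N) \times_{\lambda,\tau} G''_\B(N')$ described before the proposition reduces the claim $\gr'(a) = \gr'(x) \cdot \gr'(y)$ (where $\cdot$ denotes the product in $G''(N+N')$) to three checks: $[a] = [x] + [y]'$ in $H_1(Z'_{N+N'}, \a_{N+N'})$; $\cro(a) = \cro(x) + \cro(y)$; and the multiplicity-correction identity for the $\tfrac{1}{2}\zz$-component.

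For the homology identity, direct expansion shows that $[a]$ and $[x] + [y]'$ agree on the compact contributions $\sum_{t \in T}[\psi(t), t]$ and $\sum_{s' \in S'}[s'+N, \phi(s')+N]$, and their difference on the remaining terms is
$$\sum_{s \in S \setminus \im(\psi)}[N+1/2, \bar w(s)] - \sum_{t' \in T' \setminus \im(\phi)}[N+1/2, t'+N],$$
which vanishes because $\bar w$ is a bijection between $S \setminus \im(\psi)$ and $(T' \setminus \im(\phi)) + N$, so the two sums range over the same family of intervals. The crossing identity $\cro(a) = \cro(x) + \cro(y)$ is inherited from Theorem~\ref{thm:bnt}: the diagram for $a$ is the concatenation of the diagrams for $x$ and $y$ glued along the $m$ strands at the interface, so the total number of crossings decomposes as the sum of those inside $x$ and those inside $y$.

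For the multiplicity corrections and the group-product correction term, the key observation is that $[x]$ has support in $(1/2, N+1/2]$ while $[y]'$ has support in $[N+1/2, N+N'+1/2)$. Hence $m([x], S'+N) = 0$ and $m([y]', S) = 0$, giving
$$m([a], S \cup (S'+N)) = m([x], S) + m([y]', S'+N) = m([x], S) + m([y], S').$$
Since the product in $G''(N+N')$ is $(k_1, \alpha_1)\cdot(k_2, \alpha_2) = (k_1 + k_2 + m(\alpha_2, \delta\alpha_1), \alpha_1 + \alpha_2)$, one also needs $m([y]', \delta[x]) = 0$; but $\delta[x]$ is a $0$-chain supported in $\a_N \subset (1/2, N+1/2)$, where $[y]'$ has zero local multiplicity. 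Combining these inputs yields
$$\gr'(a) = \bigl(\cro(x) + \cro(y) - m([x], S) - m([y], S'),\ [x] + [y]'\bigr) = \gr'(x) \cdot \gr'(y).$$
The only nontrivial step is the geometric matching of the chains at the interface $N + 1/2$ in the homology identity; the remaining checks are a mechanical unpacking of the grading definitions.
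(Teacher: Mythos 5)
The paper does not supply an argument for this proposition; it simply declares it "straightforward" after establishing the isomorphism $G''(N+N') \cong G''_\T(N) \times_{\lambda,\tau} G''_\B(N')$. Your write-up is a correct filling-in of that omitted verification, and its three checks---the homology identity $[a]=[x]+[y]'$ (using that $\bar w$ biject$s$ the two families of half-open intervals at $N+1/2$), the additivity of the crossing count, and the vanishing of the cross-terms $m([x],S'+N)=m([y]',S)=m([y]',\delta[x])=0$---are exactly what the paper is implicitly asking the reader to confirm. One small caveat on phrasing: $\gr'(x)$ and $\gr'(y)$ live in $G''_\T(N)$ and $G''_\B(N')$ respectively, not in $G''(N+N')$, so "$\gr'(x)\cdot\gr'(y)$" is not literally a product in $G''(N+N')$; what you actually verify is that $\gr'(a)$ agrees with the image of the class $[(\gr'(x),\gr'(y))]$ under the isomorphism with the fiber product, and the vanishing of $m([y]',\delta[x])$ is the content of that isomorphism being a group homomorphism. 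The computation itself is sound.
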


\section {Bordered and cornered Heegaard diagrams}
\label {sec:bchd}

Heegaard diagrams are a convenient way to encode the data determining a 3-manifold. Similar techniques are available for describing 3-manifolds with boundary or with corners, and also for describing 2-manifolds with boundary. In this section we present some of these descriptions. Our motivation for doing so is twofold. First, matching intervals (used to describe 2-manifolds with boundary) make an appearance in Section~\ref{sec:match}, where we prove a decomposition theorem for the algebra associated to a surface in bordered Floer homology. Second, the definition of split cornered Heegaard diagrams (used to describe 3-manifolds with a codimension-two corner) serves as motivation for the four quadrant splitting of a planar grid diagram in Sections~\ref{sec:cpa} and \ref{sec:cpd}.

\subsection{Heegaard diagrams and bordered Heegaard diagrams}
Let $Y$ be a closed, connected, oriented $3$-manifold.  A self-indexing Morse function $f$ on $Y$, having unique index $0$ and index $3$ critical points, determines a Heegaard diagram for $Y$ as follows.  The Heegaard surface is the level surface $\Sigma := f^{-1}(3/2)$, the collection of $\alpha$ curves is the intersection of $\Sigma$ with the unstable manifolds of the index 1 critical points of $f$, and the collection of $\beta$ curves is the intersection of $\Sigma$ with the stable manifolds of the index 2 critical points of $f$. (See \cite[Section 2.1]{HolDisk} for more details.) We denote by $\aalpha$ the collection of $\alpha$ curves and by $\bbeta$ the collection of $\beta$ curves. We then write $Y = Y[\Sigma, \aalpha, \bbeta].$ 

The analogous construction for a $3$-manifold with connected boundary $(Y, \partial Y)$ is the following---see \cite[Section 4.1]{LOT} for details.  Choose a Riemannian metric on $Y$ such that $\del Y$ is totally geodesic. Then choose a self-indexing Morse function $f$ on $Y$ whose gradient is tangent to $\partial Y$, with unique index 0 and index 3 critical points both on $\partial Y$ that are coincident with the unique index 0 and index 2 critical points of $f |_{\partial Y}$, such that the index 1 critical points of $f |_{\partial Y}$ are also index 1 critical points of $f$.  Given such a Morse function, consider the level surface $\Sigma := f^{-1}(3/2)$ together with 
\begin{enumerate}[\quad (1)]
\item the collection $\bbeta$ of curves given by the intersection of $\Sigma$ with the stable manifolds of the index 2 critical points of $f$; 
\item the collection $\aalpha^c$ of curves given by the intersection of $\Sigma$ with the unstable manifolds of the index 1 critical points of $f$ that are in the interior of $Y$; 
\item the collection $\aalpha^a$ of arcs given by the intersection of $\Sigma$ with the unstable manifolds of the index 1 critical points of $f$ that are on the boundary of $Y$.
\end{enumerate}
The triple $(\Sigma, \ol{\aalpha} = \aalpha^a \cup \aalpha^c, \bbeta)$ is exactly the data of a {\em bordered Heegaard diagram} (\cite{LOT}, Def. 4.2). We write $Y=Y[\Sigma, \ol{\aalpha}, \bbeta]$.

\subsection{Matched circles and matched intervals}
Notice that given a 3-manifold with connected boundary $(Y, \partial Y)$ and a self-indexing Morse function as above, the pair $(\partial Y \cap \Sigma, \partial Y \cap \aalpha^a)$ is a circle together with a collection of points arranged in pairs according to which $\alpha^a$ arcs they live on.  This is exactly the data of a matched circle:
\begin{definition}[\cite{LOT}, Def. 3.9]
A \emph{matched circle} is a triple $\zed=(Z, \a, M) $ consisting  of an oriented circle $Z$, a collection $\a= \{a_1, \ldots, a_{4k}\}$ of distinguished points on $Z$, and a  $2$-to-$1$ matching function $M: \a \to [2k]$, which describes that the points $a_i$ and $a_j$ are paired with each other, whenever $M(i)=M(j)$. We require that surgery along these $2k$ pairs of points yields a single circle.
\end{definition}
A matched circle is the surface analogue of a Heegaard diagram: surgery on the pairs of points provides a cobordism from a circle to a circle; capping this cobordism with two discs provides a closed surface $F: = F[Z, \a, M]$.  Conversely, given a surface $F$ with a self-indexing Morse function $f$ with unique index 0 and index 2 critical points, the level manifold $Z := f^{-1}(3/2)$ together with the intersection of $Z$ with the unstable manifolds of the index 1 critical points is a matched circle encoding $F$.

We now describe the analogous notion for surfaces with connected boundary.

\begin{definition}
A \emph{matched interval} is a triple $\cI=(Z, \a, M) $ of an oriented closed interval $Z$, $4k$ points $\a = \{a_1, \dots, a_{4k} \}$ in the interior of $Z$, and a {\em matching}, a $2$-to-$1$ function $M: \a \to [2k]$. We require that performing surgery along the $2k$ matched pairs of points yields a single interval. 
\end {definition}
\noindent See Figure~\ref{fig:matchint} for a picture of a matched interval.
\begin{figure}[ht]
\includegraphics{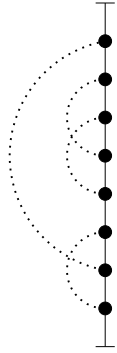}
\caption{{\bf A matched interval.} The matchings are indicated by dotted lines.} \label{fig:matchint}
\end{figure}

A matched interval might be called a ``bordered surface Heegaard diagram", as it encodes the data of a surface with (parametrized) circle boundary.  Specifically, taking a matched interval and performing surgery on all the pairs of points provides a cobordism from an interval to an interval. We can view that cobordism as a surface $F:=F[Z, \a, M]$ with circle boundary.  The Morse theory viewpoint is as follows.  Given a surface $(F, \partial F)$ with circle boundary, choose a Riemannian metric on $F$ such that $\del F$ is geodesic. Then choose a self-indexing Morse function $f$ on $F$, with gradient tangent to the boundary, such that $f$ has unique index 0 and index 2 critical points, which coincide with the unique index 0 and index 1 critical points of $f |_{\partial F}$. The level set $Z := f^{-1}(3/2)$, together with the intersection of $Z$ and the unstable manifolds of the index 1 critical points of $f$, is a matched interval encoding the surface $(F, \partial F)$.  See Figure~\ref{fig:bhdc} for an example.
\begin{figure}[ht]
\includegraphics[scale=1]{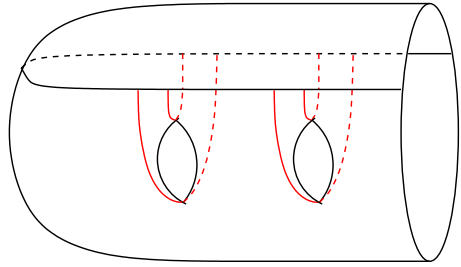}
\caption{{\bf A matched interval and its associated bordered surface.} We depict a surface of genus two with circle boundary, and the embedded matched interval resulting from the height Morse function. The matching intervals are drawn in red.} \label{fig:bhdc}
\end{figure}

\subsection{Cornered Heegaard diagrams}
A 3-manifold with connected boundary and a connected codimension-two corner can also be understood in terms of Heegaard diagrams.
\begin{definition} \label{def-chd}
A \emph{cornered Heegaard diagram} is a quadruple $(\Sigma, P, \ol\aalpha, \bbeta)$, as follows.
\begin{enumerate}[\quad (1)]
\item $\Sigma$ is a surface of genus $g$ with connected boundary $Z$;
\item $P$ is an embedded 0-sphere in $Z$;
\item $\bbeta = \{\beta_1, \ldots \beta_g\}$ is a collection of pairwise-disjoint circles in the interior of $\Sigma$;
\item $\ol\aalpha$ is a collection of pairwise disjoint curves in $\Sigma$, divided into two collections:
\begin{enumerate}[\quad (a)] 
\item $\aalpha^c = \{\alpha^c_1, \ldots, \alpha^c_{g-k}\}$, a collection of circles in the interior of $\Sigma$, and 
\item $\aalpha^a = \{\alpha^a_1, \ldots, \alpha^a_{2k}\}$, a collection of arcs with boundary in $Z \bs P$.
\end{enumerate}
\end{enumerate}
We require that the $\aalpha^a$ arcs intersect $Z$ transversely; that the boundary of every arc $\alpha^a_i$ is contained in a single component of $Z \bs P$; that the intersections of all the curves are transverse; and that $\Sigma \bs \ol\aalpha$ and $\Sigma \bs \bbeta$ are connected.
\end{definition}

\noindent Note that the definition implies that each component of $Z \bs P$ is a matched interval. See Figure~\ref{fig:hdc} for an illustration of a cornered Heegaard diagram.
 \begin{figure}[ht]
\begin{center}
\input{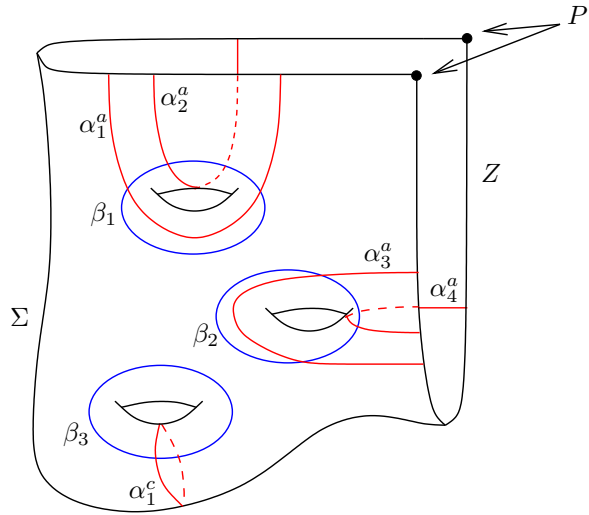}
\end{center}
\caption {{\bf A cornered Heegaard diagram.} Here $g=3$ and $k=2$. The alpha curves are drawn in red, and the beta curves in blue. The diagram represents the genus two handlebody, with a corner circle splitting the boundary into two genus one surfaces.}
\label{fig:hdc}
\end{figure}

Given a 3-manifold $Y$ with connected boundary $\partial Y$ split along a corner circle $S \subset \partial Y$, choose a Riemannian metric on $Y$ such that the two components of $\del Y \setminus S$  and the circle $S$ are totally geodesic. Then choose a self-indexing Morse function $f$ on $Y$ whose gradient is tangent to $\partial Y$ and to $S$.  Choose $f$ such that it has unique index 0 and index 3 critical points coinciding with the unique index 0 and index 2 critical points of $f |_{\partial Y}$ and also coinciding with the unique index 0 and index 1 critical points of $f |_{S}$.  Moreover insist that the index 1 critical points of $f |_{\partial Y}$ are also index 1 critical points of $f$.  This can all be arranged by first defining $f$ on $S,$ then extending it to $\del Y,$ and finally to $Y$. In this situation, the following data is a cornered Heegaard diagram for $Y$: 
\begin{enumerate}[\quad (1)]
\item the level surface $\Sigma := f^{-1}(3/2)$;
\item $P = \Sigma \cap S$;
\item $\bbeta$ is the intersection of $\Sigma$ with the stable manifolds of the index 2 critical points;
\item[(4a)] $\aalpha^c$ is the intersection of $\Sigma$ with the unstable manifolds of the index 1 critical points in the interior of $Y$;
\item[(4b)] $\aalpha^a$ is the intersection of $\Sigma$ with the unstable manifolds of the index 1 critical points on the boundary of $Y$.
\end{enumerate}

Conversely, starting from a cornered Heegaard diagram $(\Sigma, P, \ol\aalpha, \bbeta)$, by doing surgery  along all the circles and arcs and then attaching 3-balls appropriately, one can construct a 3-manifold $Y :=  Y(\Sigma, P, \ol\aalpha, \bbeta)$ with connected boundary split along  a corner circle.

\begin{remark}
The data $(\Sigma, P, \ol\aalpha, \bbeta)$ of a cornered Heegaard diagram contains in particular the data $(\Sigma, \ol\aalpha, \bbeta)$ of a bordered Heegaard diagram.  Forgetting the embedded 0-sphere $P$ corresponds to smoothing the corner of the 3-manifold with corner $Y[\Sigma, P, \ol\aalpha, \bbeta]$ to obtain a 3-manifold with boundary $Y[\Sigma, \ol\aalpha, \bbeta]$.
\end{remark}

Note that in a cornered Heegaard diagram, as in Definition~\ref{def-chd}, there are no ``$\beta$ arcs"; that is, all the $\beta$ curves are closed, and only $\alpha$ curves intersect the boundary of the surface.  We restrict the notion that way so that it corresponds well with the notion of bordered Heegaard diagram from ~\cite{LOT}---in particular, so that ``smoothing the corner" of a cornered Heegaard diagram produces a bordered Heegaard diagram in the sense of \cite{LOT}.  However, there is a more general notion of bordered Heegaard diagram in which both $\alpha$ and $\beta$ curves are allowed to intersect the boundary.  Similarly, there is a notion of cornered Heegaard diagram that permits both $\alpha$ and $\beta$ curves intersecting the boundary. A particular case of this more general notion is the following:

\begin{definition} \label{def-schd}
A \emph{split cornered Heegaard diagram} is a quadruple $(\Sigma, P, \ol\aalpha, \ol\bbeta)$, as follows.
\begin{enumerate}[\quad (1)]
\item $\Sigma$ is a surface of genus $g$ with connected boundary $Z$;
\item $P$ is an embedded 0-sphere in $Z$, splitting $Z$ into two intervals $Z_{\alpha}$ and $Z_{\beta}$;
\item $\ol\aalpha$ is a collection of pairwise disjoint curves in $\Sigma$, divided into two collections:
\begin{enumerate}[\quad (a)] 
\item $\aalpha^c = \{\alpha^c_1, \ldots, \alpha^c_{g-k}\}$, a collection of circles in the interior of $\Sigma$, and 
\item $\aalpha^a = \{\alpha^a_1, \ldots, \alpha^a_{2k}\}$, a collection of arcs with boundary in the interior of $Z_{\alpha}$;
\end{enumerate}
\item $\ol\bbeta$ is a collection of pairwise-disjoint curves in $\Sigma$, divided into two collections:
\begin{enumerate}[\quad (a)] 
\item $\bbeta^c = \{\beta^c_1, \ldots, \beta^c_{g-l}\}$, a collection of circles in the interior of $\Sigma$, and 
\item $\bbeta^a = \{\beta^a_1, \ldots, \beta^a_{2l}\}$, a collection of arcs with boundary in the interior of $Z_{\beta}.$
\end{enumerate}
\end{enumerate}
We require that the $\aalpha^a$ and $\bbeta^a$ arcs intersect $Z$ transversely; that the intersections of all the curves are transverse; and that $\Sigma \bs \ol\aalpha$ and $\Sigma \bs \ol\bbeta$ are connected.
\end{definition}

\begin{figure}[ht]
\begin{center}
\input{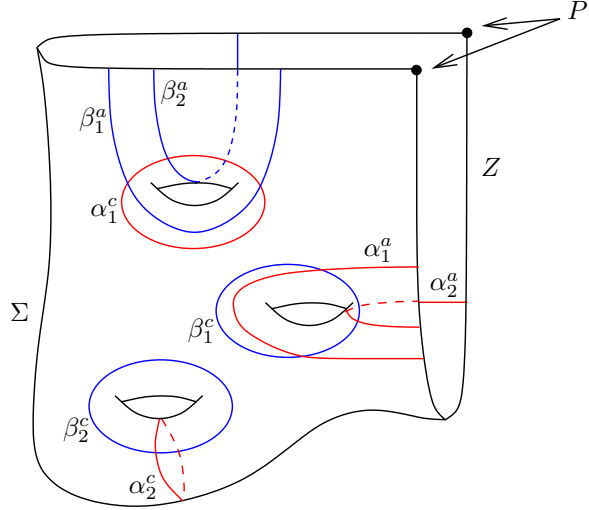}
\end{center}
\caption {{\bf A split cornered Heegaard diagram.} Here $g=3, k=1$ and $l=1.$ This diagram  represents again a genus two handlebody, with a corner circle splitting the boundary into two genus one surfaces.}
\label{fig:sdc}
\end{figure}

The Morse theory perspective on split cornered Heegaard diagrams is entirely analogous to that for cornered Heegaard diagrams, except that we ask for the index 1 critical points of the function $f |_{\partial Y}$ to be either index 1 or index 2 critical points of $f$. Recall that the circle $S$ splits the boundary into two pieces. We denote these components now by $\del_{\alpha}Y$ and $\del_{\beta}Y$, and ask for $\del_{\alpha}Y$ to have only index 1 critical points  of $f$, and for $\del_{\beta}Y$ to have only index 2 critical points of $f$. The unstable manifolds of the index 1 critical points of $f$ that lie on $\partial_{\alpha} Y$ intersect the level surface $\Sigma$ in the $\alpha^a$ arcs, as before, and now the stable manifolds of the index 2 critical points of $f$ that lie on $\partial_{\beta} Y$ intersect $\Sigma$ in the $\beta^a$ arcs. 

To summarize, we can represent a 3-manifold with connected boundary and a circle corner by a cornered Heegaard diagram, or alternatively by a split cornered Heegaard diagram. Both of these notions are candidates for a framework for a cornered Floer homology of 3-manifolds. A cornered Heegaard diagram has the nice feature that smoothing the corner produces a bordered Heegaard diagram. However, the double splitting of planar diagrams that occupies our attention in sections~\ref{sec:vslice},~\ref{sec:cpa}, and ~\ref{sec:cpd} is most closely analogous to the use of split cornered Heegaard diagrams.  

\begin {remark}
In the context of Heegaard Floer homology (say, for closed 3-manifolds), one also has to choose a basepoint $z$ on the Heegaard surface $\Sigma,$ in the complement of all the $\alpha$ and $\beta$ curves. The result is a {\em pointed Heegaard diagram} $(\Sigma, \aalpha, \bbeta, z)$, which represents the pointed 3-manifold $(Y, z)$. Similarly, in bordered Floer homology one chooses a basepoint $z$ on $\del Y \cap \Sigma$, in the complement of the alpha arcs, and the resulting bordered pointed Heegaard diagram represents a 3-manifold $Y$ with boundary and a basepoint on the boundary. We can define cornered (and split cornered) pointed Heegaard diagrams in a similar fashion, by requiring the basepoint $z$ to be one of the two points in $P \subset S$. There are also notions of pointed matched circle and pointed matched interval; for the latter, the basepoint is required to be one of the two endpoints of the interval.
\end {remark}

\subsection{Gluing matched intervals and cornered Heegaard diagrams}
Given two matched intervals $(Z, \a, M)$ and $(Z', \a', M')$, with boundaries $\partial Z = \{\partial Z^+, \partial Z^-\}$ and $\partial Z' = \{\partial Z'^+, \partial Z'^-\}$, we can glue them together by identifying $\partial Z^+$ with $\partial Z'^-$ and $\partial Z^-$ with $\partial Z'^+$, to form a matched circle.  This operation corresponds to gluing the associated surfaces $F[Z, \a, M]$ and $F[Z', \a', M']$ along their boundaries.  

Let $(\Sigma, P, \ol\aalpha, \bbeta)$ and $(\Sigma', P', \ol\aalpha', \bbeta')$ be cornered Heegaard diagrams, with $Z := \partial \Sigma$ and $Z' := \partial \Sigma'$.  Let $(Z \bs P)_1$ and $(Z \bs P)_2$ denote the two components of $Z \bs P$, and similarly for $(Z' \bs P')_1$ and $(Z' \bs P')_2$; all of the $(Z \bs P)_i$ and $(Z' \bs P')_i$ have the structure of matched intervals.  Suppose $\phi: (Z \bs P)_2 \ra (Z' \bs P')_1$ is an orientation-reversing isomorphism of matched intervals.  Then we can glue together $\Sigma$ and $\Sigma'$ along $\phi$ to form a new cornered Heegaard diagram $(\Sigma \cup_\phi \Sigma', P, \ol\aalpha'', \bbeta'')$:
\begin{itemize}
\item $\ol\aalpha'' = {\aalpha^c}'' \cup {\aalpha^a}''$;
\item ${\aalpha^c}''$ is the union of three collections, namely $\aalpha^c$, ${\aalpha^c}'$, and the gluing along $\phi$ of those $\alpha^a$ and ${\alpha^a}'$ curves with boundary on $(Z \bs P)_2$ and $(Z' \bs P')_1$ respectively; 
\item ${\aalpha^a}''$ is the union of those $\alpha^a$ and ${\alpha^a}'$ curves with boundary on $(Z \bs P)_1$ and $(Z' \bs P')_2$; 
\item $\bbeta''$ is the union of $\bbeta$ and $\bbeta'$.  
\end{itemize}
Of course, we can forget the embedded 0-sphere $P$ in $\partial (\Sigma \cup_\phi \Sigma')$ to obtain a bordered Heegaard diagram. At the level of $3$-manifolds, the above gluing of cornered Heegaard diagrams corresponds to the decomposition
\begin {equation}
\label {eq:dec3}
Y[\Sigma \cup_\phi \Sigma', P, \ol\aalpha'', \bbeta''] = Y[\Sigma, P, \ol\aalpha, \bbeta] \cup_{F} Y[\Sigma', P', \ol\aalpha', \bbeta'],
\end {equation}
where $F$ is the surface (with boundary) constructed from the matched interval $(Z\setminus P)_2.$

We can also glue together two split cornered Heegaard diagrams along their $Z_\beta$ boundary components to obtain a bordered Heegaard diagram. This corresponds to a decomposition of a 3-manifold with boundary of the same form as \eqref{eq:dec3}.

\section {Slicing the matching algebra}
\label {sec:match}

The main goal of this section is to prove Theorem~\ref{thm:main1} from the Introduction, giving a decomposition of the
matching algebra of a pointed matched circle as a tensor product of
algebra-modules over the nilCoxeter sequential 2-algebra.

\subsection {The algebra of a matched circle}
\label {sec:matchalg}

We review the definition of the algebra $\alg(\zed)$ associated to a pointed matched circle, from \cite[Section 3.2]{LOT}.  Recall that a pointed matched circle $\zed$ is a matched circle $(Z, \a = \{a_1, \ldots, a_{4k}\}, M: \a \ra [2k])$ together with a choice of basepoint $z \in Z \bs \a$.  Beginning at the basepoint and following the orientation of the circle provides an ordering on the points $\a$, say $a_1 < \cdots < a_{4k}$.  This ordering allows us, in particular, to relate the algebra associated to a matched circle to the strands algebra on the positions $a_1, \ldots, a_{4k}$.

The matching algebra $\alg(\zed)$ is defined as a subalgebra of the strands algebra $\alg(4k)$, more specifically as a subalgebra of $\bigoplus_{i=0}^{2k} \alg(4k,i) \subset \alg(4k)$.  For a subset $s \subset [2k]$ of the pairs of matched points, a set $S \subset \a$ of points is called a section of the matching $M: \a \ra [2k]$ over $s$ if $M$ maps $S$ bijectively onto $s$.  Let $I_S \in \alg(4k,i)$ be the primitive idempotent on $S$, that is the identity element on the subset $S \subset \a$ in the strands algebra; here $i = |S|$.  Set 
\begin{align}
\label {eq:is}
I(s) &:= \sum_{\text{sections } S \text{ over } s} I_S,  \\
\label {eq:is2} I &:= \sum_{s \subset [2k]} I(s). 
\end{align}
Let $I(\zed)$ denote the algebra generated by the elements $I(s)$, for $s \subset [2k]$.  The algebra $\alg(\zed)$ is generated as an algebra by $I(\zed)$ and the elements 
$$a(\{[i_1,j_1], \ldots, [i_s,j_s]\}) := I * \rho_{\{[i_1,j_1], \ldots, [i_s,j_s]\}} * I$$ 
for all consistent sets of Reeb chords $\{[i_1,j_1], \ldots, [i_s,j_s]\}$.  In fact, the nonzero elements in the set 
$$\{I(s) * a(\{[i_1,j_1], \ldots, [i_s,j_s]\}) \: | \: s \subset [2k], \{[i_1,j_1], \ldots, [i_s,j_s]\} \text{ consistent}\}$$
form a $\k$-basis for the algebra $\alg(\zed)$.

An example of such a basis element is 
$$\includegraphics[scale=0.7]{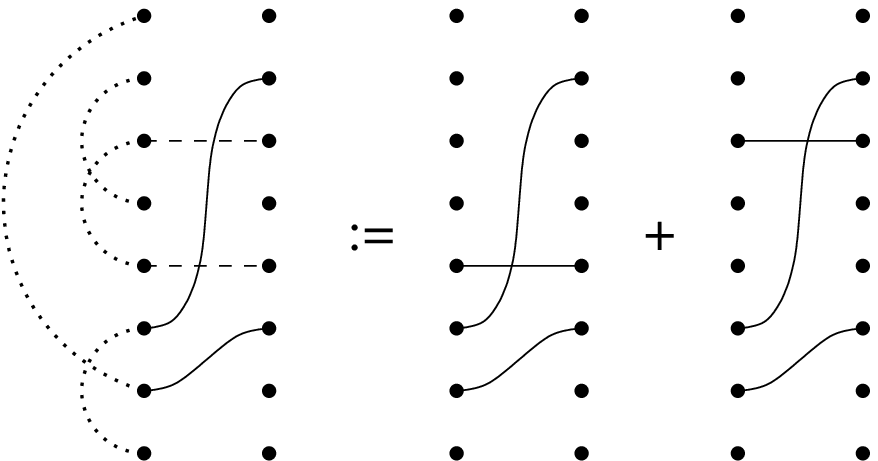}$$

We can describe the elements of the matching algebra more directly as follows.  Recall that a $\k$-basis for the strands algebra $\alg(4k)$ is given by the triples $(S, T, \phi),$ where $S$ and $T$ are subsets of $[4k] = \{1, \dots, 4k\}$, and $\phi: S \to T$ is a bijection satisfying $i \leq \phi(i)$ for all $i \in S$; these elements are conceived of as collections of upward-veering strands connecting position $i$ to position $\phi(i)$.  An element $$m := \sum_{r \in R} (S_r, T_r, \phi_r) \in \alg(4k)$$ of the strands algebra on the positions $a_1, \ldots, a_{4k}$ is an element of the matching algebra $\alg(\zed)$ if and only if  \begin{enumerate}[\quad(a)] \item for all $r \in R$, the set $S_r$ is a section of the matching over $M(S_r)$, \item for all $r \in R$, the set $T_r$ is a section of the matching over $M(T_r)$, and \item if $(S_r, T_r, \phi_r)$ is a summand of $m$ with $\phi_r(i) = i$ for some $i$, then for $i'$ the position matched to $i$, the element $m$ also contains the summand $(S_r', T_r', \phi_r')$ where $S_r' = i' \cup S_r \bs \{i\}$, $T_r' = i' \cup T_r \bs\{ i\}$, and $\phi_r'(j) = \phi_r(j)$ for $j \in S_r \bs \{i\}$ and $\phi_r'(i') = i'$.  \end{enumerate} The first two conditions are that no two chords start or end on matched positions, and the third condition is that the element is invariant under swapping horizontal chords between matched positions.  In these terms a $\k$-basis for the matching algebra $\alg(\zed)$ is given by the sums $\sum_{r \in R} (S_r, T_r, \phi_r) \in \alg(4k)$ satisfying the following conditions:  \begin{enumerate}[\quad(a)] \item as before $S_r$ is a section over $M(S_r)$ and $T_r$ is a section over $M(T_r)$; \item there is a subset $\cc \subset [4k]$ contained in $S_r$ for every $r$, such that  \begin{itemize} \item all the $\phi_r$ agree on $\cc$, and $\phi_r(i) > i$ for $i \in \cc$,  \item $\phi_r(i) = i$ for all $i$ and $r$ with $i \in S_r$ and $i \nin \cc$, \item the cardinality of the index set $R$ is exactly $2^{|S_1 \bs \cc|}$ and the $2^{|S_1 \bs \cc|}$ summands are exactly all the possible iterated horizontal chord swaps of the summand $(S_1, T_1, \phi_1)$---that is, there is one summand $(S \cup \cc, S \cup \phi_1(\cc), \id_S \cup \phi_1 |_{\cc})$ for each section $S$ of the matching over $M(S_1 \bs \cc)$.\end{itemize} \end{enumerate}

As noted in  \cite[Section 3.3]{LOT}, the differential algebra $\alg(\zed)$ does not admit a $\zz$-grading.  It does have a group grading by the group $G''(4k)$ from Section~\ref{sec:ncgrading}, and in fact $\alg(\zed)$ is also graded by a smaller group $G(\zed) \subset G''(4k)$, defined as follows. In the construction of $G''(4k)$ in Section~\ref{sec:ncgrading} we used the interval $Z' = (1/2, 4k+1/2)$. We identify $Z'$ with the complement of the basepoint $z$ in the circle $Z$. Let $H$
denote the kernel of $M_{\ast} \circ \delta : H_1(Z', \a) \ra
H_0([2k])$, and let $F =
F(Z,\a,M)$ be the surface associated to the matched circle.
We can identify $H$ with $H_1(F)$ as follows: given an
indecomposable class $h \in \ker (M_{\ast} \circ \delta)$, the corresponding
class in $H_1(F)$ is the union of $h$ with the core of the 1-handle of $F$ connecting the
two points of $\partial h$. Thus, we get an inclusion $i_*: H_1(F) \to H_1(Z', \a)$. We let $G(\zed)$ be the subgroup of $G''(4k)$ consisting of the pairs $(j, \alpha)$ with $j \in \frac{1}{2}\zz$ and $\alpha \in \im(i_*)$. 

We take $\lambda = (1,0)$ as the distinguished central element. To define a $(G(\zed), \lambda)$ grading $\gr_\psi$ on $\alg(\zed)$, we need to pick additional {\em grading refinement data}, as in \cite[Definition 3.8]{LOTbimodules}. Grading refinement data for $\alg(\zed)$ consists of a function
$$ \psi: \{s \subset [2k]\}  \to G''(4k)$$ 
such that $\psi(s) g' \psi(t)^{-1} \in G(\zed)$ whenever $g' =(k, \alpha) \in G''(4k)$ satisfies $M_*\delta(\alpha) = t - s.$ Given a function $\psi$ like this, we set
$$ \gr_\psi(I(s) * a * I(t)) = \psi(s) \cdot \gr'(a) \cdot \psi(t)^{-1},$$
for any generator $a=I_S * \rho_{\{[i_1,j_1], \ldots, [i_q,j_q]\}} * I_T \in \alg(4k)$ such that $I(s) * a * I(t) \neq 0$ in $\alg(\zed)$ (that is, such that $S$ and $T$ are sections over $s$ and $t$, respectively). Here $\gr'$ denotes the grading on generators defined in Equation~\eqref{eq:gr'}. We note that grading refinement data can be specified by choosing, for each $i=0, \dots, 2k$, a base subset $t_i \subset [2k]$ with $|t_i| = i$, and then defining $\psi(s)$ such that $M_*\delta(\psi(s)) = s-t_i$, for any $s \subset [2k]$ with $|s|=i$. We refer the reader to \cite[Section 3.3.2]{LOT} and  \cite[Section 3.1]{LOTbimodules} for more details.

\subsection {The algebra-modules of a matched interval}
\label {sec:matchalgmod}

Given a pointed matched circle split into two matched intervals, as in Figure~\ref{fig:splitcircle}, we can reconstruct the matching algebra of the circle as a tensor product, over the nilCoxeter sequential 2-algebra, of algebra-modules associated to the two matched intervals.
\begin{figure}[ht]
\begin{center}
\input{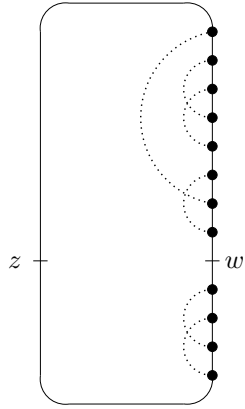}
\end{center}
\caption {{\bf A pointed matched circle split into two matched intervals.} }
\label{fig:splitcircle}
\end{figure}

As before, let $\zed = (Z, \a, M)$ be a matched circle with a basepoint $z \in Z \bs \a$ and a splitting point $w \in Z \bs \a$ such that both components of $Z \bs \{z,w\}$, with the corresponding points of $\a$, are matched intervals.  Denote those two matched intervals $\cI_1 = (Z_1, \a_1, M_1)$ and $\cI_2 = (Z_2, \a_2, M_2)$, and suppose $Z_1$ is oriented from $z$ to $w$ and $Z_2$ is oriented from $w$ to $z$.  In particular, $Z = Z_1 \cup_{\{z,w\}} Z_2$, $\a = \a_1 \cup \a_2$, and $M = M_1 \cup M_2$.  Let $N_1=4k_1$ be the order of $\a_1$ and let $N_2=4k_2$ be the order of $\a_2$. Let also $F_1 = F(\zed_1)$ and $F_2 = F(\zed_2)$ be the corresponding surfaces with circle boundary.

We associate to the splitting point $w$ the nilCoxeter sequential 2-algebra $\nil$.  To the matched interval $\zed_1$ (with distinguished basepoint $z$) we associate a top algebra-module $\T(\zed_1) = \{T(\zed_1)_m\}_{m \geq 0}$ over $\nil$.  For $m > 2k_1,$ the component $\T(\zed_1)_m$ is zero. For $m \leq 2k_1$, we let 
$$ \T(\zed_1)_m = \bigoplus_{l=0}^m \T(\zed_1, l)_m, $$
where each $\T(\zed_1, l)_m$ is free as a right $\nil_m$-module, with a basis given by formal sums $\sum_{r \in R} (S_r, T_r, \psi_r)$, where $S_r$ and $T_r$ are subsets of $\a_1$ of cardinalities $l$ and $l-m$, and $\psi_r : T_r \ra S_r$ is an injective map with $i \geq \psi(i)$ for all $i \in T_r$, satisfying the conditions: 
\begin{enumerate}[\quad (a)]
\item $S_r$ is a section of the matching over $M_1(S_r)$, and $T_r$ is a section of the matching over $M_1(T_r)$; 
\item there is a subset $\cc \subset \a_1$ contained in $T_r$ for every $r$, such that 
\begin{itemize}
\item all the $\psi_r$ agree on $\cc$, and $\psi_r(i) < i$ for $i \in \cc$, 
\item $\psi_r(i) = i$ for all $i$ and $r$ with $i \in T_r$ and $i \nin \cc$,
\item the cardinality of the index set $R$ is exactly $2^{|T_1 \bs \cc|}$ and 
there is one summand $(S \cup \psi(\cc) \cup (S_1 \bs \psi(T_1)), S \cup \cc, \id_S \cup \psi_1 |_{\cc})$ for each section $S$ of the matching over $M_1(T_1 \bs \cc)$.
\end{itemize}
\end{enumerate}

As in the case of the partial strands algebra-modules in section~\ref{sec:stam}, the triples $(S,T,\psi)$ are represented graphically by upward veering strand diagrams---the strands beginning at $S \bs \psi(T)$ go off at the top.  The multiplication $* : \T(\zed_1, l)_m \otimes T(\zed_1, l-m)_n \to \T(\zed_1, l)_{m+n}$ is given by horizontal concatenation of partial strands, with double crossings set to zero.  This multiplication induces a multiplication $* : \T_m (\zed_1) \otimes \T_n (\zed_1) \to \T_{m+n} (\zed_1)$ by setting the product to zero whenever the number of strands at the interface does not match.  The differential on $\T(\zed_1)$ is the sum over all smoothings of strand crossings. 

In fact, $\T(\zed_1, l)_m$ can be viewed as a subset of the piece $\T(N_1, l)_m$ of the top algebra-module $\T(N_1)$ defined in Section~\ref{sec:stam}. We can call $\T(\zed_1)$ a sub-algebra-module of $\T(N_1)$, since the algebra and the module structures are the restrictions of those on $\T(N_1)$. Furthermore, note that the algebra-module $\T(N_1)$ has idempotents $I_S$ just like in the strands algebra $\alg(N_1)$. Using the matching $M_1$, we can define  idempotents $I_s$ and  $I$ in $\overline \T(N_1) = \oplus_{m=0}^{2k_1} \T(N_1)_m$ by the same formulas as \eqref{eq:is} and \eqref{eq:is2}. With this in mind, the discussion of group gradings on $\alg(\zed)$ from Section~\ref{sec:matchalg} carries over to the top algebra-module $\T(\zed_1).$ We define the group $G(\zed_1)$ as the subgroup of $G''(N_1)$ consisting of pairs $(j, \alpha)$ with $j \in \frac{1}{2}\zz$ and $\alpha$ in the image of $H_1(F_1)$. (Alternately, we could use half-Borel-Moore homology for $Z' = (1/2, N_1 + 1/2)$ and define $G(\zed_1)$ as a subgroup of $G''_{\T}(N_1)$. However, the image of $H_1(F_1)$ would still lie in $H_1(Z'_1, \a_1) \subset H_1^\T(Z'_1, \a_1)$, so we would obtain the same $G(\zed_1) \subset G''(N_1) \subset G''_{\T}(N_1)$.) 

We then define grading refinement data $\psi_1 :  \{s \subset [2k_1]\}  \to G''(N_1)$ 
and a $(G(\zed_1), \lambda)$-grading on $\T(\zed_1)$ by complete analogy with Section~\ref{sec:matchalg}. Note that here we grade our top algebra-module by a group pair, as in Definition~\ref{def:pairgradedT}, rather than by a group triple.

To the matched interval $\zed_2$ (with distinguished basepoint $z$) we associate a bottom algebra-module $ \B(\zed_2)$, whose elements are conceived of as sums of partial strand diagrams with strands entering at the bottom.  Specifically, the algebra-module is $$\B(\zed_2)_m = \bigoplus_{l=0}^{2k_2 - m} \B(\zed_2, l)_m,$$ where each $\B(\zed_2, l)_m$ is free as a left $\nil_m$-module, with a basis given by formal sums $\sum_{r \in R} (S_r, T_r, \phi_r)$, where $S_r$ and $T_r$ are subsets of $\a_2$ of cardinalities $l$ and $l+m$, and $\phi_r : S_r \ra T_r$ is an injective map with $i \leq \phi(i)$ for all $i \in S_r$, satisfying the conditions: 
\begin{enumerate}[\quad (a)]
\item $S_r$ is a section of the matching over $M_2(S_r)$, and $T_r$ is a section of the matching over $M_2(T_r)$; 
\item there is a subset $\cc \subset \a_2$ contained in $S_r$ for every $r$, such that 
\begin{itemize}
\item all the $\phi_r$ agree on $\cc$, and $\phi_r(i) > i$ for $i \in \cc$, 
\item $\phi_r(i) = i$ for all $i$ and $r$ with $i \in S_r$ and $i \nin \cc$, 
\item the cardinality of the index set $R$ is exactly $2^{|S_1 \setminus \cc|}$ and 
there is one summand $(S \cup \cc, S \cup \phi(\cc) \cup (T_1 \setminus \phi(S_1)), \id_S \cup \phi_1 |_{\cc})$ for each section $S$ of the matching over $M_2(S_1 \setminus \cc)$.  
\end{itemize}
\end{enumerate}
Analogously with the top algebra-module, the multiplication is given by horizontal concatenation and the differential is given by smoothing crossings. We can view $\B(\zed_2)$ as a sub-algebra-module of $\B(N_2).$ We then define a group $G(\zed_2)$ as a subgroup of $G''(N_2)$ as in Section~\ref{sec:matchalg}. The bottom algebra-module $\B(\zed_2)$ has a grading by the pair $(G(\zed_2), \lambda)$,  depending on grading refinement data $\psi_2 :  \{s \mid s \subset [2k_2]\}  \to G''(N_2)$. 

Observe that  the inclusions $H_1(F_1) \to H_1(F)$ and $H_1(F_2) \to H_1(F)$ induce an isomorphism
$$ H_1(F_1) \oplus H_1(F_2) \xrightarrow{\cong} H_1(F).$$
From this we get an isomorphism
\begin {equation}
 \label {eq:gzz}
  G(\zed) \cong G(\zed_1) \times_{\lambda} G(\zed_2).
  \end {equation}

We can now state Theorem~\ref{thm:main1} in a more precise form:

\begin {theorem}
Suppose $\zed_1$ and $\zed_2$ are two matched intervals with union the matched circle $\zed = \zed_1 \cup \zed_2$.  The matching algebra of this circle can be recovered as a differential graded algebra by a tensor product over the nilCoxeter sequential 2-algebra:
\begin {equation}
\label {eq:azed}
 \alg(\zed) \cong \T(\zed_1) \odot_{\nil} \B(\zed_2).
 \end {equation}
 
 Moreover, there exist choices of grading
refinement data for $\zed$, $\zed_1$, and $\zed_2$ such that the isomorphism \eqref{eq:azed}
intertwines the $(G(\zed),\lambda)$-grading on $\alg(\zed)$ with the $(G(\zed_1)
\times_{\lambda} G(\zed_2), \lambda) \cong (G(\zed), \lambda)$ grading on $\T(\zed_1) \odot_{\nil} \B(\zed_2)$.
\end {theorem}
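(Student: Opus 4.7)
The plan is to deduce the theorem from Theorem~\ref{thm:bnt}, which establishes $\alg(N_1+N_2) \cong \T(N_1) \odot_{\nil} \B(N_2)$. Since $\alg(\zed) \subset \alg(N_1 + N_2)$ and $\T(\zed_1), \B(\zed_2)$ are sub-algebra-modules of $\T(N_1), \B(N_2)$, it suffices to show that the slicing isomorphism of Theorem~\ref{thm:bnt} identifies $\alg(\zed)$ with the image of $\T(\zed_1) \odot_{\nil} \B(\zed_2)$. The key enabling observation is that every matched pair in $M$ is internal to either $\cI_1$ or $\cI_2$: the requirement that each of $\cI_1, \cI_2$ be a matched interval combined with the cardinality identity $2k_1 + 2k_2 = 2k$ forbids any matching from straddling the splitting points $z, w$, so $M = M_1 \sqcup M_2$.

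With this observation, I would verify that both defining conditions of $\alg(\zed)$ restrict cleanly across the slice. For the section condition, a subset $S \subset \a$ is a section of $M$ precisely when $S \cap \a_1$ is a section of $M_1$ and $S \cap \a_2$ is a section of $M_2$, matching the section requirements built into the definitions of $\T(\zed_1)$ and $\B(\zed_2)$. For the horizontal chord swap invariance, whenever a summand contains a horizontal chord $\phi_r(i) = i$, its matched position $i'$ lies on the same side of the slice, so the swap modifies only that side while the interface permutation and the other half-diagram are unchanged. Summing over horizontal chord swaps thus decouples into independent sums on each side, yielding a bijection of $\k$-bases. Compatibility with the multiplication $*$ and the differential is then inherited from Theorem~\ref{thm:bnt}, since both operations are restrictions of the corresponding operations on $\alg(N_1+N_2)$.

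For the grading statement, I would invoke the isomorphism $G(\zed) \cong G(\zed_1) \times_{\lambda} G(\zed_2)$ of \eqref{eq:gzz} together with Lemma~\ref{lem:fiberedpair}. Concretely, one chooses base subsets $t_j^{(1)} \subset [2k_1]$ and $t_l^{(2)} \subset [2k_2]$ specifying refinement data $\psi_1$ for $\T(\zed_1)$ and $\psi_2$ for $\B(\zed_2)$, and then defines global refinement data $\psi$ for $\alg(\zed)$ using base subsets of the form $t_n = t^{(1)}_{n_1} \sqcup t^{(2)}_{n-n_1}$ for a fixed partition rule $n \mapsto n_1$. A short computation then verifies that $\psi(s)$ agrees with $\psi_1(s \cap [2k_1]) \cdot \psi_2(s \cap [2k_2])$ up to an element of $G(\zed)$ accounting for any discrepancy between $|s \cap [2k_i]|$ and $n_i$, so that under \eqref{eq:gzz} the grading $\gr_\psi$ on $\alg(\zed)$ coincides with the tensor grading induced by Lemma~\ref{lem:fiberedpair}. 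The main obstacle throughout is the bookkeeping around the fact that basis elements of $\alg(\zed)$ are sums rather than individual triples; once the no-crossing-matchings property is established, both the swap invariance and the grading compatibility follow from carefully tracking these sums across the slice.
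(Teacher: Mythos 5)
Your structural argument is correct and follows the same route as the paper: reduce to Theorem~\ref{thm:bnt}, observe that the matching decomposes as $M = M_1 \sqcup M_2$ across the split, and check that the section condition and horizontal-chord-swap condition restrict independently to each half. (One small quibble: the decomposition $M = M_1 \sqcup M_2$ is part of the \emph{hypothesis} that both pieces are matched intervals, not something to be deduced---but that is cosmetic.) The paper states this step more tersely, simply noting that the defining conditions of $\alg(\zed) \subset \alg(N)$ are detected on the two sides; your fleshing-out is a valid unpacking of that.

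The grading argument, however, has a genuine gap. You propose to build global refinement data $\psi$ from base subsets $t_n = t^{(1)}_{n_1} \sqcup t^{(2)}_{n - n_1}$ with a \emph{fixed} partition rule $n \mapsto n_1$, and then assert that $\psi(s)$ differs from $\psi_1(s \cap [2k_1]) \cdot \psi_2(s \cap [2k_2])$ only by an element of $G(\zed)$. That assertion fails when $|s \cap [2k_1]| \neq n_1$: the difference has $M_*\delta$-image $\bigl(t^{(1)}_{n_1} + t^{(2)}_{n - n_1}\bigr) - \bigl(t^{(1)}_{|s_1|} + t^{(2)}_{|s_2|}\bigr)$, which is generically a nonzero element of $H_0([2k])$, so the difference lies \emph{outside} $G(\zed) = \{(j, \alpha) : \alpha \in \ker(M_* \circ \delta)\}$. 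Consequently $\gr_\psi$ does not coincide with the tensor grading $\gr_{\psi_1 \cup \psi_2}$, and no ``short computation'' patches this: the two gradings genuinely differ. The underlying issue is that the correct refinement data $\psi(s_1 \cup (s_2 + 2k_1)) := [(\psi_1(s_1), \psi_2(s_2))]$ satisfies $M_*\delta(\psi(s)) = s - \bigl(t^{(1)}_{|s_1|} + t^{(2)}_{|s_2|}\bigr)$, where the ``base subset'' depends on the split cardinalities $|s_1|, |s_2|$ rather than on $|s|$ alone, so it simply does not arise from base subsets indexed by $n = |s|$. The paper avoids your obstacle by defining $\psi = \psi_1 \cup \psi_2$ directly by that formula (recall the remark in Section~\ref{sec:matchalg} is that refinement data \emph{can} be specified by base subsets, not that it must be); with this direct definition, the intertwining of gradings under the isomorphism is immediate from Lemma~\ref{lem:fiberedpair} and \eqref{eq:gzz}, as you otherwise correctly set up.
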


\begin {proof} Because the conditions specifying the
subalgebra-module $\mathcal{A}(\zed) \subset \mathcal{A}(N)$ refer exclusively to
properties of the matching that are detected either by the conditions specifying
the subalgebra-module $\mathcal{T}(\zed_1) \subset \mathcal{T}(N_1)$ or
$\mathcal{B}(\zed_2) \subset \mathcal{B}(N_2)$, the isomorphism ~\eqref{eq:azed} of
differential graded algebras is a direct consequence of the proof of the
isomorphism of differential graded algebras $\mathcal{A}(N+N') \cong
\mathcal{T}(N) \odot_{\mathfrak{N}} \mathcal{B}(N')$ from Theorem~\ref{thm:bnt}.

For the statement about the gradings, observe that if we have grading refinement data $\psi_1$ for $\T(\zed_1)$ and $\psi_2$ for $\B(\zed_2)$, we can combine them to get grading refinement data $\psi = \psi_1 \cup \psi_2$ for $\alg(\zed)$, by the formula
$$ \psi(s_1 \cup (s_2+2k_1)) =[( \psi_1(s_1), \psi_2(s_2))] \in G''(N_1) \times_{\lambda} G''(N_2),$$
for any $s_1 \subset [2k_1], s_2 \subset [2k_2].$ Let us grade the algebra-modules $\T(\zed_1)$ and $\B(\zed_2)$ by $\gr_{\psi_1}$ and $\gr_{\psi_2}$, respectively, and let us grade the algebra $\alg(\zed)$ by $\gr_{\psi_1 \cup \psi_2}$. The isomorphism \eqref{eq:azed} then preserves the gradings, as desired.
\end {proof}

We briefly reiterate the field-theoretic viewpoint underlying these constructions.  To the standard circle $S^1$ we are associating the nilCoxeter sequential 2-algebra $\nil$.  To a surface $F_1$ with $S^1$ as outgoing boundary (and with a parametrization encoded as an appropriate Morse function or as a presentation of $F_1$ as the surface associated to a matched interval), we associate a top algebra-module over $\nil$.  Similarly to a surface $F_2$ with $S^1$ as incoming boundary, and an appropriate parametrization, we associate a bottom algebra-module over $\nil$.  This is done in such a way that the matching algebra of the union $F := F_1 \cup_{S^1} F_2$, associated to the parametrized surface $F$ by bordered Heegaard Floer homology, is recovered as a tensor product over the 2-algebra $\nil$. 

Of course, instead of cutting a surface into only two pieces, we could have cut it into more pieces and used top-bottom $\nil$--$\nil$ bimodule-algebras for reconstruction.

\section {Vertical slicing of the planar Floer complex} 
\label{sec:vslice}

For the reader's convenience, we include here a summary of the results of \cite{LOTplanar}, taken almost verbatim from the original reference. The content of \cite{LOTplanar} has served as an informative analogue of bordered Floer homology \cite{LOT}, and it will also form a reference point for our constructions in Sections~\ref{sec:cpa}-\ref{sec:cpd}.

\subsection {The planar Floer complex}
We recall some definitions from \cite[Section 2.1]{LOTplanar}, with a few minor notational changes.

A {\em planar grid diagram} $\H$ of size $N$ is an $(N-1)$-by-$(N-1)$ grid in the plane, composed of $N$ horizontal lines $\alpha_1, \dots, \alpha_N$ and $N$ vertical lines $\beta_1, \dots, \beta_N,$ just as in the geometric interpretation of the nilCoxeter algebra $\nil_N$ in Section~\ref{sec:nilcoxgrid}. However, in order to be consistent with the conventions from Section~\ref{sec:strands}, the alpha curves are ordered from bottom to top, starting with $\alpha_1$ and ending with $\alpha_N.$ In addition, as part of the data for $\H$ we include two sets of markings, $\X = \{X_i\}_{i=1}^{N-1}$ and $\O = \{O_i\}_{i=1}^{N-1},$ with each row and each column containing exactly one $X$ marking and one $O$ marking. 

We let $\S(\H)=\S(\alpha, \beta)$ be the set of $n$-tuples of intersection points $\x = \x_w$ as in Section~\ref{sec:nilcoxgrid}, in one-to-one correspondence with permutations $w \in S_N.$ We call such an $\x$ a {\em planar generator}. For $\x, \y \in \S(\H),$ we let $\Rect(\x, \y)$ be the set of empty rectangles connecting $\x$ to $\y,$ also as in Section~\ref{sec:nilcoxgrid}. (Note that ``empty'' rectangles are allowed to
contain $X$ markings and $O$ markings.)  For $R \in \Rect(\x, \y),$ we set $\X_i(R) = \# (R \cap \{X_i\}) \in \{0,1\}, \ \O_i(R) = \# (R \cap \{O_i\}) \in \{0,1\}, \X(R) = \sum_{i=1}^{N-1} \X_i(R), \O(R) = \sum_{i=1}^{N-1} \O_i(R),$ and
\begin {equation}
\label {eq:ur}
 U(R) = \prod_{i=1}^{N-1} U_i^{\O_i(R)} ,
 \end {equation}
where $U_1, \dots, U_{N-1}$ are formal variables. Further, we consider the set of empty rectangles with no $X$ markings inside, $$\Rectx(\x, \y) = \{R \in \Rect(\x, \y) | \X(R) = 0 \}.$$ 

For $\x \in \S(\H),$ we define the Alexander and Maslov gradings by
\begin {eqnarray*}
 A(\x) &=& \I(\X, \x) - \I(\O, \x) \\
  \mu(\x) &=& \I(\x, \x) - 2\I(\O, \x),
  \end {eqnarray*}
where $\I$ is defined in Equation~\eqref{eq:I}.

Consider the ring
$$\kk = \k[U_1,\dots, U_{N-1}].$$
The planar Floer complex $CP^-(\H)$ associated to $\H$ is the free chain complex over $\kk $ generated by $\S(\H),$ with the differential
$$ \del \x =  \sum_{\y \in \S(\H)} \sum_{R \in \Rectx(\x, \y)} U(R)  \y.$$

\nid We extend the gradings $A$ and $\mu$ to $CP^-(\H)$ by requiring multiplication by a variable $U_i$ to decrease $A$ by one, and $\mu$ by two. The differential $\del$ then decreases $\mu$ by one and keeps $A$ constant.

\subsection {Planar bordered modules and the pairing theorem}
\label {sec:cpad}

Next, we review the main result of \cite{LOTplanar}, which is a reconstruction theorem for the planar Floer complex, with respect to vertical slicing.

Let $\H$ be a planar grid diagram, with horizontal lines $\alpha_i = \{y =i\}$ and vertical lines $\beta_i = \{x=i\}$, for $1 \leq i \leq N$. Let $\ell$ be the vertical line $\{x = k+3/4\}, $ such that the two markings in the $k\th$ column are kept to the left of $\ell$. We denote by $\H^A$ the {\em partial planar grid diagram} consisting of the part of $\H$ to the left of $\ell,$ including the data given by the $X$ and $O$ markings in that half-plane. Similarly, we let $\H^D$ be the partial planar grid diagram consisting of the part of $\H$ to the right of $\ell.$ We refer to $k$ and $N-k$ as the {\em widths} of $\H^A$ and $\H^D$, respectively, and to $N$ as their {\em height}. See Figure~\ref{fig:hahd}.

\begin{figure}[ht]
\begin{center}
\input{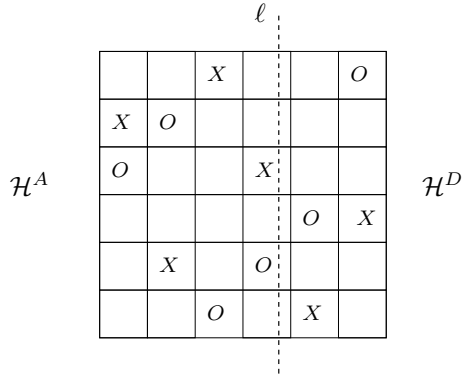}
\end{center}
\caption {{\bf Vertical slicing of a planar grid diagram.} Here $N=7$ and $k=4.$}
\label{fig:hahd}
\end{figure}

Define a planar generator of $\H^A$ to be a $k$-tuple $\x^A = \{p_j\in \alpha_{\iota^{\! A}(j)} \cap \beta_j\}_{j=1}^k,$ where $\iota^{\! A}: \{1, \dots, k\} \to \{1, \dots, N\}$ could be any injective function. A planar generator of $\H^D$ is an $(N-k)$-tuple $\x^D = \{p_j \in \alpha_{\iota^{\! D}(j)} \cap \beta_j\}_{j=k+1}^N,$ where $\iota^D: \{k+1, \dots, N\} \to \{1, \dots, N\}$ is again any injection. Observe that if the images of $\iota^{\! A}$ and $\iota^{\! D}$ are disjoint, then $\x^A \cup \x^D$ is a planar generator for the whole grid $\H.$

Let $\S(\H^A)$ be the set of planar generators of $\H^A.$ Lipshitz, Ozsv\'ath, and Thurston define a differential graded right module $CPA^-(\H^A)$ over (the extension to $\kk$ of) the strands algebra $\alg(N, k)$ from Section~\ref{sec:strands}.  The grading on $CPA^-(\H^A)$  will be commutative, in fact by the group $\zz \times \zz$. (By abuse of notation, we will still use $\alg(N,k)$ to denote the extension of the strands algebra from the field $\k$ to the ring $\kk = \k[U_1, \dots, U_{N-1}]$; each variable $U_i$ will be given the grading $(-1,-2)$.) As a chain complex, $CPA^-(\H^A)$ is freely generated over $\kk$ by $\S(\H^A),$ and has the differential
$$ \del \x^A = \sum_{\y^A \in \S(\H^A)} \sum_{R \in \Rectx(\x^A, \y^A)} U(R) \ \y^A,$$
where $\Rectx(\x^A, \y^A)$ is the set of empty rectangles from $\x^A$ to $\y^A$ with no $X$ markings,  just as for the planar generators of the whole grid diagram.

The action of the strands algebra on $CPA^-(\H^A)$ is as follows. Let $\x^A$ be a planar generator of $\H^A, $ with an associated injective map $\iota_{\x^A}: [k] \to [N].$ Recall from  Section~\ref{sec:strands} that $\alg(N)$ is multiplicatively generated by the elements $I_S$ and $\rho_{i, j}$. Similarly, the subalgebra $\alg(N,k)$ is multiplicatively generated by the elements $I_S$, for $|S| = k$, and the elements $\sum_{\{S \, \mid \, |S| = k\}} I_S \rho_{i,j}$.  By abuse of notation, we let $\rho_{i,j}$ refer to both the element $\rho_{i,j} \in \alg(N)$ and to its projection $\sum_{\{S \, \mid \, |S| = k\}} I_S \rho_{i,j} \in \alg(N,k)$.  For $|S| = k$, the action of the idempotent $I_S$ is given by
$$ \x^A*  I_S = \begin{cases}
\x^A & \text{if } S = \im(\iota_{\x^A})\\
0 & \text{otherwise.}
\end {cases} $$ 

Suppose $\x^A = \{p_l \}$ and $ \y^A = \{q_l \}$ are in $\S(\H^A)$ and are such that $p_l = q_l$ for all $l \in \{1, \dots, k\} \backslash l_0$, and $\iota_{\x^A}(l_0) = i < j =\iota_{\y^A}(l_0).$ Let $H$ be the rectangle (called a ``half-strip'') whose left edge is the segment from $p_l$ to $q_l,$ and whose right edge is on the vertical line $\ell.$  If $H$ contains no points $p_l = q_l$ or $X$ markings, we set $\Halfx(\x^A, \y^A; \rho_{i,j}) = \{H\}.$ If $\x^A, \y^A$ do not satisfy the properties above, we set  $\Halfx(\x^A, \y^A; \rho_{i, j}) = \emptyset.$ We define 
$U(H)$ to be the product of the $U_i$ variables corresponding to the $O$ markings inside the half-strip $H.$ We then define the action of a generator $\rho_{i, j} \in \alg(N,k)$ on $CPA^-(\H^A)$ by the formula
$$ \x^A * \rho_{i, j} = \sum_{\y^A \in \S(\H^A)} \sum_{H \in \Halfx(\x^A, \y^A; \rho_{i,j})} U(H) \ \y^A.$$

\noindent See Figure~\ref{fig:cpa}. It is proved in \cite[Proposition 7.1]{LOTplanar} that the differential and the multiplication defined above do indeed turn $CPA^-(\H^A)$ into a differential module over $\alg(N, k).$

\begin{figure}[ht]
\begin{center}
\input{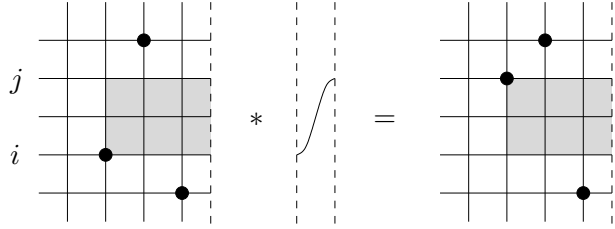}
\end{center}
\caption {{\bf The basic relation $\x^A * \rho_{i,j} = \y^A $ in the module $CPA^-.$} The black dots represent the components of $\x^A$ and $\y^A,$ while the shaded area is a half-strip $H$ producing the respective term.}
\label{fig:cpa}
\end{figure} 

To make $CPA^-(\H^A)$ into a differential graded module, it remains to equip it with a grading. In fact, we can give it both an Alexander and a Maslov grading (denoted $A$ and $\mu$, respectively, just as for $CP^-(\H)$), such that $\del$ keeps $A$ constant and decreases $\mu$ by one. This is done by setting
\begin {eqnarray*}
 A(\x^A) &=& \I(\X^A, \x^A) - \I(\O^A, \x^A) \\
  \mu(\x^A) &=& \I(\x^A, \x^A) - 2\I(\O^A, \x^A),
  \end {eqnarray*}
where $\O^A$ (resp. $\X^A$) is the sets of $O$ (resp. $X$) markings in the partial diagram $\H^A$.

The differential algebra $\alg(N,k)$ can also be equipped with Alexander and Maslov gradings, as follows. Recall that $\alg(N,k)$ already has a grading $\cro$, which counts the crossings in a strands diagram. Let $L_X, L_O \subset \{3/2, \dots, N-1/2\}$ be the sets of $y$-coordinates of the $X$'s and $O$'s in $\H^A$. Given an algebra element $a \in \alg(N,k)$, viewed as a strand diagram, we let $L_X(a)$ be the total number of intersections between $a$ and the lines $y=l$ for $l \in L_X$. We define $L_O(a)$ similarly, and set
\begin {eqnarray}
\label {eq:alex}
 A(a) &=& L_X(a) - L_O(a) \\
\label{eq:maslov}  \mu(a) &=& \cro(a) - 2L_O(a).
  \end {eqnarray}

\nid The multiplication on $\alg(N,k)$ preserves both $A$ and $\mu$, while the differential preserves $A$ and drops $\mu$ by one. Furthermore, with these choices of gradings, it is proved in \cite[Proposition 7.2]{LOTplanar} that the algebra action of $\alg(N,k)$ on $CPA^-(\H^A)$ preserves both $A$ and $\mu$. 

Let us now turn attention to the diagram $\H^D,$ on the right-hand side of the line $\ell.$ We let $\S(\H^D)$ be the corresponding set of planar generators. Lipshitz, Ozsv\'ath and Thurston define a differential graded left module $CPD^-(\H^D)$ over $\alg(N, k)$ as follows. Let $\kk\langle \S(\H^D) \rangle$ be the free $\kk$-module generated by the elements of $\S(\H^D)$; recall that $\kk=\k[U_1, \dots, U_{N_1}]$. Define an action of the algebra of idempotents $\I(N, k) \subset \alg(N, k)$ on $\kk \langle \S(\H^D) \rangle$ by
$$ I_S * \x^D =  \begin{cases}
\x^D & \text{if } S \cap \im(\iota_{\x^D}) = \emptyset\\
0 & \text{otherwise,}
\end {cases} $$ 
where $\iota_{\x^D} : \{k+1, \dots, N\} \to \{1, \dots, N\}$ is the injection associated to the planar generator $\x^D.$ Then let
$$ CPD^-(\H^D) = \alg(N, k) \otimes_{\I(N, k)} \kk \langle \S(\H^D) \rangle,$$
as a left $\alg(N, k)$-module. Thus, a $\kk$-basis of $CPD^-(\H^D)$ is given by elements of the form $(S, T, \phi) * \x^D$, with $(S, T, \phi) \in \alg(N, k), \x^D \in \S(\H^D),$ and $T \cap  \im(\iota_{\x^D}) = \emptyset.$

To define the differential on $ CPD^-(\H^D),$ for $\x^D, \y^D \in \S(\H^D)$, let us denote by $\Rectx(\x^D, \y^D)$ and $\Halfx(\rho_{i, j}; \x^D, \y^D)$ the sets of empty rectangles, resp. half-strips connecting $\x^D$ to $\y^D$ and containing no $X$ markings. The definitions are analogous to the ones for $CPA^-,$ except we now consider half-strips with the chord $\rho_{i,j}$ on the left.  Then set
$$ \del \x^D = \sum_{\y^D \in \S(\H^D)} \sum_{R \in \Rectx(\x^D, \y^D)} U(R)\ \y^D +  \sum_{\y^D \in \S(\H^D)} \sum_{i, j}  \sum_{H \in \Halfx(\rho_{i, j}; \x^D, \y^D)} U(H) \ \rho_{i,j} *  \y^D.$$

Thus, unlike in $CPA^-,$ where half-strips appear in the algebra multiplication, in $CPD^-$ they play a role in the differential. See Figure~\ref{fig:cpd}. Proposition 6.1 in \cite{LOTplanar} shows that $CPD^-(\H^D)$ is indeed a differential module over $\alg(N, k).$

\begin{figure}[ht]
\begin{center}
\input{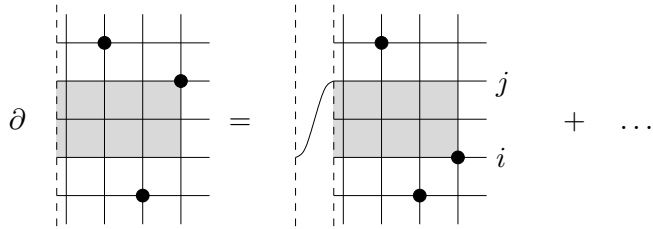}
\end{center}
\caption {{\bf The differential $\del \x^D = \rho_{i,j} * \y^D + \dots$ in the module $CPD^-.$} The black dots represent the components of $\x^D$ and $\y^D,$ while the shaded area is a half-strip $H$ producing the respective term in the differential.
}
\label{fig:cpd}
\end{figure}

Define Alexander and Maslov gradings on $CPD^-$ by first setting
\begin {eqnarray*}
 A(\x^D) &=& \I(\X, \x^D) - \I(\O, \x^D) \\
  \mu(\x^D) &=& \I(\x, \x^D) - 2\I(\O, \x^D),
  \end {eqnarray*}
  where $\x \in \S(\H)$ is any generator extending $\x^D \in \S(\H^D)$. Note that $A(\x^D)$ and $\mu(\x^D)$ do not depend
on the extension $\x$ of $\x^D$, nor on the arrangements $\Xs$ and $\Os$ of $X$ and $O$  markings outside of $\H^D$.We then let $A(a \x^D) = A(a) + A(\x^D)$ and $\mu (a \x^D) = \mu(a) + \mu(\x^D)$, for any $a \in \alg(N,k)$ and $\x^D \in \S(\H^D)$. It is proved in \cite[Proposition 6.3]{LOTplanar} that these gradings turn $CPD^-$ into a differential graded module over $\alg(N,k)$.

The following is Theorem 2 in \cite{LOTplanar}:

\begin {theorem}[Lipshitz-Ozsv\'ath-Thurston]
\label {thm:lot2} Let $\H = \H^A \cup_{\ell} \H^D$ be a planar grid diagram of size $N$, vertically sliced into $\H^A$ and $\H^D,$ with the width of $\H^A$ being $k.$ Then, there is an isomorphism of bigraded chain complexes over $\kk$:
$$CP^-(\H) \cong CPA^-(\H^A) \oast_{\alg(N, k)} CPD^-(\H^D). $$
\end {theorem}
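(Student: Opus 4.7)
The plan is to establish the isomorphism on the level of underlying $\kk$-modules via a natural bijection of generators, then verify that the differentials match term by term. The starting observation is that, because $CPD^-(\H^D) = \alg(N, k) \otimes_{\I(N, k)} \kk\langle \S(\H^D) \rangle$ by construction, the tensor product collapses to
$$ CPA^-(\H^A) \otimes_{\alg(N, k)} CPD^-(\H^D) \cong CPA^-(\H^A) \otimes_{\I(N, k)} \kk\langle \S(\H^D) \rangle. $$
A $\kk$-basis of the right-hand side is given by pairs $(\x^A, \x^D)$ whose idempotents agree, meaning $\im(\iota_{\x^A}) = [N] \setminus \im(\iota_{\x^D})$; this is precisely the condition that $\x := \x^A \cup \x^D$ is a planar generator of $\H$. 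Thus we get a $\kk$-linear bijection $\S(\H) \leftrightarrow$ basis of the tensor product, and a direct computation comparing the local contributions to $\I(\X, \cdot)$, $\I(\O, \cdot)$, and $\I(\cdot, \cdot)$ shows that this identification preserves the Alexander and Maslov gradings.

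Next, I would compute the differential on $\x^A \otimes \x^D$. We have
$$ \del(\x^A \otimes \x^D) = \del_{CPA}(\x^A) \otimes \x^D + \x^A \otimes \del_{CPD}(\x^D). $$
The first summand enumerates empty rectangles with no $X$ markings inside $\H^A$; the empty-rectangle part of $\del_{CPD}(\x^D)$ enumerates empty rectangles inside $\H^D$. The remaining contribution to $\del_{CPD}(\x^D)$ consists of terms of the form $U(H') \rho_{i,j} * \y^D$, one for each half-strip $H' \in \Halfx(\rho_{i, j}; \x^D, \y^D)$ on the right side. Moving $\rho_{i,j}$ across the tensor product via the algebra action yields
$$ \x^A \otimes \rho_{i,j} * \y^D = (\x^A * \rho_{i,j}) \otimes \y^D = \sum_{\z^A} \sum_{H \in \Halfx(\x^A, \z^A; \rho_{i, j})} U(H)\, \z^A \otimes \y^D, $$
so each such cross-term corresponds to a pair of half-strips $(H, H')$ meeting at the interface $\ell$ along the interval $[i, j]$.

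The heart of the argument is to check that these pairs $(H, H')$ are in bijection with empty rectangles $R$ in $\H$ that cross $\ell$. Given such a pair, glue $H$ and $H'$ along the segment $\ell \cap [i,j]$ to obtain a rectangle $R$; the emptiness of $H$ and $H'$ individually, combined with the condition that the idempotents of $\x^A$ and $\x^D$ glue to a planar generator, translates exactly into emptiness of $R$ as a rectangle from $\x$ to $\y := \z^A \cup \y^D$. Conversely, any empty rectangle $R \in \Rectx(\x, \y)$ that crosses $\ell$ does so as a single vertical chord (its column boundaries lying on opposite sides of $\ell$), splitting canonically into $H = R \cap \H^A$ and $H' = R \cap \H^D$, with $U(R) = U(H)\, U(H')$. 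Combined with the already-matched contributions from rectangles lying entirely on one side, this reproduces the differential of $CP^-(\H)$ exactly.

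The main obstacle, or at least the most delicate point, is the bookkeeping on this cross-term: one must ensure that the double-crossing vanishing in the product $\x^A * \rho_{i,j}$, the emptiness constraints on the two half-strips, and the idempotent-compatibility across the tensor product all interact so that no empty interface-crossing rectangle is double-counted and none is missed. Once this combinatorial verification is in place---carried out case by case according to whether the two defining columns of $R$ lie on the same or different sides of $\ell$---the theorem follows.
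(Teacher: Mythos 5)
Your proposal is correct and follows essentially the same strategy as the paper's sketch (and the original argument in \cite{LOTplanar}): exploit the presentation $CPD^- = \alg(N,k) \otimes_{\I(N,k)} \kk\langle\S(\H^D)\rangle$ to collapse the tensor product to a free $\kk$-module on pairs of generators with complementary occupied rows, and then match the differential by pairing the interface-crossing half-strip contributions from the $A$- and $D$-sides to reconstruct the rectangles of $\H$ that cross $\ell$.

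One small inaccuracy in your closing paragraph: you frame ``double-crossing vanishing in the product $\x^A * \rho_{i,j}$'' as a concern, but the action of $\rho_{i,j}$ on $CPA^-$ is defined purely by the emptiness of the half-strip; the double-crossing-equals-zero rule is a feature of multiplication \emph{inside} $\alg(N,k)$, not of the module action. This does not affect the validity of the argument you gave, which relies only on the half-strip emptiness conditions and the decomposition $U(R) = U(H)\,U(H')$, both of which are correctly handled.
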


The idea behind Theorem~\ref{thm:lot2} is that every planar generator $\x \in \S(\H)$ is of the form $\x^A \cup \x^D,$ for some $\x^A \in \S(\H^A)$ and $\x^D \in \S(\H^D)$ such that the images of their corresponding injections into $\{1, \dots, N \}$ are disjoint; this generator can then be represented in $CPA^-(\H^A) \oast_{\alg(N, k)} CPD^-(\H^D)$ by the tensor $\x^A \oast \x^D$. The differential on $CP^-$ is given by counting empty rectangles (with no $X$ markings). If such a rectangle is supported in $\H^A,$ it appears in the differential for $CPA^-$, whereas if it is supported in $\H^D,$ it appears in the differential for $CPD^-.$ The interaction with the algebra $\alg(N, k)$ comes into play for rectangles that cross the interface $\ell.$ Indeed, if we have such a rectangle from $\x = \x^A \oast \x^D$ to $\y = \y^A \oast \y^D,$ the relations from Figures~\ref{fig:cpa} and \ref{fig:cpd} imply that
$$ \del(\x^A \oast \x^D) = \x^A \oast \bigl(\del(\x^D)\bigr) + \dots = \x^A \oast (\rho_{i,j} * \y^D) + \dots = (\x^A* \rho_{i,j}) \oast \y^D + \dots = \y^A \oast \y^D + \dots, $$
so the relation in $CP^-$ is recovered.

Recall that we defined $\alg(N) = \oplus_k \alg(N, k).$ We can make $CPA^-(\H^A)$ and $CPD^-(\H^D)$ into $\alg(N)$-modules, by letting the elements of $\alg(N, k')$ act by zero whenever $k'\neq k.$ The result of Theorem~\ref{thm:lot2} can be rephrased as:
\begin {equation}
CP^-(\H) \cong CPA^-(\H^A) \oast_{\alg(N)} CPD^-(\H^D).  
\end {equation}

\nid This rephrasing will be convenient for giving a uniform presentation of the cornered decompositions in the next two sections.

\section{Reconstructing $CPA^-$ from cornered 2-modules} \label{sec:cpa}

Recall that in Sections~\ref{sec:sam} and \ref{sec:s2m} we defined the notions of sequential algebra-modules and 2-modules over a sequential 2-algebra. In particular, given a sequential 2-algebra $\A = \{A_m \},$ top, right and bottom algebra-modules $\T = \{\T_m \}, \R = \{R_{m, p}\}, \B = \{B_p \}$ over $\A,$ and top-right and bottom-right 2-modules $TR=\{TR_m\}, BR = \{BR_p\}$ over these, we can form the vertical tensor product 
\begin {equation}
\label {eq:taraboi}
TR \odot_{\R} BR,
\end {equation}
which is a right module over the algebra $\T \odot_{\A} \B.$ 

Let $\H^A$ be a partial grid diagram of height $N$ and width $k$, bounded on the right by a vertical line $\ell,$ as in Section~\ref{sec:cpad}. Our aim is to present a tensor product decomposition of the form \eqref{eq:taraboi} for the differential graded module $CPA^- = CPA^-(\H^A)$ over $\alg(N).$ 

For this purpose, let us consider a horizontal slicing of $\H^A$ by a horizontal line $\ell'$ at height $k'+3/4$, such that all the markings in the $(k')\th$ row of $\H^A$ are below $\ell'.$ We denote the quadrant below $\ell'$ (and left of $\ell$) by $\haa,$ and the quadrant above $\ell'$ by $\had.$ This is illustrated on the left-hand side of Figure~\ref{fig:dada}.

We already know, by Theorem~\ref{thm:bnt}, that the strands algebra $\alg(N)$ can be decomposed as a vertical tensor product over the nilCoxeter sequential 2-algebra:
$$ \alg(N) \cong \T(k') \odot_{\nil} \B(N-k').$$
(We implicitly use here the extensions of the respective objects to the coefficient ring $\kk.$) We will define a right algebra-module $\R(k)$ over $\nil$, a top-right 2-module $\cpaa = \cpaah$ over $\R(k)$ and $\T(k')$, and a bottom-right 2-module $\cpad = \cpadh$ over $\B(N-k')$ and $\R(k)$, such that the vertical tensor product of $\cpaa$ and $\cpad$ produces $\cpa.$ We will refer to $\cpaa$ and $\cpad$ as {\em planar cornered 2-modules}.  (We place braces around $AA$ in order to distinguish our notation from $CPAA^-.$ The latter is already in use in the literature as notation for a different object, a bimodule appearing in \cite[Appendix A]{LOT}.)

\subsection {The right algebra-module}
\label {sec:ram}
We start by defining the algebra-module 
$$\R(k) = \{\R(k)_{m,p} | m, p \geq 0\}$$ 
over the nilCoxeter 2-algebra $\nil.$ The definition is reminiscent of, but slightly more complicated than, those of $\T(N)$ and $\B(N)$ from Section~\ref{sec:stam}. Roughly, a generator of $\R(k)$ will be a picture containing the following: rightward veering strands on up to $k$ spots, accounting for differential rectangles in $\haa$ across the horizontal split; a nilCoxeter element to account for horizontally splitting the strands algebra;  and an interaction between the strands and the nilCoxeter element (in the form of caps) to account for differential rectangles that cover the double  splitting  point, i.e. the intersection of the vertical and horizontal cuts.

Precisely, each chain complex $\R(k)_{m,p}$ is nonzero only for $0 \leq m-p \leq k$, in which case it has an additional decomposition
$$ \R(k)_{m,p} = \bigoplus_{s=m-p}^k \R(k, s)_{m,p}.$$

\nid As a $\kk$-module, the summand $\R(k,s)_{m,p}$ is freely generated by data of the form $(S, T, S', P, \phi, \psi, \psi'),$ where:
\begin {itemize}
\item $S, T \subseteq [k] = \{1, \dots, k\}$ are subsets with $s-(m-p)$ elements each;
\item $S'$ is a subset of $[k]$ with $m-p$ elements such that $S \cap S' = \emptyset$; 
\item $\phi: S \to T$ is a bijection satisfying $\phi(i) \geq i$ for all $i\in S;$ 
\item $P$ is a subset of $[m]$ of cardinality $p;$ 
\item $\psi: P \to [p]$ and $\psi' : ([m] \setminus P) \to S'$ are bijections.
\end {itemize}

Graphically, we represent such a generator by a rectangular diagram split into two halves by a vertical dotted line. To the left of this line we have $k$ bullets at the bottom and $k$ at the top, and $s-(m-p)$ rightward-veering strands joining some of the bottom bullets (corresponding to the subset $S$) to some of the top ones (corresponding to the subset $T$); the strands link bullets according to the bijection $\phi$. To the right of the dotted line we have $m$ bullets at the bottom and $p$ at the top, with $p$ of the bottom ones (corresponding to $P$) joined with the top ones according to $\psi.$ Finally, $m-p$ of the bottom bullets to the left (corresponding to $S'$) are joined with the remaining bottom bullets to the right according to $\psi'$. For example,
$$\includegraphics[scale=0.8]{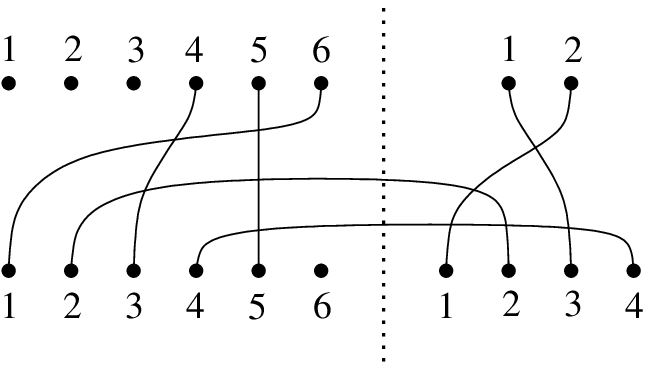}$$
represents a generator $(S, T, S', P, \phi, \psi, \psi')$, with $k=6, s=5, m=4, p=2, S=\{1,3,5\}, T=\{4,5,6\}, S' =\{2,4\},$ and $P = \{1,3\}.$

We define a differential on $\R(k,s)_{m,p}$ as the sum over all ways of smoothing a crossing, analogously to the differentials on $\nil_m, \alg(N, k),$ etc.; note that for each crossing there is only one smoothing resulting in another element of $\R(k,s)_{m,p}$. In the same vein, we define a vertical multiplication:
$$\cdot: \R(k,s)_{m,m'} \otimes \R(k, s-m+m')_{m',p} \to \R(k,s)_{m,p}$$ 
by vertical concatenation of diagrams, with double crossings being set to zero. This induces a vertical multiplication
$$ \cdot: \R(k)_{m,m'} \otimes \R(k)_{m', p} \to \R(k)_{m,p}, $$
as required in the definition of a sequential right algebra-module, by setting products to zero when not previously defined (that is, when the $s$ values do not correspond appropriately).

Finally, we define a horizontal multiplication
$$  * : \R(k,s)_{m,p} \otimes \nil_n \to \R(k,s)_{m+n, p+n}, $$
by horizontal concatenation. Note that the $n$ strands from the algebra element in $\nil_n$ do not directly interact with the ones in $\R(k,s)_{m,p}$; however, they will interact later when we consider cornered 2-modules. Altogether, these operations turn $\R(k)$ into a unital sequential right algebra-module over $\nil.$ Note that since $\R(k)_{m,p} = 0$ for $m < p$, the algebra-module $\R$ is multiplicatively finite in the sense of Definition~\ref{def:mf}.

Recall that in Section~\ref{sec:stam} we gave a generator-and-relations presentation for the top algebra-module $\T(N),$ with respect to the  horizontal multiplication. We now give a similar presentation for $\R(k),$ with respect to the vertical multiplication.

Note that, unlike $\ol \T(N) = \oplus_{m=0}^N \oplus_{k=m}^N \T(N,k)_m$, the total algebra $\ol \R(k) = \prod_{m,p,s} \R(k,s)_{m,p}$ is an infinite direct product, so by generators we will mean ``generators in the sense of direct products'', that is, we only ask that an arbitrary element can be expressed as an infinite sum of generators. With that in mind, we consider the following generators: 
\begin {itemize}
\item idempotents $J_{S, m}=(S, S, \emptyset, [m], \id_S,\id_{[m]}, \emptyset),$ for any $S \subseteq [k]$ and $m \geq 0$;
\item chords $\lambda_{i,j}$ going from the $i\th$ bottom to the $j\th$ top bullet on the left-hand side of the dotted line, for $i < j$; these are the analogues of $\rho_{i,j}$ from Section~\ref{sec:strands}, except now each $\lambda_{i,j}$ is an {\em infinite} sum of all septuples differing from a $J_{S, m}$ only in positions $i$ and $j$ on the left;
\item nilCoxeter elements $\sigma_i$ for each $i \geq 1;$ each $\sigma_i$ is an infinite sum of all elements $\sigma_{S, m, i}$ for $m > i$, where $\sigma_{S, m, i}$ is obtained from the idempotent $J_{S, m}$ by introducing a twist between the $i\th$ and the $(i+1)\th$ strands on the right-hand side of the dotted line;  
\item {\em caps} of the form $\xi_{i}$, drawn as chords joining the $i\th$ bottom bullet on the left to the first bottom bullet on the right; that is, $\xi_{i}$ is the infinite sum of all $(S, S,\{i\}, [m] \setminus \{1 \}, \id_S, \psi, \psi')$, where $\psi'$ is the unique bijection and $\psi$ is the unique order-preserving bijection.    
\end {itemize}

These generators are subject to a number of relations as follows. First, we have analogues of \eqref{eq:rel1}-\eqref{eq:rel2}:
\begin {equation}
\prod_m \sum_{S \subseteq [k]}  J_{S,m} = 1,
\end {equation}
and
\begin {equation}
\label {eq:rel1''}
J_{S, m} \cdot J_{T, n} = \begin{cases} J_{S,m} & \text{if } S=T \text{ and } m=n; \\
0 & \text{otherwise.} \end {cases}
\end {equation}

\nid We also have exact analogues of \eqref{eq:rel3}-\eqref{eq:rel7}, with the $\cdot$ multiplication replacing the $*$ multiplication, $J_{S,m}$ replacing $I_S$, and the horizontal chords $\lambda_{i,j}$ replacing the vertical chords $\rho_{i,j}.$ 

Next, the nilCoxeter elements $\sigma_i$ satisfy the usual nilCoxeter relations \eqref{eq:nc1}-\eqref{eq:nc3}, and interact with the chords and the idempotents by:
\begin {alignat}{2}
J_{S,m} \cdot \sigma_i &= \sigma_i \cdot J_{S, m} & \qquad & \text{ for all } S \text{ and } i \label{eq:rel2''}\\
J_{S,m} \cdot \sigma_i &= 0 &  \qquad & \text{ for } m \leq i\label{eq:rel3''}\\
\lambda_{i, j} \cdot \sigma_l &= \sigma_l \cdot \lambda_{i,j} &  \qquad & \text{ for all } i, j, l. \label{eq:rel4''}\end {alignat}

Further, we have new relations involving the caps $\xi_{i}$:
\begin {alignat}{2}
J_{S, m} \cdot \xi_{i} &= 0 & \qquad & \text{ for } i \not\in S \text{ or } m=0 \label{eq:rel5''} \\
\xi_{i} \cdot J_{S, m} &= 0 & \qquad & \text{ for } i \in S   \label{eq:rel6''} \\
\xi_{i} \cdot J_{S, m} &= J_{S\cup \{i\}, m+1} \cdot \xi_{i}  & \qquad & \text{ for }  i \not \in S \label{eq:rel7''} \\
J_{S,m} \cdot \lambda_{i, j} \cdot \xi_{j} &= J_{S, m} \cdot \xi_{i} &  \qquad & \text{ for } i < j \text{ and } j \not \in S \label{eq:rel8''}\\
\lambda_{i,j} \cdot \xi_{l}  &= \xi_{l} \cdot \lambda_{i,j}  & \qquad & \text{ for }  i < j <l \text{ or } l< i < j \label{eq:rel9''}\\
\lambda_{i, j} \cdot \xi_{l} &= 0  & \qquad &  \text{ for } i <  l < j  \label{eq:rel10''} \\
\xi_i \cdot \xi_j &= \sigma_1 \cdot \xi_j \cdot \xi_i & \qquad &  \text{ for } i < j \label{eq:rel11''} \\
\xi_{i} \cdot \sigma_v &= \sigma_{v+1} \cdot \xi_{i} & \qquad &  \text{ for } v \geq 1.  \label{eq:rel12''}  
\end {alignat}

These generators and relations give a presentation for the algebra $\ol \R(k)$ (in the sense of infinite direct products). We should also mention the effect of the differential on generators: 
\begin {alignat}{2}
\del J_{S,m} &=0 & \qquad & \label{eq:rel15''}\\
\del \lambda_{i,j} &= \sum_{\{l | i<l<j\}} \lambda_{l,j} \cdot \lambda_{i,l} & \qquad & \label{eq:rel16''}\\
\del \sigma_i &= \sum_{S \subseteq [k]} \sum_{m > i} J_{S,m} & \qquad & \label{eq:rel17''}\\
\del \xi_{i} &= \sum_{j > i} \xi_{j} \cdot \lambda_{i,j}. & \qquad & \label{eq:rel18''}
\end {alignat}

\subsection {The cornered 2-module $\cpaa$} \label {sec:cpaa}
Next, we define a top-right 2-module $$\cpaa = \{\cpaa_m\}_{m \geq 0}$$ over $\R(k)$ and $\T(k')$.   Consider the quadrant $\haa$ of width $k$ and height $k'$. A partial planar generator $\xaa$ of $\haa$ is defined to be a collection of points $\{p_i \in \alpha_i \cap \beta_{\eps(i)} | i \in S \}$, where $S= S_{\xaa} \subseteq [k']$ is a subset, and $\eps=\eps_{\xaa} : S \to [k]$ is an injection. Note well that S is indeed a subset of
the $\alpha$ lines, not a subset of the $\beta$ lines as had been our convention
until this moment. Whichever choice is made conflicts with our established
notation for either the top algebra-module or the right algebra-module; we pick
notation that aligns well with the top algebra-module, at the expense of the
more recently described right algebra-module. Observe also that there may be both rows and columns in $\haa$ unoccupied by components of $\xaa.$ We will denote the set of partial planar generators in $\haa$ by $\S(\haa)$.

As a $\kk$-module, we let $\cpaa_m$ be freely generated by triples of the form $(\xaa, M, \sigma),$ where:
\begin {itemize}
\item $\xaa$ is a partial planar generator with a corresponding injection $\eps_{\xaa}: S_{\xaa} \to [k],$ \item $M$ is a subset of $[k]$ of cardinality $m$ that is disjoint from the image of $\eps_{\xaa};$
\item $\sigma = \sigma_w \ (w \in S_m)$ is a standard basis element of the nilCoxeter algebra $\nil_m$.   
\end {itemize}

Graphically, we represent a triple $(\xaa, M, \sigma)$ by the collection $\xaa$ of points in the plane, together with $m$ arrows on the top edge of $\haa,$ marking the elements of $M,$ and a small rectangular box at the top right of the diagram $\haa,$ containing strands that encode the element $\sigma$. An example is shown in Figure~\ref{fig:cpaa}. The nilCoxeter element $\sigma$ is associated to a permutation $w$ of $m$ objects. In the graphical representation, each arrow in $M$ is paired with one of the strands in $\sigma$, with the leftmost arrow paired with the strand starting at the leftmost point of the bottom of the dotted square, and so forth. 

\begin{figure}[ht]
\begin{center}
\input{cpaa.pstex_t}
\end{center}
\caption {{\bf A generator of $\cpaa.$} In this example, the quadrant $\haa$ has width $k=6$ and height $k'=5.$ The partial planar generator $\xaa$ is indicated by the two black dots, and has a corresponding injection $\eps: \{2,4\} \to \{1, \dots, 6\}$ with $\eps(2)=6, \eps(4)=3.$ The set $M=\{1,4,5\},$ is indicated by the arrows, while the element $\sigma =\sigma_2\sigma_1 \in \nil_3$ is drawn in the top-right dotted square. We view each arrow as being paired with a corresponding strand in $\sigma.$}
\label{fig:cpaa}
\end{figure}

\begin {remark}
The intuition behind the definition of $\cpaa$ is the following. One should think of the arrows in $M$ as extra black dots (i.e., contributions to $\xaa$) that occupy the respective columns but have been moved ``to infinity'' in the vertical direction. This is justified by the action of half-chords on $\cpaa,$ as in Figure~\ref{fig:cpaa2} below. An arrow in $M$ is therefore the trace of a black dot in that column, after the action of a half-chord which moved it to infinity. The nilCoxeter element $\sigma$ is a permutation of these arrows, encoding the order in which they were moved to infinity;  more specifically, the left to right order of the strands
at the bottom of $\sigma$ is the left to right order of the arrows in $M$, while
the left to right order of the strands at the top of $\sigma$ is the order in
which the dots were moved to infinity.\end {remark}

We define a differential on $\cpaa$ by combining the differential on  the nilCoxeter algebra with the usual rectangle-counting differential from $CPA^-$, and a new term counting certain half-strips, as follows.
Suppose $\yaa \in \S(\haa)$ is obtained from $\xaa$ by shifting one component $p_i$ from the column $\beta_j$ to the column $\beta_{l}$ (in the same row $\alpha_i$), with $j < l.$ We then let $H$ be the half-strip bounded by $\beta_j, \alpha_i, \beta_l$ and $\ell'.$ If $H$ has no components of $\xaa$ or $X$ markings inside, we set $\Halfx(\xaa, \yaa; \lambda_{j,l}) = \{H\},$ and define $U(H)$ as in Section~\ref{sec:cpad}. In all other cases, we set $\Halfx(\xaa, \yaa; \lambda_{j,l}) = \emptyset.$ The differential is defined as the sum
\begin {equation}
\label {eq:delcpaa}
\del = \del_1 + \del_2 + \del_3,
 \end {equation}
where, for a triple $(\xaa, M, \sigma)$, we set
\begin {eqnarray*}
\del_1(\xaa, M, \sigma) &=& (\xaa, M, \del \sigma) \\
\del_2(\xaa, M, \sigma) &=& \sum_{\yaa \in \S(\haa)} \sum_{R \in \Rectx(\xaa, \yaa)} U(R) \ (\yaa, M , \sigma) \\
 \del_3(\xaa, M, \sigma) &= &  \sum_{\yaa \in \S(\haa)} \sum_{j \not \in M} \sum_{ l\in M} \sum_{H \in \Halfx(\xaa, \yaa; \lambda_{j,l})} U(H) \ (\yaa, M \cup \{j\} \setminus \{l\}, \sigma^{M, j, l}). 
\end {eqnarray*}

In the last equation, $\sigma^{M, j, l}$ denotes the element of $\nil_m$ given by
\begin {equation}
\label {eq:smjl}
 \sigma^{M, j, l} =  (\sigma_{j'+1} \sigma_{j'+2} \dots  \sigma_{l'-1}) \cdot \sigma,
 \end {equation}
where $j' = \# \{u \in M | u < j \}$ and $l' =  \# \{u \in M | u \leq l \}.$ In other words, in the graphical representation, turning $\sigma$ into $\sigma^{M, j,l}$ means that the strand of $\sigma$ paired with the arrow at $l \in M$ becomes paired with the one at $j$, so its starting point gets moved a few positions to the left using pre-multiplication with $ \sigma_{j'+1} \sigma_{j'+2}  \dots \sigma_{l'-1}.$ See Figure~\ref{fig:Dcpaa} for an example.

\begin{figure}[ht]
\begin{center}
\input{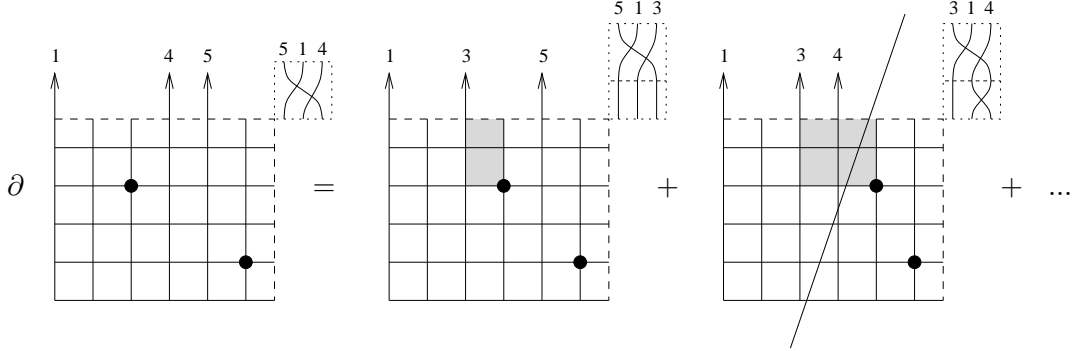}
\end{center}
\caption {{\bf The differential of the generator from Figure~\ref{fig:cpaa}.} We show here two contributions to the differential coming from the third term, $\del_3,$ in Equation~\eqref{eq:delcpaa}. The shaded areas represent the respective half-strips $H$. Note that the second contribution is actually zero, because of the double crossing in the nilCoxeter element. The differential also has two other contributions from $\del_1$, not shown here, which correspond to smoothing crossings in the original nilCoxeter element.}
\label{fig:Dcpaa}
\end{figure}

\begin {remark}
At this point the reader may wonder why we included the term $\del_3$ (counting half-strips) in the differential. Since $\cpaa$ is meant to be a module of type $A$ in both the horizontal and vertical directions, one expects that half-strips would appear in the action, not in the differential. In fact, half-strips without arrows on their top edge are included in the action and not in the differential. (See Equation~\eqref{eq:aar2} below.) The half-strips counted in $\del_3$ have an arrow at their top-right corner; conceiving of the arrow
as a dot at infinity, these half strips may be thought of as lying fully inside
the quadrant $\haa$.  They are more analogous to the rectangles that appear in
$\partial_2$ than ordinary half strips, so it is natural to count them in the differential of $\cpaa$. The necessity of the term $\del_3$ can also be justified by analyzing the Leibniz rule for the action of a half-chord: see Figure~\ref{fig:cpaa_leibniz}, establishing Equation~\eqref{eq:cpaa_leibniz} below.
\end {remark}

\begin{lemma}
\label {lemma:dcpaa}
The operation $\del = \del_1 + \del_2 + \del_3$  defines a differential on $\cpaa,$ that is, $\del^2 = 0.$ 
\end {lemma}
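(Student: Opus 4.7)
The plan is to expand $\partial^2 = (\partial_1 + \partial_2 + \partial_3)^2$ and verify, in characteristic two, that each of the six bihomogeneous terms $\partial_i \partial_j + \partial_j \partial_i$ (with $i \le j$) vanishes separately. Three of these are essentially automatic. The identity $\partial_1^2 = 0$ is just the nilCoxeter differential squaring to zero. The identity $\partial_2^2 = 0$ is the standard domain-composition argument for the planar Floer complex applied inside the quadrant $\haa$: each composition of two empty $\X$-avoiding rectangles from $\xaa$ to $\zaa$ is the union of a unique ``domain'', which decomposes in exactly two ways as an ordered pair of empty rectangles, so the two summands cancel. Since $\partial_2$ affects only $\xaa$ while $\partial_1$ affects only $\sigma$ and neither touches $M$, the mixed term $\partial_1 \partial_2 + \partial_2 \partial_1$ vanishes trivially.

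For $\partial_2\partial_3 + \partial_3\partial_2 = 0$, I would carry out a domain analysis analogous to the rectangle-plus-half-strip cancellation already used for $CPA^-$ in \cite{LOTplanar}. The set $M$ plays no role in selecting which domains appear: $\partial_3$ simply requires the top-right corner of the half-strip to sit at an arrow column. Compositions of a rectangle and a half-strip either (i) have disjoint supports and commute, or (ii) share an edge or corner, in which case the union domain admits exactly two decompositions. Because $\partial_2$ leaves $\sigma$ unchanged, in both orderings of case (ii) the same nilCoxeter modification $\sigma \mapsto \sigma^{M,j,l}$ is produced by the unique half-strip in the decomposition; hence the two terms cancel in $\mathbb{F}_2$.

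The heart of the proof is the identity $\partial_3^2 + \partial_1\partial_3 + \partial_3\partial_1 = 0$. Using \eqref{eq:smjl} and the Leibniz rule,
\[
\partial\bigl(\sigma^{M,j,l}\bigr) = \partial\bigl(\sigma_{j'+1}\cdots\sigma_{l'-1}\bigr)\cdot \sigma + \bigl(\sigma_{j'+1}\cdots\sigma_{l'-1}\bigr)\cdot \partial \sigma.
\]
The second summand is exactly $(\partial\sigma)^{M,j,l}$, and cancels $\partial_3\partial_1$ term-by-term. The residual $\partial\bigl(\sigma_{j'+1}\cdots\sigma_{l'-1}\bigr)\cdot \sigma = \sum_i \sigma_{j'+1}\cdots\widehat{\sigma_i}\cdots\sigma_{l'-1}\cdot\sigma$ must be cancelled by $\partial_3^2$. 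I would pair compositions of half-strips as follows: pairs of disjoint half-strips can be done in either order and yield the same final $\sigma$-factor, cancelling in $\mathbb{F}_2$; a composition in which the first half-strip lands at an arrow $l$ and the second half-strip lands at an arrow $l''$ ``straddling'' the trace of the first then produces a nilCoxeter factor of the shape $\sigma_{j'+1}\cdots\widehat{\sigma_i}\cdots\sigma_{l'-1}\cdot\sigma$, precisely matching a residual term from $\partial_1\partial_3$.

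The main technical obstacle will be the bookkeeping in this last step: the indices $j', l'$ are defined relative to the current set $M$, and after the first half-strip move the set $M$ is replaced by $M \cup \{j\} \setminus \{l\}$, shifting all the relative positions. One must show that the reindexed formula \eqref{eq:smjl} applied to the second half-strip agrees, after multiplication with the first $\sigma^{M,j,l}$, with exactly the missing-generator terms produced by the Leibniz expansion above. I expect this to reduce, case by case on whether $j'', l''$ lie below, between, or above $j', l'$, to a combinatorial identity in $\nil_m$ that follows from the braid relations \eqref{eq:nc2}--\eqref{eq:nc3} together with $\sigma_i^2 = 0$.
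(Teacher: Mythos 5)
Your handling of $\partial_1^2 = 0$, $\partial_2^2 = 0$, and $\partial_1\partial_2 + \partial_2\partial_1 = 0$ matches the paper and is correct. The difficulty is the further splitting of the residue into two independent identities, $\partial_2\partial_3 + \partial_3\partial_2 = 0$ and $\partial_3^2 + \partial_1\partial_3 + \partial_3\partial_1 = 0$: neither holds on its own. The standard ``each union domain has exactly two ordered decompositions'' argument does not pair a rectangle/half-strip composition exclusively with its opposite-order version. A half-strip to an arrow, followed by a rectangle sharing an edge with it, can form a union region whose only other decomposition is as two half-strips; such a term in $\partial_2\partial_3$ then cancels against a term of $\partial_3^2$, not $\partial_3\partial_2$. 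Symmetrically, $\partial_3\partial_2$ contributes terms that must pair off with $\partial_3^2$. And because a $\partial_3$-step replaces $M$ by $M \cup \{j\}\setminus\{l\}$ and premultiplies $\sigma$, which of the three bigradings a given domain composition contributes to is partly determined by the nilCoxeter factor, not just the shaded region, so the ledgers are genuinely entangled.

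Consequently the remaining cancellation must be carried out as a single five-term identity, $\partial_1\partial_3 + \partial_3\partial_1 + \partial_2\partial_3 + \partial_3\partial_2 + \partial_3^2 = 0$, which is exactly how the paper's proof is organized (the case analysis spread over Figures~\ref{fig:deldel1},~\ref{fig:deldel2},~\ref{fig:deldel3}). Your Leibniz expansion $\partial(\sigma^{M,j,l}) = (\partial\sigma)^{M,j,l} + \partial(\sigma_{j'+1}\cdots\sigma_{l'-1})\cdot\sigma$ is correct and is the paper's key algebraic ingredient for matching $\partial_1\partial_3$, $\partial_3\partial_1$, and a portion of $\partial_3^2$; but the rest of $\partial_3^2$ pairs with $\partial_2\partial_3$, with $\partial_3\partial_2$, or with itself (disjoint and L-shaped double half-strip configurations), and those cross-cancellations cannot be confined to the sub-identity you isolate. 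Before the bookkeeping you defer can even begin, the partition of the problem needs to be corrected.
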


\begin {proof}
We already know that $\del_1^2 = 0.$ Further, we have $\del_2^2 = 0$ by the same proof as for closed grids, see \cite[Proposition 2.8]{MOST}: contributions from two noninteracting rectangles in the two possible orders cancel out, as do contributions from the two ways of cutting an L-shaped figure made of two rectangles.  We also have $\del_1 \del_2 + \del_2  \del_1 = 0,$ because contributions from rectangles and nilCoxeter elements do not interact.

The last step is to show that 
\begin {equation}
\label {del13}
\del_1 \del_3 + \del_3 \del_1 + \del_2 \del_3 + \del_3 \del_2 + \del_3^2 = 0.
\end {equation}

\nid We claim that when we evaluate the left-hand side of \eqref{del13} at a generator $(\xaa, M, \sigma),$ all terms cancel in pairs. 

Indeed, any contributions to $\del_3 \del_1 (\xaa, M, \sigma)$ are of the form $U(H) (\yaa, M \cup \{j\} \setminus \{l\}, (\del \sigma)^{M, j, l})$, coming from half-strips $H \in \Halfx(\xaa, \yaa; \lambda_{j,l})$. Since
\begin {equation}
 \label {eq:delsi}
 \del(\sigma^{M, j, l}) = (\del \sigma)^{M, j, l} + \del(\sigma_{j'+1}\sigma_{j'+2} \dots \sigma_{l'-1}) \cdot \sigma,
 \end {equation}
these contributions also appear either in 
$$U(H) (\yaa, M \cup \{j\} \setminus \{l\},\partial(\sigma^{M,j,l}))$$ or in $$U(H) (\yaa, M \cup \{j\} \setminus \{l\}, \del(\sigma_{j'+1}\sigma_{j'+2} \dots \sigma_{l'-1}) \cdot \sigma).$$
In the first case, the contribution appears in $\partial_1 \partial_3 (\xaa, M, \sigma)$ as shown in Figure~\ref{fig:deldel1}(a).  In the second case, the contribution appears in $\partial_3^2(\xaa, M, \sigma)$ as in Figure~\ref{fig:deldel1}(b).  Similarly, any contribution to $\partial_1 \partial_3 (\xaa, M, \sigma)$ is of the form $U(H) (\yaa, M \cup \{j\} \setminus \{l\}, \partial(\sigma^{M,j,l}))$.  By Equation~\eqref{eq:delsi}, this contribution occurs either in $\partial_3 \partial_1 (\xaa, M, \sigma)$ as in Figure~\ref{fig:deldel1}(a) or in $\partial_3^2(\xaa, M, \sigma)$ as in Figure~\ref{fig:deldel1}(c).

\begin{figure}[p]
\begin{center}
\input{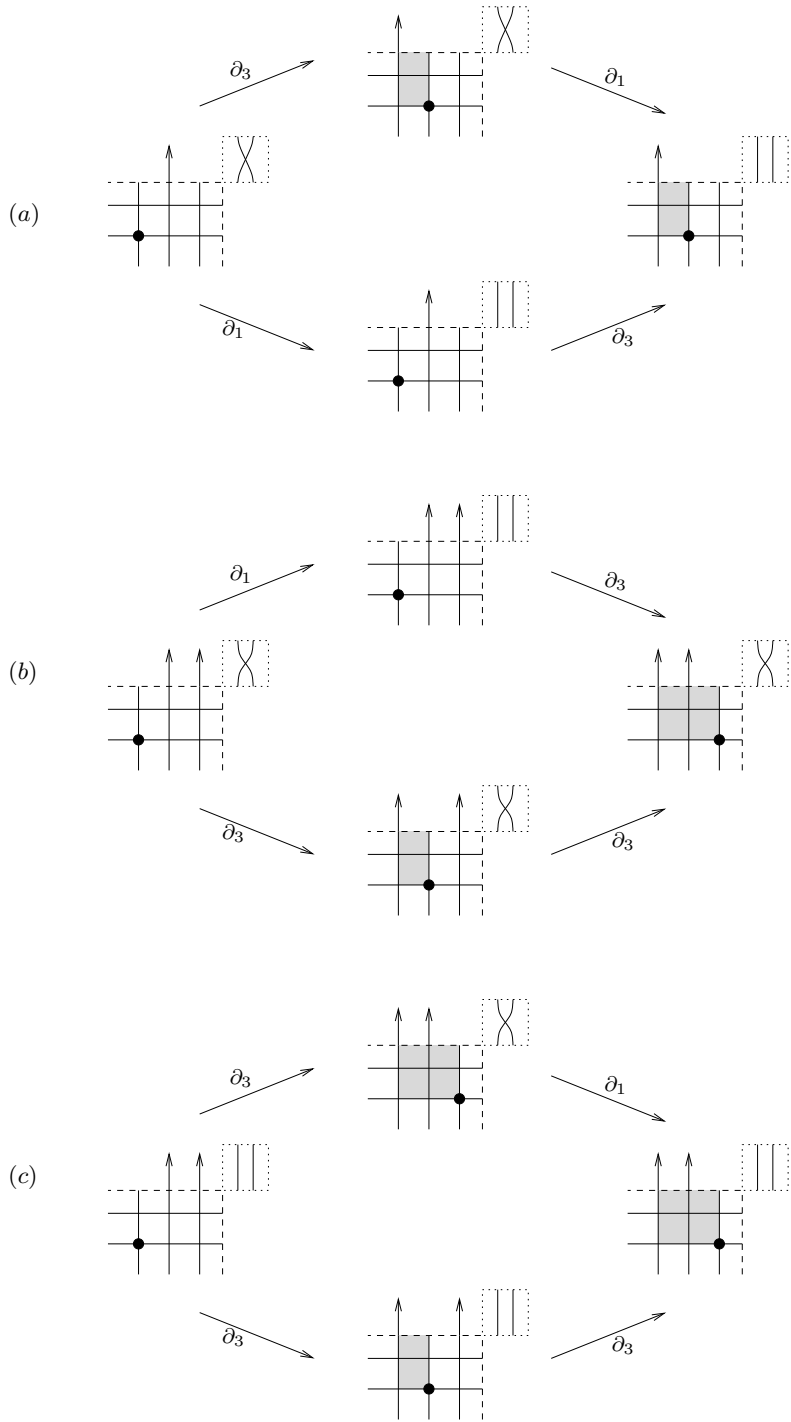}
\end{center}
\caption {{\bf Cancellations between $\del_1 \del_3$, $\del_3 \del_1$, and $\del_3^2$.} The shaded area represent the respective half-strips $H$ (from all previous steps), which produce the factors $U(H)$. Note that in case (b), there is an additional contribution to $\del_3 \del_1$ (not drawn here), which cancels a contribution to $\del_1 \del_3$ just like in case (a).}
\label{fig:deldel1}
\end{figure}

Next, all contributions to $\del_2 \del_3 (\xaa, M, \sigma)$ are of one of the types in Figure~\ref{fig:deldel2} (a)-(c), which also appear in $\del_3 \del_2 (\xaa, M, \sigma)$; or of the type in Figure~\ref{fig:deldel2} (d), which also appears in $\del_3^2 (\xaa, M, \sigma)$. In addition, $\del_3 \del_2 (\xaa, M, \sigma)$ contains terms as in Figure~\ref{fig:deldel2} (e), which cancel out certain terms from $\del_3^2(\xaa, M, \sigma)$.

\begin{figure}[ht]
\begin{center}
\input{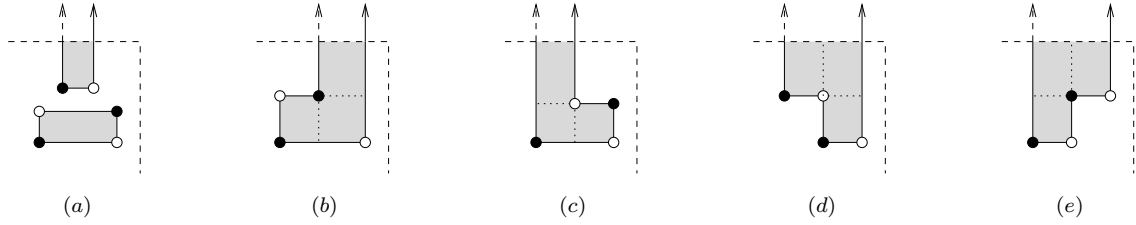}
\end{center}
\caption {{\bf Cancellations between $\del_3 \del_2$, $\del_2 \del_3$, and $\del_3^2$.} The initial point $(\xaa, M, \sigma)$ is shown by black dots and the solid arrow. The final point, shown by white dots and the dashed arrow, appears as a term in $\del^2(\xaa, M, \sigma)$ in two different ways.}
\label{fig:deldel2}
\end{figure}

We are left with some other terms in $\del_3^2 (\xaa, M, \sigma)$, which cancel each other out in pairs. The possible cases are shown in Figure~\ref{fig:deldel3}. In cases (a) and (b), the final point appears as a term in $\del_3^2 (\xaa, M, \sigma)$ in two different ways, corresponding to two different orders of using the two rectangles in $\del_3$. In cases (c) and (d), applying $\del_3$ twice yields zero, regardless of the order in which we use the rectangles: either we encounter a rectangle that is not empty (i.e., it has a generator point in its interior), or else we get a double crossing in the nilCoxeter element, which gets set to zero.

\begin{figure}[ht]
\begin{center}
\input{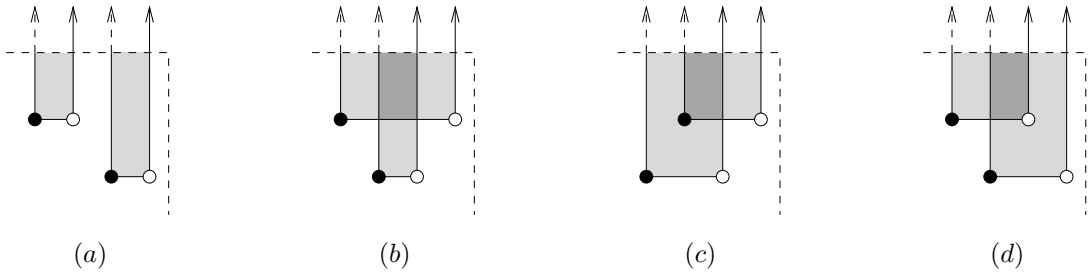}
\end{center}
\caption {{\bf Cancellations between terms in $\del_3^2$.} The conventions are the same as in Figure~\ref{fig:deldel2}, and the darker shading signifies multiplicity two.  }
\label{fig:deldel3}
\end{figure}
\end {proof}

Let us now define the action of the top algebra-module $\T(k')$ on $\cpaa.$ Recall that $\overline \T(k')= \oplus_{m=0}^{k'} \T(k')_m$ is multiplicatively generated by the elements $I_S, \rho_{i, j}$ and $\mu_i,$ as in Section~\ref{sec:stam}.  The action of an idempotent $I_S$ is by
\begin {equation}
\label {eq:aat1}
 (\xaa, M , \sigma) *  I_S = \begin{cases}
(\xaa, M, \sigma) & \text{if } S = S_{\xaa}\\
0 & \text{otherwise.}
\end {cases} 
\end {equation} 

\nid The action of a chord $\rho_{i,j}$ is just as in $CPA^-$:
\begin {equation}
\label {eq:aat2}
(\xaa, M, \sigma) * \rho_{i, j} = \sum_{\yaa \in \S(\haa)} \sum_{H \in \Halfx(\xaa, \yaa; \rho_{i,j})} U(H) \ (\yaa, M, \sigma).
\end {equation}

\noindent Here $\Halfx(\xaa, \yaa; \rho_{i,j})$ is the space of empty half-strips with the bottom-left corner at a point of $\xaa,$ the top-left corner at a point of $\yaa$, and the right edge along $\rho_{i,j}$---the same kind of half-strips as those used in the definition of $CPA^-$ in Section~\ref{sec:cpad}. Observe also that the right-hand side of \eqref{eq:aat2} contains at most one term.

More interesting is the action of the half-chords $\mu_i.$ This is basically a count of ``quarter-strips'', remnants of rectangles in $\H$ that are cut by both interfaces $\ell$ and $\ell'.$ More precisely, a quarter-strip is a rectangle such that its bottom-left corner is a point $p_i \in \xaa$, its right edge is on $\ell$ starting at height $i,$ and its top edge is on $\ell'$ starting at the coordinate $\eps_{\xaa}(i) =j.$ If such a quarter-strip $Q$ contains no $X$ markings nor other points of $\xaa$ in its interior, we set $\Quartx(\xaa; \mu_i, \xi_j) = \{Q\},$ and let $U(Q)$ be the product of the $U$ variables corresponding to the $O$ markings inside $Q.$ For all other $\xaa, i$, and $j$, we let $\Quartx(\xaa; \mu_i, \xi_j) = \emptyset.$ The action of $\mu_i$ on $\cpaa$ is then given by
\begin {equation}
\label {eq:aat3}
(\xaa, M, \sigma) * \mu_i =  \sum_{Q \in \Quartx(\xaa; \mu_{i}, \xi_j)} U(Q) \ (\xaa \setminus \{p_i \}, M \cup \{ j \}, \sigma \cup_{M} j).
\end {equation}

Here, if we let $n$ be the number of elements of $M$ smaller than $j$, then for $\sigma\in \nil_m$ corresponding to a bijection $w: [m] \to [m],$ we let $\sigma \cup_M j \in \nil_{m+1}$ be the element corresponding to the bijection $\tilde w: [m+1] \to [m+1]$ given by
$$ \tilde w (l) = \begin {cases} 
w(l) & \text{if } l \leq n, \\
m+1 & \text{if } l=n+1,\\
w(l-1) & \text{if } l > n+1.
\end {cases}   $$

\nid This is easy to visualize in the graphical interpretation of $\nil$: to get $\sigma \cup_M j$ from $\sigma,$ we insert a new strand starting at the relative position of $j$ in $M$, and ending at the rightmost point on the top edge of the dotted square. See Figure~\ref{fig:cpaa2} for an example.

\begin{figure}[ht]
\begin{center}
\input{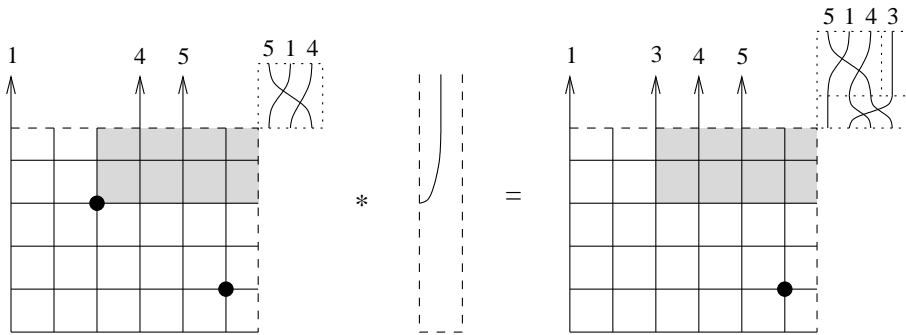}
\end{center}
\caption {{\bf The action of a half-chord on $\cpaa.$} Here, the half-chord $\mu_4$ acts on the element from Figure~\ref{fig:cpaa}. The shaded area is the corresponding quarter-strip.}
\label{fig:cpaa2}
\end{figure}

\begin {lemma}
\label {lemma:cpaaT}
The equations \eqref{eq:aat1}, \eqref{eq:aat2}, \eqref{eq:aat3} turn $\cpaa$ into a differential $ \T(k')$-module with respect to the horizontal $*$ multiplication; that is, they induce well-defined chain maps
$$ \cpaa_m \otimes \T(k')_{m'}  \longrightarrow \cpaa_{m+m'}$$
satisfying associativity relations of the form \eqref{eq:alc1'}.
\end {lemma}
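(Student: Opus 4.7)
The strategy is to use the generator-and-relations presentation of $\ol{\T}(k')$ from Section~\ref{sec:stam}: it suffices to check (a) that the formulas \eqref{eq:aat1}, \eqref{eq:aat2}, \eqref{eq:aat3} respect the multiplicative relations among $I_S$, $\rho_{i,j}$, $\mu_i$ (namely \eqref{eq:rel1}--\eqref{eq:rel7} and \eqref{eq:rel1'}--\eqref{eq:rel5'}), so that the action extends consistently to all of $\ol{\T}(k')$ and satisfies the associativity~\eqref{eq:alc1'}; and (b) that this action is compatible with differentials, i.e.\ $\del\bigl((\xaa,M,\sigma) * a\bigr) = \del(\xaa,M,\sigma) * a + (\xaa,M,\sigma) * \del a$ for each multiplicative generator $a$, recalling that $\del I_S=0$, $\del\rho_{i,j}=\sum_{i<l<j}\rho_{l,j}*\rho_{i,l}$, and $\del \mu_i = \sum_{j>i}\mu_j*\rho_{i,j}$.

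For part (a), the idempotent relations~\eqref{eq:rel1}--\eqref{eq:rel2} are built into~\eqref{eq:aat1}; the relations~\eqref{eq:rel3}--\eqref{eq:rel7} involving only $\rho_{i,j}$'s are inherited from the corresponding computations for the $CPA^-$ module established in \cite[Proposition~7.1]{LOTplanar}, since the auxiliary data $(M,\sigma)$ is transparent under these actions. The relations~\eqref{eq:rel1'}--\eqref{eq:rel2.5'} for $\mu_i$ against idempotents hold by inspection of what rows and columns are occupied after a quarter-strip action. The essential new relation is~\eqref{eq:rel3'}, $I_S*\rho_{i,j}*\mu_j = I_S*\mu_i$, which follows by noting that the composition of a half-strip $H\in\Halfx(\xaa,\yaa;\rho_{i,j})$ with a quarter-strip from $\yaa$ via $\mu_j$ assembles uniquely into a single quarter-strip from $\xaa$ via $\mu_i$, with matching $U(H)U(Q)$ powers; and the nilCoxeter data match because $\sigma \cup_M j$ does not depend on the path taken. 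The commutation and vanishing relations~\eqref{eq:rel4'}--\eqref{eq:rel5'} follow by direct inspection: in the disjoint cases the quarter-strip and half-strip commute, and in the interleaved case $i<l<j$ a nonempty half-strip blocks any quarter-strip.

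For part (b), the cases $a=I_S$ and $a=\rho_{i,j}$ reduce to an argument entirely parallel to the $CPA^-$ computation: compositions of an empty rectangle (from $\del_2$) with a half-strip, and juxtapositions of two half-strips, cancel in pairs exactly as in \cite[Proposition~7.1]{LOTplanar}; the interaction of half-strip actions with $\del_1$ is vacuous (the nilCoxeter element is unchanged), and with $\del_3$ is by inspection (both count disjoint strip pieces). The heart of the proof is the Leibniz rule for $a=\mu_i$:
\[
\del\bigl((\xaa,M,\sigma)*\mu_i\bigr) \;=\; \bigl(\del(\xaa,M,\sigma)\bigr)*\mu_i \;+\; \sum_{j>i}(\xaa,M,\sigma)*\mu_j*\rho_{i,j}.
\]
Here one must analyze how each of $\del_1$, $\del_2$, $\del_3$ acts on a quarter-strip $Q\in\Quartx(\xaa;\mu_i,\xi_j)$. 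Horizontal breakings of $Q$ along an intermediate column $l$ produce the $\mu_l*\rho_{i,l}$ terms on the right; juxtapositions of $Q$ with an empty rectangle in $\xaa$ account for $\del_2$-compatibility; juxtapositions of $Q$ with a half-strip meeting $\ell'$ above $Q$ correspond to $\del_3$-compatibility; and the $\del_1$ terms are absorbed by the definition of $\sigma \cup_M j$ and the shifted element $\sigma^{M,j,l}$ from~\eqref{eq:smjl}.

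The main obstacle will be the precise bookkeeping of the nilCoxeter element in this last step: one must verify that the insertion operation $\sigma \mapsto \sigma \cup_M j$ commutes properly with the braiding $\sigma \mapsto \sigma^{M,j,l}$, and that seemingly extra contributions (in which an inserted strand would cross a previously inserted one twice) vanish by the nilCoxeter relation $\sigma_i^2=0$. This is precisely the computation that motivated the inclusion of $\del_3$ in \eqref{eq:delcpaa}, as indicated in the remark following Figure~\ref{fig:Dcpaa}, and it is this matching between the insert-a-strand operation and the half-strip-with-arrow-at-top contribution of $\del_3$ that makes the Leibniz rule for $\mu_i$ close up.
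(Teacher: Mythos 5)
Your proposal follows essentially the same route as the paper's proof: verify compatibility with the generator-and-relations presentation of $\ol{\T}(k')$ (with \eqref{eq:rel3'}--\eqref{eq:rel5'} handled by matching half-strip/quarter-strip decompositions), and then verify the Leibniz rule, correctly isolating the Leibniz rule for $\mu_i$ as the crux and recognizing that the pairwise cancellation of $\del_1,\del_2,\del_3$ contributions hinges on the interaction between $\sigma\mapsto\sigma\cup_M j$, $\sigma\mapsto\sigma^{M,j,l}$, and the vanishing of double-crossings via $\sigma_i^2=0$. The paper carries out the case analysis you sketch in detail (cases (a)--(j) of Figure~\ref{fig:cpaa_leibniz}), but the structure and the key mechanisms you identify are exactly those of the paper's argument.
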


\begin {proof}
The proof involves checking that the actions of the generators $I_S, \rho_{i,j}$ and $\mu_i$ are compatible with the relations \eqref{eq:rel1}-\eqref{eq:rel7} and \eqref{eq:rel1'}-\eqref{eq:rel5'}, and that the differential on $\cpaa$ is compatible with \eqref{eq:rel8} and \eqref{eq:rel6'}, that is, it satisfies the Leibniz rule.

Compatibility with the relations \eqref{eq:rel1}-\eqref{eq:rel7} involving only $I_S$ and $\rho_{i,j}$ is entirely similar to the compatibility of the action of the generators of $\alg(N, k)$ on $CPA^-,$ as in \cite[Proposition 7.1]{LOTplanar}.  In particular the action
of $\ol{\T}(k')$ is unital in the sense that $1 = \sum I_S $ does indeed function
as a unit; that is, for all $(\xaa, M, \sigma) \in \cpaa_m$, we have $(\xaa, M, \sigma) * 1 = (\xaa, M, \sigma)$.

Compatibility with the relations \eqref{eq:rel1'}-\eqref{eq:rel2.5'} is straightforward. 

Compatibility with \eqref{eq:rel3'}-\eqref{eq:rel5'} is illustrated in Figure~\ref{fig:cpaa_t}. In part (a), for example, we check compatibility with \eqref{eq:rel3'}. For $t \not \in M,$ we have
$$ ((\{x\}, M, \sigma) * \rho_{i,j}) * \mu_j = (\{y\}, M, \sigma) * \mu_j  = (\emptyset, M \cup \{t\}, \sigma \cup_M t )= (\{x\}, M, \sigma) * \mu_i. $$

 \begin{figure}[ht]
\begin{center}
\input{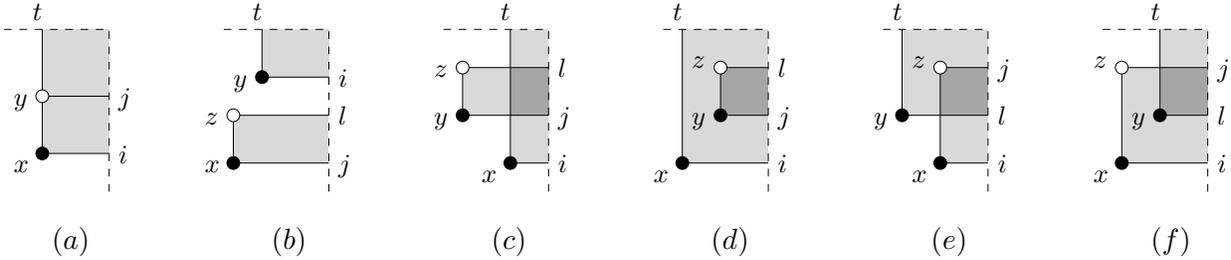}
\end{center}
\caption {{\bf The action of the top algebra-module on $\cpaa$ is well-defined.} Part (a) corresponds to relation \eqref{eq:rel3'}, parts (b), (c), and (d) to \eqref{eq:rel4'}, and parts (e) and (f) to \eqref{eq:rel5'}.}
\label{fig:cpaa_t}
\end{figure}

\nid To simplify notation, we have assumed that our generator $\xaa = \{x\}$ has only one element, but a similar string of equalities holds in general. Also, we have suppressed the $U$ powers; since the (shaded) domains involved are the same, these coincide.
 
Similarly, in parts (b) and (c) of Figure~\ref{fig:cpaa_t}, we have
$$ ((\{x,y\}, M, \sigma) *  \mu_i)  * \rho_{j,l}= (\{z\}, M \cup \{t\}, \sigma \cup_M t)= ((\{x,y \}, M, \sigma) * \rho_{j,l}) * \mu_i.$$

\nid In part (d), the actions of $\mu_i * \rho_{j,l}$
and $\rho_{j, l} * \mu_i$ are both zero.

In part (e),
$$ ((\{x,y \}, M, \sigma) * \rho_{i,j}) * \mu_l = (\{y,z\}, M, \sigma) * \mu_l = 0,$$
because the respective quarter-strip is not empty.  In part (f), already the action of $\rho_{i,j}$ is zero because the half strip in question is not empty.  
  
We now turn to verifying the Leibniz rule. When we look at multiplication by $I_S$ or $\rho_{i,j},$ the verification is entirely similar to that in \cite[Proposition 7.1]{LOTplanar}. It remains to check that the differential is compatible with multiplication by a half-chord, that is,
\begin {equation}
\label{eq:cpaa_leibniz}
\del((\xaa, M, \sigma) * \mu_i) + (\del(\xaa, M, \sigma)) * \mu_i + (\xaa, M, \sigma) * \del \mu_i = 0.
\end {equation} 

Recall that $\del \mu_i = \sum_{j > i} \mu_j * \rho_{i,j}$, and that the differential on
$\cpaa$ is $\partial = \partial_1 + \partial_2 + \partial_3$, where $\partial_1$ operates on the
nilCoxeter element $\sigma$, while $\partial_2$ and $\partial_3$ count rectangles and certain half-strips in $\haa$, respectively. Thus, the first two terms in \eqref{eq:cpaa_leibniz} decompose as triple sums. Each of the seven resulting terms in \eqref{eq:cpaa_leibniz} is a sum in itself; when we expand them out, the contributions cancel in pairs. We describe this cancellation, referring to
the cases illustrated in Figure~\ref{fig:cpaa_leibniz}.  We discuss particular cases where $\xaa$ is
rather small, but the general cases are entirely similar.

 \begin{figure}[ht]
\begin{center}
\input{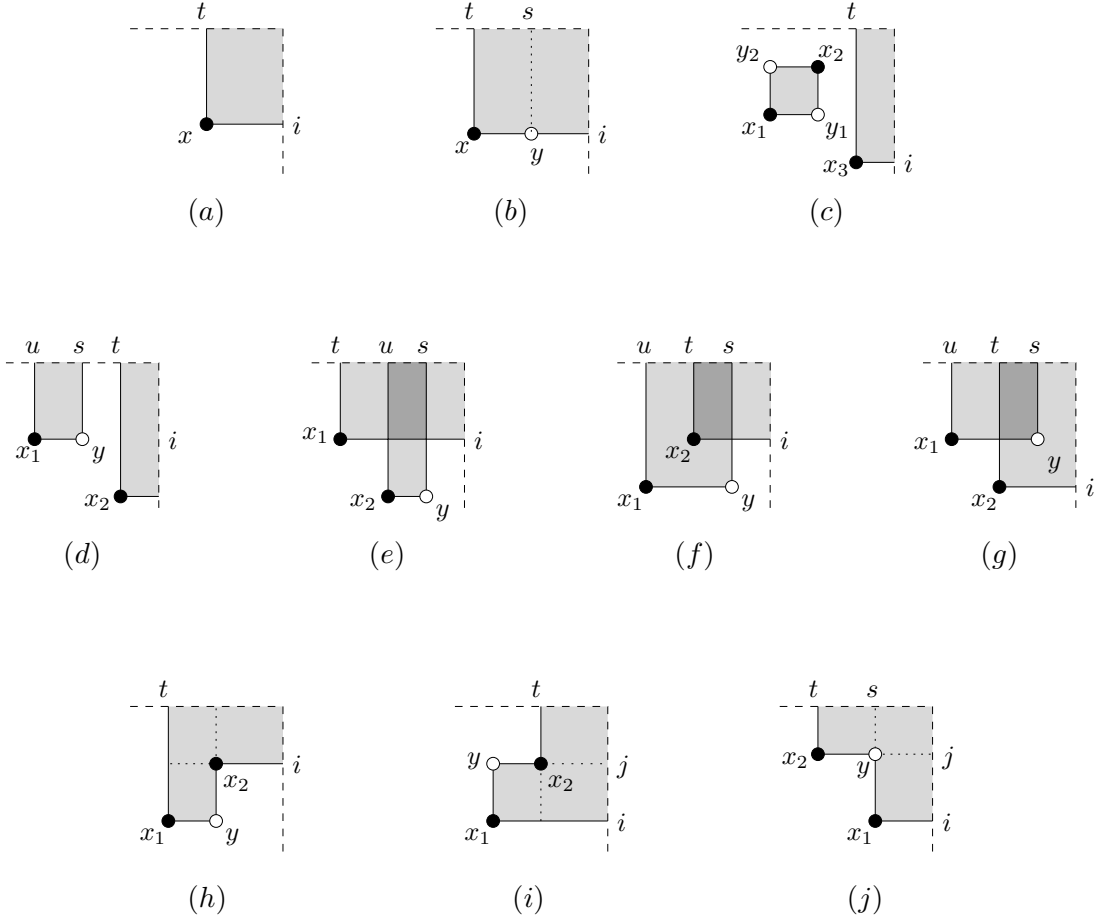}
\end{center}
\caption {{\bf The Leibniz rule for the action of a half-chord.} }
\label{fig:cpaa_leibniz}
\end{figure}

In part (a), supposing that $ \xaa=\{x\}$, the term
$$ (\emptyset, M \cup \{t\}, \del \sigma \cup_M t)  $$
appears in both $\del_1((\{x\}, M, \sigma) * \mu_i)$ and $(\del_1(\{x\}, M, \sigma)) * \mu_i.$ Note that, for general $\sigma, M, t,$ we have
$$ \del(\sigma \cup_M t) = (\del \sigma) \cup_M t +  \sum_{\{s \in M \cup \{t\} | s > t\}} \sigma^{M,t,s} \cup_{(M \cup \{t\} \setminus \{s\} )}  s.$$

\nid The last summation makes an appearance in part (b) of Figure~\ref{fig:cpaa_leibniz}, where, for $s \in M,$ the term
$$  (\emptyset, M \cup \{t\}, \sigma^{M,t,s} \cup_{(M \cup \{t\} \setminus \{s\} )}  s)$$
contributes both to $\del_1((\{x\}, M, \sigma) * \mu_i)$ and to 
$$(\del_3(\{x\}, M, \sigma)) * \mu_i =(\{y\}, M \cup \{t\} \setminus \{s\}, \sigma^{M, t, s}) * \mu_i.$$

In part (c), the term
$$ (\{y_1, y_2\}, M \cup \{t\}, \sigma \cup_M t)$$
appears in both $\del_2((\{x_1, x_2, x_3\}, M, \sigma) * \mu_i)$ and $(\del_2(\{x_1, x_2, x_3\}, M, \sigma)) * \mu_i.$

In part (d) and in part (e), supposing that $s \in M$, the term 
$$ (\{y\},  M \cup \{t,u\} \setminus \{s\}, (\sigma \cup_M t)^{M \cup \{t\}, u, s}) $$
appears in both $\del_3((\{x_1, x_2\}, M, \sigma) * \mu_i)$ and $(\del_3(\{x_1, x_2\}, M, \sigma)) * \mu_i.$

In part (f) and in part (g), for $s \in M$, the term 
$$ (\{y\},  M \cup \{t,u\} \setminus \{s\}, (\sigma \cup_M t)^{M \cup \{t\}, u, s}) $$
appears in $\del_3((\{x_1, x_2\}, M, \sigma) * \mu_i)$, but is actually zero because of double crossings in the nilCoxeter element. There is also no term of the form
$$ (\{y\},  M \cup \{t,u\} \setminus \{s\}, \sigma^{M, u, s} \cup_{M \cup \{u\} \setminus \{s\} } t) $$
in $(\del_3(\{x_1, x_2\}, M, \sigma)) * \mu_i$, because one of the respective rectangles is not empty. 

In part (h), the term
$$ (\{y\},  M \cup \{t\}, \sigma \cup_M t) $$
appears in both $(\del_2(\{x_1, x_2\}, M, \sigma)) * \mu_i$ and
$ \del_3( (\{x_1, x_2\}, M, \sigma) * \mu_i)$.

In part (i), the term
$$ (\{y\},  M \cup \{t\}, \sigma \cup_M t) $$
appears both in $(\del_2(\{x_1, x_2\}, M, \sigma)) * \mu_i$ and in 
$$(\{x_1, x_2\}, M, \sigma) * (\del \mu_i) = ((\{x_1, x_2\}, M, \sigma) * \mu_j) * \rho_{i,j}.$$

Finally, in part (j) we have the term
$$  (\{y\},  M \cup \{t\}, \sigma \cup_M t),$$
which contributes to $\del_3((\{x_1, x_2\}, M, \sigma) * \mu_i)$ and $(\{x_1, x_2\}, M, \sigma) * (\del \mu_i).$
 \end {proof}

The next step is to define an action of the right algebra-module $\R(k)$ from Section~\ref{sec:ram} on $\cpaa.$ Recall that elements of $\R(k)_{m,p}$ are required to act on $\cpaa_m$ and produce elements of $\cpaa_p$, and that $\ol{\R}(k) :=
\prod_{m,p} \R(k)_{m,p}$ has four kinds of multiplicative generators: idempotents,
chords, nilCoxeter elements, and caps.

We define the action of an idempotent $J_{S, m}$ on $(\xaa, M, \sigma) \in \cpaa_m$ to be
\begin {equation}
\label {eq:aar1}
 (\xaa, M , \sigma) \cdot  J_{S,m} =\begin{cases}
(\xaa, M, \sigma) & \text{if } S= M \cup \im(\eps_{\xaa}), \,\\
0 & \text{otherwise.}
\end {cases} 
\end {equation} 

The action of a chord $\lambda_{i,j}$ on a generator  $(\xaa, M, \sigma) \in \cpaa$ gives zero unless $i \in M \cup \im(\eps_{\xaa})$ and $\ j \not\in M \cup \im(\eps_{\xaa})$. If $ i \in \im(\eps_{\xaa})$, then the action is also zero if there exist elements of $M$ between $i$ and $j$; if there are no such elements, the action is similar to \eqref{eq:aat2}:
\begin {equation}
\label {eq:aar2}
(\xaa, M, \sigma) \cdot \lambda_{i, j} = \sum_{\yaa \in \S(\haa)} \sum_{H \in \Halfx(\xaa, \yaa; \lambda_{i,j})} U(H) \ (\yaa, M, \sigma),
\end {equation}
where $ \Halfx(\xaa, \yaa; \lambda_{i,j})$ is the set of empty half-strips with the top edge being the segment $\lambda_{i,j}$ on $\ell'.$ Here, an empty strip is required to contain no components of $\xaa$ in its interior, no $X$ markings in its interior, and also no arrows on its top edge. Note that the sum on the right-hand side of \eqref{eq:aar2} has at most one term.

If $i \in M$, we set
\begin {equation}
\label {eq:aar2.5}
(\xaa, M, \sigma) \cdot \lambda_{i, j} = (\xaa, M \cup \{j\} \setminus \{i\}, \sigma^{M, j, i}),
\end {equation}
where 
$$ \sigma^{M, j, i} =  (\sigma_{j'-1} \sigma_{j'-2} \dots \sigma_{i'+1}) \cdot \sigma$$
with $i' = \# \{u \in M | u < i \}$ and $j' =  \# \{u \in M | u < j \}.$ In the graphical representation, we obtain $\sigma^{M, j, i}$ from $\sigma$ by moving the starting point of the strand of $\sigma$ paired with the arrow at $i \in M$ a few positions to the right, so that it becomes paired with the arrow at $j$. (This is similar to Equation~\eqref{eq:smjl}, except here we have $i < j$.) See Figure~\ref{fig:cpaa25} for an example.

\begin{figure}[ht]
\begin{center}
\input{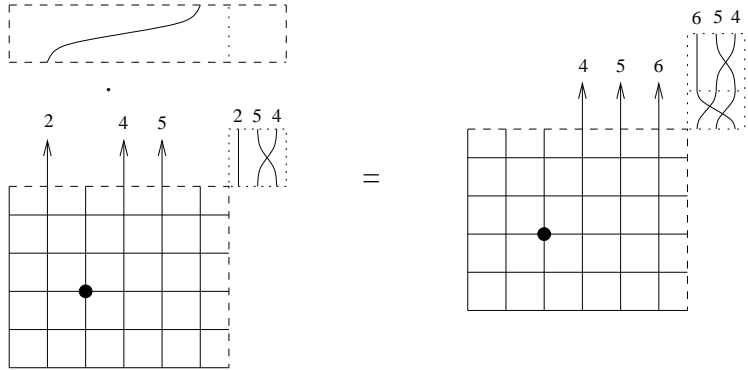}
\end{center}
\caption {{\bf Equation~\eqref{eq:aar2.5}.} The horizontal chord $\lambda_{2,6}$ acts on an element of $\cpaa$ by moving an arrow rightward. }
\label{fig:cpaa25}
\end{figure}

The action of a nilCoxeter element is simply:
\begin {equation}
\label {eq:aar3}
(\xaa, M, \sigma) \cdot \sigma_i = \begin {cases} (\xaa, M, \sigma \cdot \sigma_i) & \text{if } i < m, \\
0 & \text{if } i \geq m.
\end {cases}
\end {equation}

Finally, the action of a cap $\xi_i$ on $(\xaa, M, \sigma) \in \cpaa_m$ is as follows. The action results in zero unless $i \in M$ and the arrow situated at position $i \in M$ is paired with the top leftmost strand of the nilCoxeter element $\sigma$---in particular the action is zero if $m=0$. If the pairing condition is satisfied, we set
\begin {equation}
\label {eq:aar4}
(\xaa, M, \sigma) \cdot \xi_{i} = (\xaa, M \setminus \{ i \}, \sigma \setminus_M i),
\end {equation}
where $\sigma \setminus_M i$ is obtained from $\sigma$ by deleting the strand whose top end is first from the left. An example is shown in Figure~\ref{fig:cpaa3}.

\begin{figure}[ht]
\begin{center}
\input{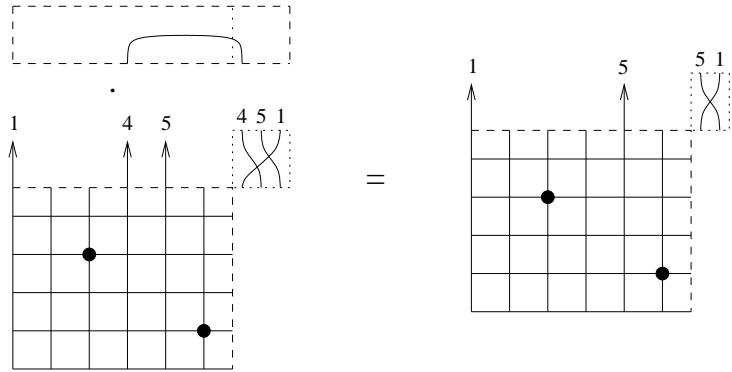}
\end{center}
\caption {{\bf The action of a cap on $\cpaa.$} Here, the cap $\xi_{4}$ acts by deleting the arrow and the strand labeled $4$.}
\label{fig:cpaa3}
\end{figure}

\begin {lemma}
\label {lemma:cpaaR}
The equations \eqref{eq:aar1}-\eqref{eq:aar4} turn $\cpaa$ into a unital differential $ \R(k)$-module with respect to the vertical $\cdot$ multiplication; that is, they induce well-defined chain maps
$$ \cpaa_m \otimes \R(k')_{m,n}  \longrightarrow \cpaa_{n}$$
satisfying associativity relations of the form \eqref{eq:alc2'}.
\end {lemma}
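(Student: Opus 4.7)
The plan is to verify, one relation at a time, that the formulas \eqref{eq:aar1}--\eqref{eq:aar4} descend to a well-defined action of $\ol\R(k)$ on $\cpaa$ with respect to the presentation by the generators $J_{S,m}, \lambda_{i,j}, \sigma_i, \xi_i$ and the relations \eqref{eq:rel1''}--\eqref{eq:rel12''}, and then to check the Leibniz rule against the differentials \eqref{eq:rel15''}--\eqref{eq:rel18''}. Unitality and the associativity relation \eqref{eq:alc2'} for the vertical multiplication then follow directly from the definitions.

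First I would handle the relations not involving caps. The relations among the $J_{S,m}$ and $\lambda_{i,j}$ (analogues of \eqref{eq:rel1}--\eqref{eq:rel7}) reduce, for the part of the action involving \eqref{eq:aar2}, to the same kind of half-strip analysis that appears in \cite[Proposition 7.1]{LOTplanar}, since the half-strips used here are geometrically the same as those used for $CPA^-$ (with the top edge on $\ell'$ rather than the right edge on $\ell$). The new phenomenon is formula \eqref{eq:aar2.5}, where $\lambda_{i,j}$ simply slides an arrow in $M$ from position $i$ to position $j$ and pre-composes $\sigma$ with an ascending product of elementary transpositions; compatibility with the relations comes from the fact that for arrows living purely in $M$, the chords $\lambda_{i,j}$ just permute which strand of $\sigma$ is attached to which arrow, and the same factorisation identities hold for the elementary transpositions. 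The nilCoxeter relations \eqref{eq:nc1}--\eqref{eq:nc3} and their interactions \eqref{eq:rel2''}--\eqref{eq:rel4''} hold on $\cpaa$ because the action of $\sigma_i$ is literally concatenation in $\nil_m$ from the right, and because the action of $\lambda_{i,j}$ on arrows in $M$ only multiplies $\sigma$ by elements supported away from the top strand used by a cap.

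Next I would verify the cap relations \eqref{eq:rel5''}--\eqref{eq:rel12''}. This is the main geometric content: the operator $\xi_i$ is an algebraic way to remove an arrow once it has been brought into the leftmost position at the top of the nilCoxeter box. The idempotent compatibilities \eqref{eq:rel5''}--\eqref{eq:rel7''} are immediate from \eqref{eq:aar4}. Relation \eqref{eq:rel8''} is where quarter-strips meet chords: composing a sliding $\lambda_{i,j}$ with a cap $\xi_j$ on an arrow at position $j$ has the same effect on $(\xaa,M,\sigma)$ as directly applying $\xi_i$ after renaming, because in both cases one strand of $\sigma$ is deleted, and the two pairings (arrow-at-$i$ with the old leftmost strand, or arrow-at-$j$ with the leftmost strand after a rightward slide) are the same pairing. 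Relations \eqref{eq:rel9''} and \eqref{eq:rel10''} are straightforward case analyses based on whether the positions $i,j,l$ interleave, and \eqref{eq:rel11''} is essentially the identity in $\nil$ saying that swapping the order of deleting two strands introduces a $\sigma_1$ crossing at the surviving top; this matches the definition of $\sigma \setminus_M i$. Finally \eqref{eq:rel12''} records that the action of $\sigma_v$ on the $(m+1)$-strand box $\sigma \cup_M t$ corresponds, after popping off the leftmost strand with $\xi_i$, to the action of $\sigma_{v+1}$ on the $m$-strand box.

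The main obstacle, and the bulk of the work, is the Leibniz rule, i.e.\ checking that $\partial = \partial_1 + \partial_2 + \partial_3$ on $\cpaa$ is compatible with \eqref{eq:rel15''}--\eqref{eq:rel18''} for each type of generator. Compatibility with $\partial J_{S,m} = 0$ and with $\partial \lambda_{i,j}$ is modelled on the analysis already carried out in Lemma~\ref{lemma:cpaaT} for $I_S$ and $\rho_{i,j}$: in the cases where $\lambda_{i,j}$ acts by \eqref{eq:aar2}, the analysis is identical to that for $\mathit{CPA}^-$, while in the case where $\lambda_{i,j}$ slides an arrow via \eqref{eq:aar2.5}, the identity $\partial (\sigma^{M,j,i}) = (\partial\sigma)^{M,j,i} + (\partial(\sigma_{j'-1}\cdots\sigma_{i'+1}))\cdot \sigma$ produces exactly the terms of $(\xaa,M,\sigma)\cdot \partial\lambda_{i,j} = \sum_l (\xaa,M,\sigma)\cdot(\lambda_{l,j}\cdot\lambda_{i,l})$ that are not cancelled by $\partial_1$. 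The most delicate checks are those involving $\partial \sigma_i = \sum_{S,m>i}J_{S,m}$, which matches the action of $\partial_1$ on $\cpaa$, and $\partial \xi_i = \sum_{j>i}\xi_j\cdot\lambda_{i,j}$; here the relevant cancellations all stem from the same half-strip/nilCoxeter bookkeeping as in the proof of Lemma~\ref{lemma:dcpaa}, combined with the quarter-strip picture from the proof of Lemma~\ref{lemma:cpaaT}. After these verifications, unitality (the analogue of \eqref{eq:uni2}) holds because $\sum_S J_{S,m}$ acts as the identity on each $\cpaa_m$, and associativity \eqref{eq:alc2'} is automatic since $\R(k)$ acts on the right by concatenation of diagrams.
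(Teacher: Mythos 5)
Your proposal follows essentially the same strategy as the paper's proof: verify the action is compatible with the defining relations of $\R(k)$ by treating the idempotent/chord/nilCoxeter relations as analogues of the $CPA^-$ verifications in \cite[Proposition 7.1]{LOTplanar} and Lemma~\ref{lemma:cpaaT}, handle the cap relations \eqref{eq:rel5''}--\eqref{eq:rel12''} as the new geometric content, and then check the Leibniz rule generator by generator against \eqref{eq:rel15''}--\eqref{eq:rel18''}, with the cap case requiring the same style of case analysis as the half-chord Leibniz argument. The only quibbles are cosmetic: you flag the $\sigma_i$ Leibniz check as among the ``most delicate,'' whereas (as you yourself note) it just amounts to matching $\partial\sigma_i = \sum_{m>i}J_{S,m}$ with $\partial_1$ and is immediate, and your phrasing of \eqref{eq:rel12''} in terms of $\sigma\cup_M t$ invokes the top-algebra action in a spot where only the right-module action is in play---but neither affects the substance of the argument.
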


\begin {proof}
We need to check that the actions of the generators of $\R(k)$ are compatible with the relations described in Section~\ref{sec:ram}, and that the differential on $\cpaa$ is compatible with the differential on $\R(k)$.

Checking the formulas involving only the generators $J_{S,m}, \lambda_{i,j}$ and $\sigma_i$ is straightforward---many of the arguments run along the same lines as in \cite[Proposition 7.1]{LOTplanar} or Lemma~\ref{lemma:cpaaT} above. In particular, the action of
$\ol{\R}(k)$ is unital in the sense that $1_m := \sum_{S \subseteq [k]} J_{S,m}$ functions as a unit; that is, for all $(\xaa, M, \sigma) \in \cpaa_m$, we have $(\xaa, M, \sigma) \cdot 1_m = (\xaa, M, \sigma)$.

Compatibility with the relations \eqref{eq:rel5''}-\eqref{eq:rel7''}, involving only the idempotents and the caps, is also
clear. Compatibility with the relations \eqref{eq:rel8''}, \eqref{eq:rel9''} and \eqref{eq:rel10''} is
similar in spirit to the compatibility of the top algebra action with  \eqref{eq:rel3'}, \eqref{eq:rel4'}, and \eqref{eq:rel5'} respectively---see the proof of Lemma 7.4---except using caps
instead of half-chords.  Compatibility with \eqref{eq:rel11''} is illustrated in Figure~\ref{fig:cpaa2caps}. The action is compatible with \eqref{eq:rel12''} because acting first
by a cap removes the first nilCoxeter strand without otherwise disturbing the
nilCoxeter action.

\begin{figure}[ht]
\begin{center}
\input{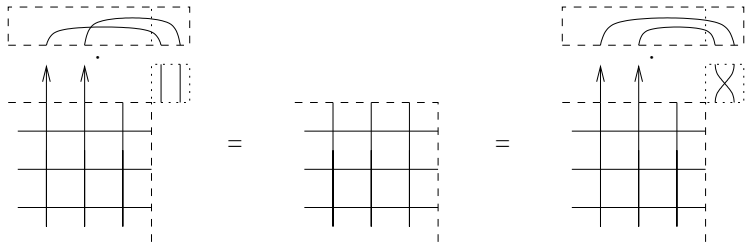}
\end{center}
\caption {{\bf Compatibility of the $\R(k)$-module action with the relation $\xi_i \cdot \xi_j = \sigma_1 \cdot \xi_j \cdot \xi_j$ for $i < j$.}}
\label{fig:cpaa2caps}
\end{figure}

To check compatibility of the differentials, it is enough to check the Leibniz rule for the action of each of the generators
$J_{S,m}, \lambda_{i,j}, \sigma_i$, and $\xi_i$.  The differentials of these
generators were given by equations \eqref{eq:rel15''}, \eqref{eq:rel16''}, \eqref{eq:rel17''}, and \eqref{eq:rel18''}, respectively.  

To check Leibniz for the action of $J_{S,m}$, note that for any $(\xaa,M,\sigma) \in
\cpaa$, all summands $(\yaa,N,\tau) \in \partial(\xaa,M,\sigma)$ have $N \cup
\im(\epsilon_{\yaa}) = M \cup \im(\epsilon_{\xaa})$; this implies that
$$\partial((\xaa,M,\sigma) \cdot J_{S,m}) = (\partial(\xaa,M,\sigma)) \cdot
J_{S,m}.$$
The Leibniz rule for the action of $\lambda_{i,j}$ follows
from a generalization of the argument used in \cite[Proposition 7.1]{LOTplanar}.
 The Leibniz rule for the action of $\sigma_i$ follows directly from the definitions. Finally, Leibniz for the cap action follows by a case by case analysis analogous to the
proof of Leibniz for half-chords in Lemma~\ref{lemma:cpaaT}.  Two of these cases are
illustrated in Figure~\ref{fig:cpaa_leibniz2}.
\end {proof}

\begin{figure}[p]
\begin{center}
\input{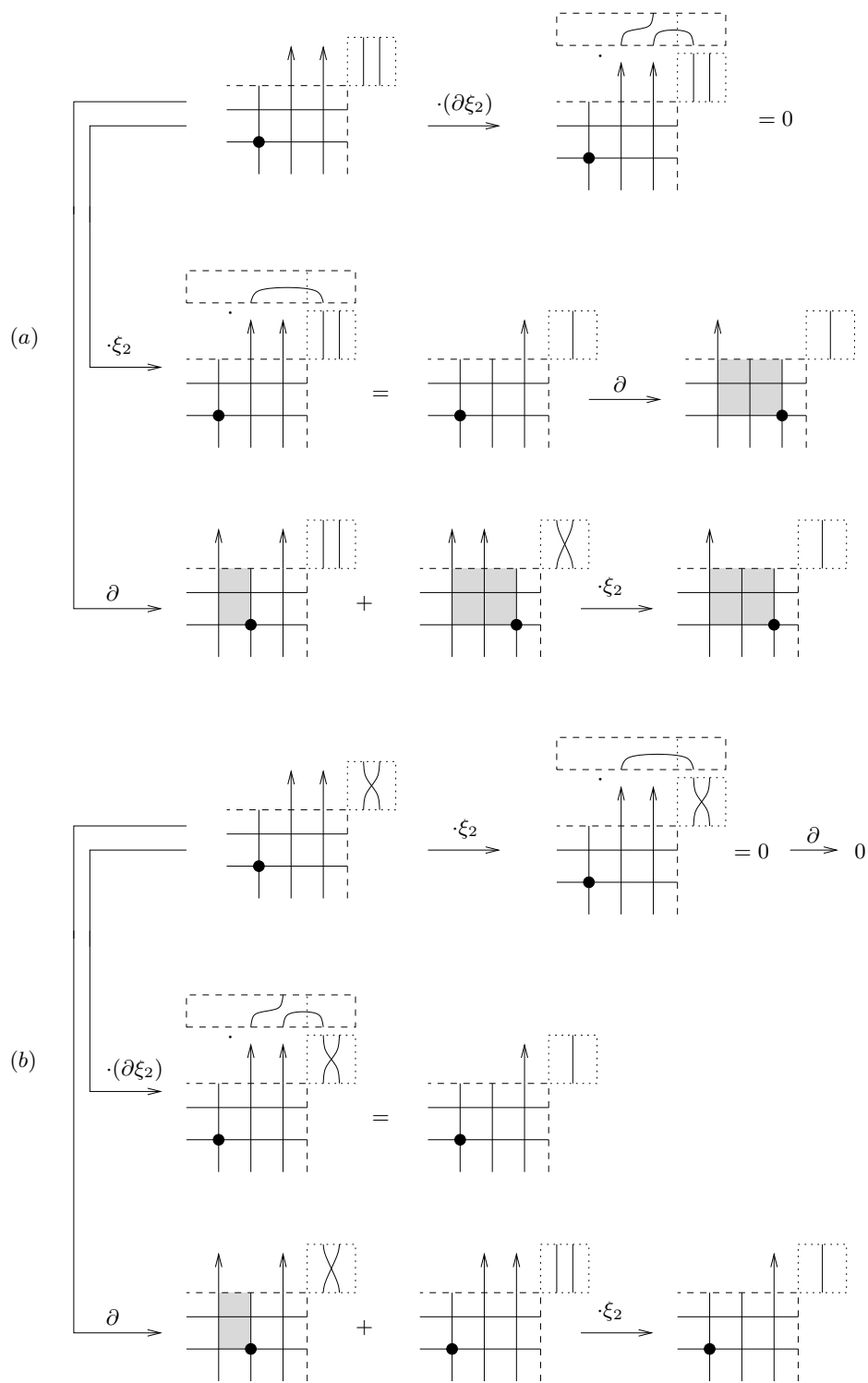}
\end{center}
\caption {{\bf The Leibniz rule with respect to multiplication by a cap $\xi_2$.} The final terms cancel each other out in pairs.}
\label{fig:cpaa_leibniz2}
\end{figure}

\begin {lemma}
\label{lemma:cpaaA}
For any $m, m', p$ and any $(\xaa, M, \sigma)\in \cpaa_m, t\in \T(k')_{m'}, r \in \R(k)_{m,p}$ and $a \in \nil_{m'},$ the following local commutation relation holds:
\begin {equation}
\label {eq:cpaa_lc}
 ((\xaa, M, \sigma) * t) \cdot (r * a) = ((\xaa, M, \sigma) \cdot r) * (t \cdot a).
 \end {equation}
\end {lemma}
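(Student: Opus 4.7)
The plan is to reduce the identity to a check on multiplicative generators and then perform a case analysis guided by the geometric picture. Both $\ol\T(k')$ and $\ol\R(k)$ have explicit presentations: $\ol\T(k')$ is generated by the $I_S$, $\rho_{i,j}$ and $\mu_i$ under the horizontal product $*$ (Section~\ref{sec:stam}), while $\ol\R(k)$ is generated by the $J_{S,m}$, $\lambda_{i,j}$, $\sigma_i$ and $\xi_i$ under the vertical product $\cdot$ (Section~\ref{sec:ram}). Moreover, $\nil_{m'}$ is generated as an algebra by the elementary transpositions $\sigma_j$. By $\kk$-linearity, together with the associativity identities already established in Lemmas~\ref{lemma:cpaaT} and \ref{lemma:cpaaR}, it suffices to check \eqref{eq:cpaa_lc} for a generator $(\xaa,M,\sigma)$ and generators $t$, $r$, $a$ chosen from these lists.

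The geometric content behind the identity is that the horizontal multiplication by $t$ and by $a$ operates on the region to the right of (and above) the original diagram, while the vertical multiplication by $r$ operates on the original $\haa$ region together with its top-arrow data; the only interface between the two sides is the column/row bookkeeping of strands emerging at the top. Thus the easy cases are those in which $t \in \{I_S, \rho_{i,j}\}$ and $r \in \{J_{T,m}, \lambda_{p,q}, \sigma_i\}$: here $t$ does not create or destroy arrows in $M$, and $r$ does not interact with the new columns introduced by $t$, so both sides of \eqref{eq:cpaa_lc} reduce to the same partial half-strip count (possibly zero) in the original $\haa$, unchanged by horizontal extension. Compatibility of idempotents is immediate from \eqref{eq:aat1} and \eqref{eq:aar1}, and the $\rho_{i,j}\leftrightarrow\lambda_{p,q}$ case uses that these half-strips live on opposite sides of the quadrant and therefore commute.

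The substantive cases are those in which $t=\mu_i$ (inserting a new arrow and a new nilCoxeter strand via $\sigma\cup_M \cdot$) or $r=\xi_j$ (removing the arrow paired with the leftmost top strand), and those in which $a\neq 1$. When $t=\mu_i$ and $r$ is an idempotent or chord not involving caps, the new arrow produced by $\mu_i$ sits strictly to the right of the arrows in $M$ (since it comes from a quarter-strip crossing $\ell$ at height $i$), so it commutes with the action of $\lambda_{p,q}$ and $\sigma_i$ in the expected way; similarly $\xi_j$ against $I_S$ or $\rho_{p,q}$ on the $t$-side presents no interaction. When $t = \mu_i$ and simultaneously $r = \xi_j$, both sides may either compute the same quarter-strip twice over (giving the same $U$-weighted contribution and matching $(\xaa\setminus\{p_i\}, M, \sigma)$) or vanish because of the relative positioning of the freshly created arrow against the arrow killed by $\xi_j$; this is precisely checked using the definition \eqref{eq:aar4} of $\sigma\setminus_M i$ alongside the insertion rule $\sigma\cup_M j$. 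The $a$-bookkeeping is handled uniformly: horizontally concatenating with $a \in \nil_{m'}$ on the $r$-side inserts the same crossings among the rightmost $m'$ top strands as multiplying the nilCoxeter component of $t$ by $a$ does, so the two sides agree up to reassociation of $\sigma\cup_M\cdot$ with $\cdot a$.

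The main obstacle will be the nilCoxeter bookkeeping when $t$ contains a half-chord $\mu_i$: the position at which the new strand is inserted into $\sigma$ depends on the interleaving of $M$ with the new column, and this interleaving must be shown to be stable under first performing $r$ (which can reindex $M$ via $\lambda_{p,q}$ or shrink $M$ via $\xi_j$). This is where the explicit shift formulas for $\sigma^{M,j,l}$, $\sigma\cup_M j$, and $\sigma\setminus_M j$ become essential, and I would organize the remaining verification as a table of small pictures, one per ordered pair $(t,r)$, along the lines of Figures~\ref{fig:cpaa_t} and \ref{fig:cpaa_leibniz2}, confirming in each case that both sides of \eqref{eq:cpaa_lc} produce the same generator with the same $U$-coefficient.
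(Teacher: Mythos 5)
The overall architecture of your proposal — reduce to multiplicative generators and verify by a diagrammatic case analysis — matches the paper's. But two points need attention. First, the paper's reduction is more careful than "$\kk$-linearity plus the module axioms": it introduces the notion "$(r,a)$-satisfied," observes that the local-commutation property is multiplicative in $(r,a)$ via \eqref{eq:alc3}, then splits $(r, a) = (r \cdot J_p, 1_{m'} \cdot a)$ to reduce to checking $(r, 1_{m'})$ and $(J_p, a)$. The second of these is precisely the top-algebra-module compatibility already established, so $a$ drops out of the case analysis entirely. Your "uniform $a$-bookkeeping" argument gestures at the same conclusion, but by tracking $a$ through the case analysis you multiply the cases unnecessarily and leave the chaining argument implicit; you also miss the subtlety that the paper's case analysis is performed against \emph{local} generators $J_m \lambda_{i,j} J_m$, $J_m \sigma_i J_m$, $J_m \xi_{i,j} J_{m-1}$ (not the infinite-sum generators $\lambda_{i,j}$, etc.), because the relation is only expected to hold componentwise in $\R(k)_{m,p}$.

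Second, and more substantively: your claim that when $t = \mu_i$ and $r$ is a chord or nilCoxeter generator, "the new arrow produced by $\mu_i$ sits strictly to the right of the arrows in $M$" is false. By \eqref{eq:aat3} the new arrow lands at column $j = \eps_{\xaa}(i)$, which is simply whatever column the point $p_i$ occupied; this has no a priori relation to the columns of the existing arrows. The example in Figure~\ref{fig:cpaa2} already exhibits the new arrow landing \emph{between} existing arrows ($j = 3$ inserted into $M = \{1,4,5\}$). Consequently the (local chord, half-chord) case is not dispatched by a commuting-supports argument: the $\lambda_{p,q}$-action can move or collide with the arrow introduced by $\mu_i$, and these interactions (including vanishing contributions from double crossings) are exactly what the paper illustrates in Figures~\ref{fig:cpaa_lc2}, \ref{fig:cpaa_lc2b}, and \ref{fig:cpaa_lc2c}. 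Your "table of small pictures" would have to include these non-commuting sub-cases, which you presently classify as easy on the basis of an incorrect geometric premise.
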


\begin {proof}
Let us say that  Equation~\eqref{eq:cpaa_lc} is $(r, a)$-satisfied if it is satisfied for the given values of $(r,a)$ and for any $(\xaa, M, \sigma)$ and $t$ as in the hypotheses. Local commutativity for the right algebra-module $\R(k)$ implies that if \eqref{eq:cpaa_lc} is $(r, a)$-satisfied and $(r', a')$-satisfied, then \eqref{eq:cpaa_lc} is $(r \cdot r', a \cdot a')$-satisfied. Since $(r, a) = (r \cdot J_p, 1_{m'} \cdot a)$ (where $J_p$ denotes the sum of the idempotents $J_{S, p}$ over all possible $S$), it suffices to check that  \eqref{eq:cpaa_lc} is $(r, 1_{m'})$-satisfied and $(J_p, a)$-satisfied. Local commutativity with respect to $(J_p, a)$ follows from the structure of $\T(k')$ as a top algebra-module. 

To prove local commutativity with respect to $(r, 1_{m'})$, in light of the decomposition trick above, it suffices to prove it when $r$ is one of the following elements of $\R(k)$, which we refer to as  {\em local generators}: $$J_{S, m}, \ J_m \lambda_{i,j} J_m, \ J_m \sigma_i J_m \text{ or } J_m \xi_{i,j} J_{m-1}.$$ Note that the multiplicative generators of $\R(k)$ can be written as (infinite) sums of these
local generators. The local generators are better-suited for proving a relation that is only expected to hold when we fix the indices $m$ and $p$ in $\R(k)_{m,p}.$ We refer to the four kinds of generators above as an idempotent, a local chord, a local nilCoxeter element, and a local cap.

We can play a similar decomposition trick with respect to the horizontal multiplication, and reduce the problem to the case when $t$ is an idempotent, a chord $\rho_{i,j}$, or a half-chord $\mu_i$.   Note that these three types of elements are already local in the sense that they live in a given $\T(k')_{m'}$.  With $a$ being the identity and both $t$ and $r$ being local generators, the verification of \eqref{eq:cpaa_lc} is governed by a small number of cases. The cases involving an idempotent are clear, as are the cases
where the pair of elements $(r,t)$ are (nilCoxeter, chord), (cap, chord), and
(nilCoxeter, half-chord).  The remaining three cases are illustrated in Figures~\ref{fig:cpaa_lc1} through \ref{fig:cpaa_lc3b}. 
\end {proof}

\begin{figure}[p]
\begin{center}
\input{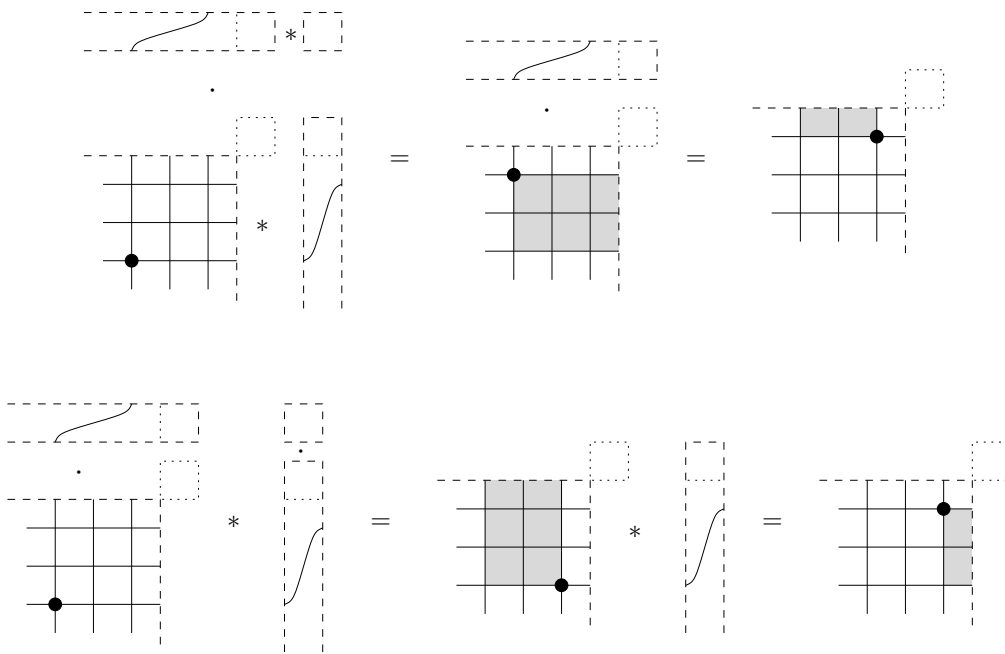}
\end{center}
\caption {{\bf Local commutation in $\cpaa$ for two chords.} The horizontal and vertical actions of local chords commute. }
\label{fig:cpaa_lc1}
\end{figure}

\begin{figure}[p]
\begin{center}
\vspace*{20pt}
\input{cpaa_lc2.pstex_t}
\end{center}
\caption {{\bf Local commutation in $\cpaa$ for a half-chord and a chord, case (1).} The horizontal action of a local half-chord and the vertical action of a local chord commute.}
\label{fig:cpaa_lc2}
\end{figure}

\begin{figure}[ht]
\begin{center}
\input{cpaa_lc2b.pstex_t}
\end{center}
\caption {{\bf Local commutation in $\cpaa$ for a half-chord and a chord, case (2).} The horizontal action of a local half-chord and the vertical action of a local chord commute.}
\label{fig:cpaa_lc2b}
\end{figure}

\begin{figure}[ht]
\begin{center}
\input{cpaa_lc2c.pstex_t}
\end{center}
\caption {{\bf Local commutation in $\cpaa$ for a half-chord and a chord, case (3).} The horizontal action of a local half-chord and the vertical action of a local chord commute.}
\label{fig:cpaa_lc2c}
\end{figure}

\begin{figure}[ht]
\begin{center}
\input{cpaa_lc3.pstex_t}
\end{center}
\caption {{\bf Local commutation in $\cpaa$ for a half-chord and a cap, case (1).} The horizontal action of a local half-chord and the vertical action of a local cap commute.}
\label{fig:cpaa_lc3}
\end{figure}

\begin{figure}[ht]
\begin{center}
\input{cpaa_lc3b.pstex_t}
\end{center}
\caption {{\bf Local commutation in $\cpaa$ for a half-chord and a cap, case (2).} The horizontal action of a local half-chord and the vertical action of a local cap commute.}
\label{fig:cpaa_lc3b}
\end{figure}

Lemmas~\ref{lemma:dcpaa}, \ref{lemma:cpaaT}, \ref{lemma:cpaaR}, \ref{lemma:cpaaA} together show that $\cpaa$ is indeed a top-right differential 2-module over $\T(k')$ and $\R(k).$  The grading on $\cpaa$ will be described in Section~\ref{sec:bigradeA}.

\subsection {The cornered 2-module $\cpad$}
\label {sec:cpad_module}
Our next goal is to define the bottom-right module 
$$\cpad = \{\cpad_p\}_{p \geq 0}$$ over $\R(k)$ and $\B(N-k').$

A partial planar generator $\xad$ in the quadrant $\had$ is defined to be a collection of points $\{p_i \in \alpha_i \cap \beta_{\eps(i)} | i \in S \}$, where $S= S_{\xad} \subseteq \{k'+1, \dots, N\}$ is a subset, and $\eps=\eps_{\xad} : S \to [k]$ is an injection. We denote by $\S(\had)$ the set of such partial planar generators, and by $\kk\langle \S(\had) \rangle$ the $\kk$-module freely generated by them. Let $\J(k)_0$ be the subalgebra of $\ol \R(k)$ generated by the idempotents $J_{S,0}$---see Section~\ref{sec:ram}. We let $\J(k)_0$ act on $\kk\langle \S(\had) \rangle$ by
$$ J_{S,0} \cdot \xad =  \begin{cases}
\xad & \text{if } \im(\eps_{\xad}) = [k] \setminus S,\\
0 & \text{otherwise.}
\end {cases} $$ 

We then let
\begin {equation}
\label {eq:pear}
 \cpad_p = \R(k)_{p,0} \odot_{\J( k)_0} \kk \langle \S(\had) \rangle.
 \end {equation}
Thus, a $\kk$-basis of $\cpad_p$ consists of the elements of the form $(S,  T, S', \emptyset, \phi, \psi, \psi') \cdot \xad,$ where the data in parentheses is a generator of $\R(k)_{p,0}$ as in Section~\ref{sec:ram}, and $\xad \in \S(\had)$ satisfies $\im(\eps_{\xad})= [k] \setminus T.$  Equation~\eqref{eq:pear} implicitly induces a vertical action:
$$ \cdot: \R(k)_{m, p} \otimes \cpad_p \to \cpad_m.$$

We define the differential on $\cpad$ in a manner similar to the definition of the differential on $\cpd$, except using half-strips with one edge on the interface $\ell'$ rather than $\ell$; see Figure~\ref{fig:cpad0}. 
\begin{figure}[ht]
\begin{center}
\input{cpad0.pstex_t}
\end{center}
\caption {{\bf The relation $\del \xad = \lambda_{j,l} \cdot \yad + \dots$ in $\cpad.$} The shaded area is a half-strip.}
\label{fig:cpad0}
\end{figure}
\nid More precisely, suppose $\yad \in \S(\had)$ is obtained from $\xad$ by shifting one component $p_i$ from the column $\beta_l$ to the column $\beta_{j}$ (in the same row $\alpha_i$), such that $j < l.$ We then let $H$ be the half-strip bounded by $\beta_j, \alpha_i, \beta_l$ and $\ell'.$ If $H$ has no components of $\xad$ or $X$ markings inside, we set $\Halfx(\lambda_{j,l}; \xad, \yad) = \{H\},$ and define $U(H)$ as in Section~\ref{sec:cpad}. In all other cases, we set $\Halfx(\lambda_{j,l}; \xad, \yad) = \emptyset.$  The differential of a generator $\xad \in \S(\had)$ is then defined as the sum
\begin {eqnarray*}
\del \xad &=& \sum_{\yad \in \S(\had)} \sum_{R \in \Rectx(\xad, \yad)} U(R)\ \yad \\
& + & \sum_{\yad \in \S(\had)} \sum_{j,l} \sum_{H \in \Halfx(\lambda_{j,l}; \xad, \yad)} U(H) \ \lambda_{j,l} \cdot \yad. 
\end {eqnarray*}

\nid We extend $\del$ to the whole module $\cpad$ using the Leibniz rule with respect to vertical multiplication.

We also define a horizontal action 
$$* : \cpad_p \otimes \B(N-k')_m \to \cpad_{p+m},$$ 
as follows. We first define the action of the (horizontal) generators of $\B(N-k')$ on $\xad \in \S(\had).$  Let us make a minor notational change compared to Section~\ref{sec:stam}: we will add $k'$ to the height of the coordinates in $\B(N-k')$, so that what was $\nu_{j}$ in Section~\ref{sec:stam} becomes $\nu_{k'+j},$ etc. Then, idempotents and chords act just as in $\cpa$:
$$ \xad *  I_S = \begin{cases}
\xad & \text{if } S = S_{\xad} \\
0 & \text{otherwise,}
\end {cases}  $$
and
$$ \xad * \rho_{i, j} = \sum_{\yad \in \S(\had)} \sum_{H \in \Halfx(\xad, \yad; \rho_{i,j})} U(H) \ \yad.$$

The action of the half-chords involves quarter-strips, as follows. In this setting, a quarter-strip is a rectangle such that its top-left corner is a lattice point $(j,i) \in \had$, its right edge is on $\ell$ ending at height $i,$ and its bottom edge is on $\ell'$ starting at coordinate $j.$ If we have such a quarter-strip $Q$ with a corner at $(j,i)$ and a generator $\xad$ with $i \not \in S_{\xad}$ and $j \not \in \im(\eps_{\xad}),$ and if $Q$ contains no $X$ markings nor points of $\xad$ in its interior, then we set $\Quartx(\xi_j; \xad; \nu_i) = \{Q\},$ and let $U(Q)$ be the product of the $U$ variables corresponding to the $O$ markings inside $Q.$ For all other $\xad, i,$ and $j$, we set $\Quartx(\xi_j; \xad; \nu_i) = \emptyset.$ We then define
\begin {equation}
\xad * \nu_i = \sum_{j } \sum_{Q \in \Quartx(\xi_j; \xad ; \nu_{i})} U(Q) \ \xi_{j} \cdot (\xad \cup \{(j,i)\}).
\end {equation}

\nid See Figure~\ref{fig:cpad1} for an example.
\begin{figure}[ht]
\begin{center}
\input{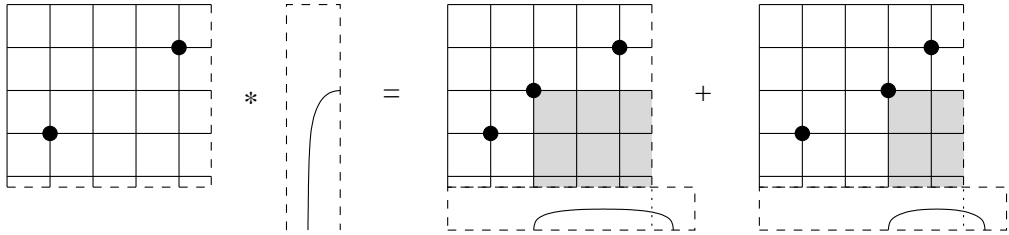}
\end{center}
\caption {{\bf The action of a half-chord on $\cpad.$} Here, the half-chord $\nu_{k'+3}$ acts on an element $\xad.$ The shaded areas are the corresponding quarter-strips.}
\label{fig:cpad1}
\end{figure}

We extend this action of the generators $I_S,
\rho_{i,j}$, and $\nu_i$ on $\kk \langle \S(\had) \rangle$ to all of $\cpad$ by requiring local
commutativity of the $\R(k)$ and $\B(N-k')$ actions.  It remains to check that this
action is compatible with the relations defining the algebra-module $\B(N-k')$, and
that the $\B(N-k')$ action satisfies the Leibniz rule---we leave these verifications as exercises.  All told, the above actions give $\cpad$ the structure of a bottom-right 2-module over $\R(k)$ and $\B(k')$.

\subsection {Bigradings}
\label {sec:bigradeA}
Recall that in Section~\ref{sec:cpad} we equipped $CPA^-(\H^A)$ and $\alg(N)$ with suitable Alexander and Maslov gradings. Our goal here is to give similar gradings to the algebra-modules and 2-modules arising in the slicing of $\H^A$. 

For the top and bottom algebra-modules $\T(k')$ and $\B(N-k')$, we use the same formulas as for $\alg(N),$ that is, Equations \eqref{eq:alex} and \eqref{eq:maslov}:
\begin {eqnarray*}
 A(a) &=& L_X(a) - L_O(a) \\
  \mu(a) &=& \cro(a) - 2L_O(a),
  \end {eqnarray*}
where $a$ is a strand diagram which can have some ends along the central horizontal line $\ell'$. If we let the Alexander grading on the 2-algebra $\nil$ be identically zero, and let the Maslov grading be the crossing count $\cro$, we see that $\T(k')$ and $\B(N-k')$ are differential bigraded modules with respect to the vertical action of $\nil$. Further, the isomorphism
$$ \alg(N) \cong \T(k') \odot_{\nil} \B(N-k')$$
from Theorem~\ref{thm:bnt} preserves the bigradings.

On the right algebra-module $\R(k) = \{\R(k)_{m,p}\}$ we define gradings as follows. Let $L'_O, L'_X \subset \{3/2,\dots,N-1/2\}$ be the $x$-coordinates of the $X$'s and $O$'s under the horizontal line $\ell'$ in the original grid diagram $\H$. (In other words, if $\H = \haa \cup \had \cup \hda \cup \hdd$ as in the introduction, we look at the markings in $\haa \cup \hda$.) Let also $\Oaa$ and $\Xaa$ be the 
sets of $O$'s and $X$'s in the diagram $\haa.$ 

Let $a \in \R(k, s)_{m,p}$ be a basis element, viewed as a strand diagram with $s-(m-p)$ strands on the left, $p$ strands on the right, and $m-p$ caps. We let $L'_X(a)$ be the total intersection number of $a$ with the vertical lines $x=l$ for $l \in L'_X,$ and we define $L'_O(a)$ similarly. Then set:
\begin {eqnarray}
\label {eq:RA}
 A(a) &=& L'_X(a) - L'_O(a) - (m-p)(\# \Xaa - \# \Oaa)\\
 \label {eq:Rmu} \mu(a) &=& \cro(a) - 2L'_O(a) + 2(m-p)(\# \Oaa) - (m-p)(s-(m-p+1)/2).
  \end {eqnarray}
 
To put it differently, compared to what we would naively expect from Equations \eqref{eq:alex} and \eqref{eq:maslov}, here we adjust the quantities $L'_X$ and $L'_O$ by subtracting the number of all markings of that type in $\haa$, once for every cap in $a$. We also adjust the Maslov index by subtracting the quantity $v = s-(m-p+1)/2$, once for every cap in $a$. Because the number $m-p$ of caps is additive under vertical multiplication in $\R(k)$ (and does not change under the horizontal 2-algebra action), and the same is, perhaps unexpectedly, true for the quantity $(m-p)v$, it follows that the differential and the multiplications on $\R(k)$ behave as desired with respect to the two gradings.
   
Let us turn to the module $\cpaa$. Given a basis element $(\xaa, M, \sigma)$, we set
\begin {eqnarray*}
A(\xaa, M, \sigma) &=& \I(\Xaa, \xaa \cup M) - \I(\Oaa, \xaa \cup M) \\
\mu(\xaa, M, \sigma) &=& \I(\xaa, \xaa \cup M) + \cro(\sigma) - 2\I(\Oaa, \xaa \cup M), 
\end {eqnarray*}
where we view $M \subset \{1, \dots, k\}$ as a subset of the plane by fixing the arrow $i\in M$ to be at the point $(i, k'+3/4) \in \rr^2$, just as in the graphical representation (see for example Figure~\ref{fig:cpaa}). Recall that $\cpaa$ is a module over $\kk = \k[U_1, \dots, U_{N-1}]$ and the bigrading on this coefficient ring is $A(U_i) = -1$ and $\mu(U_i)
= -2$.

We can also define bigradings on $\cpad$, as follows. For $\xad \in \S(\had)$, let 
\begin {equation}
\label {eq:xa_d}
\x^{A} = \xad \cup \{(i, k'+3/4)\mid i \in [k] \setminus \im(\eps_{\xad}) \}.
\end {equation}

\nid Then define
\begin {eqnarray*}
A(\xad) &=& \I(\X^A, \xad) - \I(\O^A, \xad) \\
\mu(\xad) &=& \I(\x^A, \xad)  - 2\I(\O^A, \xad), 
\end {eqnarray*}
and extend these gradings to all of $\cpad$ by combining them with the ones on the right algebra-module $\R(k)$. Note that, if $\xad$ is the intersection of a generator $\x^A \in \S(\H^A)$ with $\had$, we could have alternatively defined $\mu(\xad)$ using this $\x^A$ instead of the one defined by \eqref{eq:xa_d}, and obtained the same answer. However, we use \eqref{eq:xa_d} in general because if $\xad$ has fewer than $N-k'$ components, it cannot be completed to an element of $\S(\H^A)$.

With these definitions in place, we have:

\begin {lemma}
\label {lem:big}
The differentials on the 2-modules $\cpaa$ and $\cpad$ preserve the Alexander gradings and decrease the Maslov gradings by one. Also, the actions of the right, top and bottom algebra-modules on $\cpaa$ and $\cpad$ are compatible with the bigradings.
\end {lemma}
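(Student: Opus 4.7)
The plan is to verify each statement separately by checking the four families of operations: the differential on $\cpaa$ (with its three summands $\del_1 + \del_2 + \del_3$), the differential on $\cpad$ (two summands from rectangles and half-strips), the horizontal actions of $\T(k')$ and $\B(N-k')$, and the vertical actions of $\R(k)$. Throughout, the coefficient ring $\kk$ is bigraded by $A(U_i) = -1$ and $\mu(U_i) = -2$, and the arrows in $M$ and the caps in $\R(k)$ are treated as occupying fixed positions in the plane, so that pair-counting intersection numbers extend naturally across the cornered interfaces.

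First I would dispatch the easy components. The summand $\del_1$ on $\cpaa$ only alters $\sigma$, and the $\nil_m$ differential drops $\cro(\sigma)$ by one while not touching $\I(\Xaa,\cdot)$, $\I(\Oaa,\cdot)$, or $\I(\xaa, \cdot)$; hence $A$ is preserved and $\mu$ drops by $1$. The summand $\del_2$ and the rectangle summand of the $\cpad$ differential reduce to the standard empty-rectangle computation of \cite[Lemma 2.5]{MOST}: for an empty rectangle $R$ from $\xaa$ to $\yaa$ one has $A(\yaa) - A(\xaa) = \X(R) - \O(R) = -\O(R)$ and $\mu(\yaa) - \mu(\xaa) = -1 + 2\O(R)$, and combining with $A(U(R)) = -\O(R)$ and $\mu(U(R)) = -2\O(R)$ yields the desired behavior. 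The same calculation governs the idempotent, chord, and half-chord actions of $\T(k')$ and $\B(N-k')$ on $\cpaa$ and $\cpad$; this reproduces the grading arguments of \cite[Proposition 7.2]{LOTplanar}.

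The main novelty is the behavior of the corner-crossing half-strips (the summand $\del_3$ on $\cpaa$ and the half-strip summand on $\cpad$), together with the cap and quarter-strip interactions, and in particular the correction terms in the grading \eqref{eq:RA}--\eqref{eq:Rmu} on $\R(k)$. For $\del_3$, consider a half-strip $H \in \Halfx(\xaa,\yaa;\lambda_{j,l})$ of height $k' + 3/4 - i$ carrying the move $(j,i) \leadsto (l,i)$ with $j \notin M$, $l \in M$. I would observe that $\xaa \cup M$ and $\yaa \cup (M\cup\{j\}\setminus\{l\})$ differ exactly by the corner-swap of the rectangular region $H$ (with the top two corners being arrows at height $k'+3/4$), and then run the standard rectangle Maslov/Alexander computation, tracking only the $X$'s and $O$'s in the half-strip and using the emptiness of $H$ for the $X$ count. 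The resulting change in $\I(\xaa, \xaa \cup M)$ differs from the change in $\I(\yaa, \yaa \cup M')$ by exactly $\#\{u \in M : j < u < l\}$, which is precisely the jump $\cro(\sigma^{M,j,l}) - \cro(\sigma) = l' - j' - 1$. Adding this to the $U(H)$ contribution gives the required $\Delta A = 0$ and $\Delta \mu = -1$. The half-strip summand of the $\cpad$ differential is handled in exactly the same way, now with the auxiliary chord $\lambda_{j,l} \in \R(k)$ providing the crossing-count correction via \eqref{eq:RA}--\eqref{eq:Rmu}.

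Finally, for the $\R(k)$-actions and the quarter-strip contributions to the $\T(k')$- and $\B(N-k')$-actions, I would verify the corrections in \eqref{eq:RA}--\eqref{eq:Rmu} one generator type at a time. Idempotents, horizontal chords $\lambda_{i,j}$, and nilCoxeter elements $\sigma_i$ have $m = p$, so the correction terms vanish and the check reduces to the strands-algebra computation of \cite{LOTplanar}. A cap $\xi_i$ increases $m - p$ by one and is declared to act on $(\xaa, M, \sigma)$ by deleting an arrow, so its grading contribution $-(\#\Xaa - \#\Oaa)$ to $A$ and $+2\#\Oaa - v$ to $\mu$ is designed to absorb $\I(\Xaa, \{i\})$, $\I(\Oaa, \{i\})$, and $\I(\xaa \cup M, \{i\})$ when the arrow at $i \in M$ is removed. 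The hardest step will be checking this compatibility for the quarter-strip action of a half-chord $\mu_i$ on $\cpaa$ (and $\nu_j$ on $\cpad$); the quarter-strip $Q$ effectively combines a cap action with the placement of a new arrow at column $j$, so I expect the $U(Q)$ factor, the deletion of $p_i$ from $\xaa$, the insertion of $j$ into $M$, and the grading corrections in the cap part of $\R(k)$ to combine to give the needed $A$-preservation and $\mu$-invariance, reducing to a careful enumeration of $X$'s and $O$'s in $Q$ together with $v = s - (m-p+1)/2$. Once this quarter-strip calculation is in hand, local commutativity (Lemma~\ref{lemma:cpaaA}) propagates the bigrading compatibility to all of $\cpaa$ and $\cpad$.
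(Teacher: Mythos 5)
Your proof is correct and follows essentially the same strategy as the paper's: reduce the ``old'' contributions (rectangles, $\ell$-half-strips, idempotents, chords, nilCoxeter differential) to \cite[Propositions 6.3, 7.2]{LOTplanar} and then verify by hand the genuinely new interactions at the corner --- namely the arrow/cap bookkeeping, the $\del_3$ half-strips that move arrows, and the quarter-strips in the half-chord actions. Where the paper's argument makes the cap verification explicit (via the identity $\I(\Xaa,\{(i,k'+3/4)\}) = \#\Xaa - L'_X(\xi_i)$) and leaves $\del_3$ to the reader, you invert the emphasis, giving a careful account of how the arrows on the top edge of the $\del_3$-half-strip contribute to $\I(\xaa \cup M,\xaa\cup M)$ and how this exactly offsets the jump $\cro(\sigma^{M,j,l})-\cro(\sigma)=l'-j'-1$; that calculation is right and is a useful complement to the paper's. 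One small imprecision in your last paragraph: the action of the half-chord $\mu_i$ on $\cpaa$ is a $\T(k')$-action and does not invoke the cap correction terms in \eqref{eq:RA}--\eqref{eq:Rmu}; those corrections become relevant only for the $\nu_j$-action on $\cpad$, where the output is $\xi_j\cdot(\xad\cup\{(j,i)\})$ and the $\R(k)$-grading of $\xi_j$ enters. The $\mu_i$-on-$\cpaa$ check is a direct enumeration in which $L_X(\mu_i)$ and $L_O(\mu_i)$ are reproduced by the $X$'s and $O$'s to the left of the quarter-strip together with the $U(Q)$ factor, plus a match of $\cro(\sigma\cup_M j)-\cro(\sigma)$ against the $\I$-change from moving the point $(j,i)$ up to $(j,k'+3/4)$.
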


\begin {proof}
This is similar to the proofs of Propositions 6.3 and 7.2 in \cite{LOTplanar} (about the corresponding statements for $CPA^-$ and $CPD^-$). In the case of $\cpaa$, we need to view the elements of $M$ as components of $\xaa$ moved to the line $y=k'+3/4$; this is why we use $\xaa \cup M$ in the definition of the bigrading. 

Compared to \cite{LOTplanar}, the new ingredient here is the action of the caps $\xi_{i} \in \R(k)$. On $\cpaa$, a cap acts by deleting an element of $M$ and a corresponding strand in $\sigma$. Suppose we have
\begin {equation}
\label {eq:aar4again}
(\xaa, M, \sigma) \cdot \xi_{i} = (\xaa, M \setminus \{ i \}, \sigma \setminus_M i), 
\end {equation}
as in Equation~\eqref{eq:aar4}. The relations
\begin {eqnarray*}
\I(\Xaa, \xaa \cup M) &=& \I(\Xaa, \xaa \cup (M \setminus \{i\})) +  \I(\Xaa,  \{(i, k'+3/4)\})  \\
&=&   \I(\Xaa, \xaa \cup (M \setminus \{i\})) + \# \Xaa - L_X'(\xi_{i})
\end {eqnarray*}
and 
\begin {eqnarray*}
\I(\Oaa, \xaa \cup M) &=& \I(\Oaa, \xaa \cup (M \setminus \{i\})) +  \I(\Oaa,  \{(i, k'+3/4)\})  \\
&=&   \I(\Oaa, \xaa \cup (M \setminus \{i\})) + \# \Oaa - L_O'(\xi_{i})
\end {eqnarray*}
imply that the two sides of \eqref{eq:aar4again} have the same Alexander grading. The proof that the Maslov grading is preserved is similar.  That the $\R(k)$ action on $\cpad$ is grading preserving is immediate.  That the actions of the top and bottom algebra-modules are also grading preserving is a straightforward, if tedious, computation.
\end {proof}

\subsection {The pairing theorem for $CPA^-$}
\begin {theorem}
\label {thm:cpa}
Let $\H^A = \haa \cup_{\ell'} \had$ be a partial planar grid diagram (of type $A$) of height $N$ and width $k$, horizontally sliced into $\haa$ and $\had,$ with the height of $\haa$ being $k'.$ There is an isomorphism of differential bigraded modules over $\alg(N):$ 
\begin {equation}
\label {eq:cpaha}
CPA^-(\H^A) \cong \bigl(\cpaah\bigr) \odot_{\R(k)} \big(\cpadh \bigr).
\end {equation}
\end {theorem}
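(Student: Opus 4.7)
My plan is to construct an explicit $\kk$-linear map
$$\phi \colon CPA^-(\H^A) \longrightarrow \bigl(\cpaah\bigr) \odot_{\R(k)} \bigl(\cpadh\bigr)$$
on generators by $\phi(\x^A) = (\xaa, \emptyset, 1) \odot \xad$, where $\x^A = \xaa \cup \xad$ is the unique horizontal decomposition of $\x^A$ across the line $\ell'$. Here $(\xaa, \emptyset, 1)$ lives in $\cpaa_0$ (empty arrow set, trivial nilCoxeter element), and $\xad$ is interpreted as the generator $J_{[k] \setminus \im(\eps_{\xad}), 0} \cdot \xad \in \cpad_0$. Since $\iota_{\x^A}$ is an injection $[k] \to [N]$, we always have $\im(\eps_{\xaa}) \sqcup \im(\eps_{\xad}) = [k]$, so the idempotent matching condition for the tensor at level $(0,0)$ is automatic, making $\phi$ well-defined.

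The first main step is to prove $\phi$ is a bijection of $\kk$-modules. Level-$0$ elements $\yaa \odot \xad$ with $\im(\eps_{\yaa}) \sqcup \im(\eps_{\xad}) = [k]$ are in evident bijection with $\S(\H^A)$, so the crux is to show that the tensor product is spanned by its level-$0$ part. The reduction mechanism is the cap action in $\R(k)$: given a generator $(\xaa, M, \sigma) \odot z$ with $z \in \cpad_m$ (so $m > 0$), I write $z = r \cdot \xad$ for $r \in \R(k)_{m,0}$ using the definition $\cpad_m = \R(k)_{m,0} \odot_{\J(k)_0} \kk\langle\S(\had)\rangle$, and then slide $r$ across the tensor:
$$ (\xaa, M, \sigma) \odot (r \cdot \xad) = \bigl((\xaa, M, \sigma) \cdot r\bigr) \odot \xad. $$
The element $(\xaa, M, \sigma) \cdot r \in \cpaa_0$ is then a $\kk$-linear combination of plain $\yaa$'s (with $M$ and $\sigma$ consumed by the caps, chords, and nilCoxeter strands in $r$), as required. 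Linear independence at level $0$ follows because $\cpaa_0$ and $\cpad_0$ are free over their respective idempotent rings on the generators $\xaa$ and $\xad$.

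The second main step is to verify that $\phi$ intertwines the differentials. Rectangles in $\Rectx(\x^A, \y^A)$ split into three cases: fully inside $\haa$ (captured by the $\del_2$-term of $\cpaa$), fully inside $\had$ (captured by the first term of the differential on $\cpad$), and those crossing $\ell'$. For a crossing rectangle, the moving point of $\xaa$ and the moving point of $\xad$ are in different half-planes; the rectangle decomposes into a half-strip in $\haa$ with top edge on $\ell'$ and a half-strip in $\had$ with bottom edge on $\ell'$, both labeled by the same chord $\lambda_{j,l}$. On the tensor side, this contribution arises from the $\lambda_{j,l} \cdot \yad$ summand in $\del \xad$ (coming from $\cpad$'s differential), which after sliding across the tensor becomes $\bigl((\xaa, \emptyset, 1) \cdot \lambda_{j,l}\bigr) \odot \yad$; by Equation~\eqref{eq:aar2}, $(\xaa, \emptyset, 1) \cdot \lambda_{j,l}$ counts precisely the required half-strip in $\haa$. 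The $\del_1$- and $\del_3$-terms of $\cpaa$ contribute nothing at level $0$ since $\sigma = 1$ and $M = \emptyset$, which matches the absence of further rectangle types in $\H^A$.

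Finally, I verify that $\phi$ intertwines the action of $\alg(N) \cong \T(k') \odot_{\nil} \B(N-k')$. The $\T(k')$ factor acts on $\cpaa$ and the $\B(N-k')$ factor acts on $\cpad$. The core case is a chord $\rho_{i,j} \in \alg(N)$ with $i \leq k' < j$, which under the decomposition of Theorem~\ref{thm:bnt} corresponds to $\mu_i * \nu_j$. On $CPA^-(\H^A)$, the action of $\rho_{i,j}$ counts a half-strip on $\ell$ crossing $\ell'$; this splits into a quarter-strip in $\haa$ (the action of $\mu_i$ on $\cpaa$, producing an arrow in $M$ via Equation~\eqref{eq:aat3}) and a quarter-strip in $\had$ (the action of $\nu_j$ on $\cpad$, which creates a cap-decorated generator in $\cpad$), with the arrow and cap pairing off via the $\R(k)$ action in the tensor product. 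Chords entirely within one half act through the strands algebra action on the respective half, and idempotents match by construction. The bigrading compatibility is then immediate from Lemma~\ref{lem:big} together with the bigrading compatibility of the algebra decomposition from Section~\ref{sec:bigradeA}. The main obstacle throughout is the level-reduction argument in Step~2 (showing nothing interesting happens above level $0$), because one must carefully account for the non-trivial action of caps, nilCoxeter generators, and chords in $\R(k)$ simultaneously; everything else is a combinatorial bookkeeping exercise analogous to the proof of Theorem~\ref{thm:lot2}.
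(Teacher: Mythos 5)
Your proposal is correct and takes essentially the same approach as the paper's proof, with the only cosmetic difference being the direction in which you define the isomorphism (from $CPA^-(\H^A)$ into the tensor product, rather than the paper's implicit map the other way). The key technical content—reducing tensor-product generators to level $0$ by sliding $\R(k)$-elements across $\odot$, splitting rectangles in $\H^A$ into quadrant-local pieces and crossing pieces matched by half-strips in the $\lambda_{j,l}$-differential, and handling the core case of a chord $\rho_{i,j}$ crossing $\ell'$ via $\mu_i \odot \nu_j$ (a minor slip: you wrote $\mu_i * \nu_j$, but the $\alg(N)$ decomposition uses $\odot$) together with paired arrow/cap in the $\R(k)$-tensor—is identical to the paper's argument.
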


\begin {proof}
A typical generator of the right-hand side of \eqref{eq:cpaha} is of the form
$$ (\xaa, M, \sigma) \odot (\phi \cdot \xad) = ((\xaa, M, \sigma) \cdot \phi) \odot \xad,$$
for some $\xaa \in S(\haa), \xad \in S(\had), \phi \in \R(k)_{p,0}.$
The generator vanishes unless $M$ has exactly $p$ elements. If $M$ has $p$ elements, applying $\phi$ to $(\xaa, M, \sigma)$ yields either zero or a new generator $(\yaa, \emptyset, e_0)$, which we simply call $\yaa$ (by a slight abuse of notation). In light of the actions of the idempotents, for the tensor product $\yaa \odot \xad$ not to vanish we need to have a disjoint union decomposition
$$ [k] = \im(\eps_{\yaa}) \coprod \im(\eps_{\xad}).$$

\nid If this holds, then we can identify $\yaa \odot \xad$ with the generator $\yaa \cup \xad $ of $CPA^-(\H^A).$ This identification preserves the Alexander and Maslov gradings.
This produces the isomorphism \eqref{eq:cpaha}, at least at the level of (bigraded) chain complexes. The proof that the differentials coincide is similar to the analogous result for the decomposition of $CP^-(\H)$ given in Theorem~\ref{thm:lot2}, but using the interface $\ell'$ rather than $\ell.$

We also need to check that the actions of $\alg(N) = \T(k') \odot_{\nil} \B(N-k')$ on the two sides of \eqref{eq:cpaha} coincide. This is clear for the actions of idempotents, and for the actions of chords which lie entirely either below $\ell'$ or above $\ell'.$ 

The interesting case is that of a chord $\rho_{i,j}$ which crosses the interface $\ell'.$ Suppose in $CPA^-$ we have a half-strip $$H \in \Halfx(\xaa \cup \xad, \yaa \cup \yad; \rho_{i,j}),$$ 
where $\yaa = \xaa \setminus \{(l,i)\}$ and $\yad = \xad \cup \{(l,j)\}$. Let $H^{AA}$ denote the quarter strip $H \cap \haa$, and $H^{AD}$ the quarter strip $H \cap \had$.

 We then have
\begin {equation}
\label {eq:recoverA}
(\xaa \cup \xad) * \rho_{i,j} =   (\xaa \odot \xad) * (\mu_i \odot \nu_j)\phantom{*******************}
\end {equation}
\begin {eqnarray*}
 &=& (\xaa * \mu_i) \odot (\xad * \nu_j) \\
&=& U(H^{AA})\ (\yaa, \{l\}, \id)  \odot \sum_p \sum_{Q \in \Quartx(\xi_p ; \xad ; \nu_j)}  U(Q) \ 
\xi_p \cdot (\xad \cup \{(p,j)\}) \\
&=& U(H^{AA}) U(H^{AD})\ (\yaa, \{l\}, \id) \odot (\xi_l \cdot \yad) \\
&=& U(H)\ ((\yaa,\{l\}, \id) \cdot \xi_l) \odot \yad \\
&=&  U(H) \ (\yaa \cup \yad),
\end {eqnarray*}
recovering the algebra action on $CPA^-.$ See Figure~\ref{fig:pairA}.\end {proof}

\begin{figure}[ht]
\begin{center}
\input{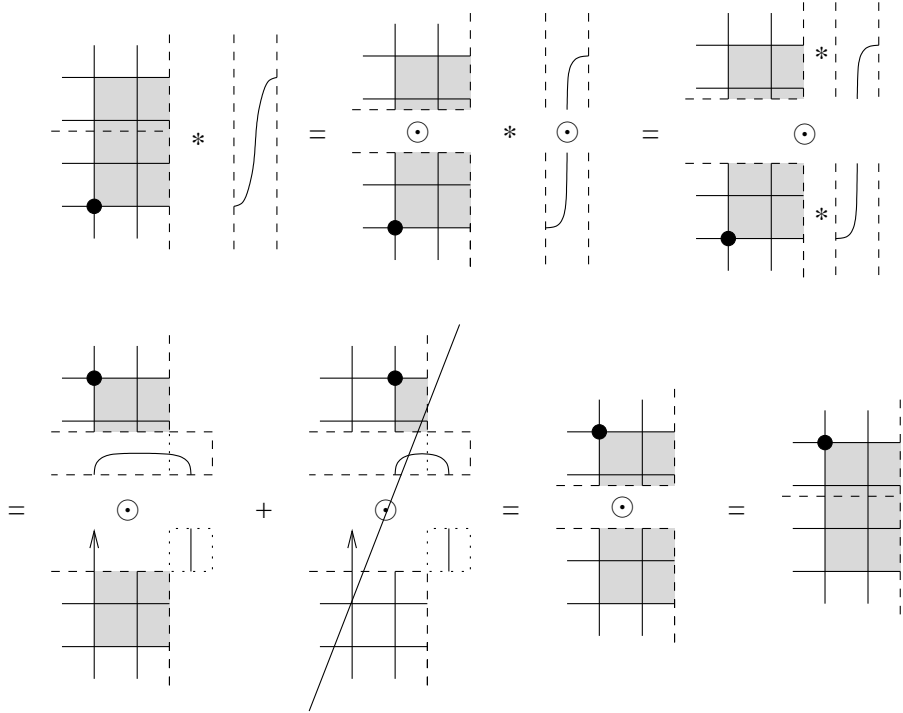}
\end{center}
\caption {{\bf Equation~\eqref{eq:recoverA}.} The action of $\alg(N)$ on $CPA^-$ can be recovered from the tensor product of $\cpaa$ and $\cpad.$ The crossed-out term is zero, because the arrow does not match the left end of the cap.}
\label{fig:pairA}
\end{figure}

\section {Reconstructing $CPD^-$ from cornered 2-modules} \label{sec:cpd}

We now turn attention to $\H^D$, a partial grid diagram of height $N$ and width $N-k$, bounded on the left by a vertical line $\ell,$ as in Section~\ref{sec:cpad}. We consider a horizontal slicing of $\H^D$ by the horizontal line $\ell'$ at height $k'+3/4$, just as we did for $\H^A$ in Section~\ref{sec:cpa}. We denote the quadrant below $\ell'$ (and right of $\ell$) by $\hda,$ and the quadrant above $\ell'$ by $\hdd.$ See the right-hand side of Figure~\ref{fig:dada}.

Recall that to $\H^D$ we can associate a differential graded module $CPD^- = CPD^-(\H^D)$ over the algebra $\alg(N)\cong \T(k') \odot_{\nil} \B(N-k').$ The goal of this section is to present a tensor product decomposition of the form
$$ CPD^- = \cpda \odot_{\L(N-k)} \cpdd,$$
where $\L(N-k)$ is a left algebra-module over $\nil,$ $\cpda = \cpdah$ is a top-left 2-module over $\L(N-k)$ and $\T(k')$, and $\cpdd  = \cpddh$ is a bottom-left 2-module over $\L(N-k)$ and $\B(N-k')$.

\subsection {The left algebra-module}
\label {sec:lam}

We define the left algebra-module 
$$\L(k) = \{ \L(k)_{m,p} | m, p \geq 0\}$$
by analogy with the right algebra-module $\R(k)$ from Section~\ref{sec:ram}, except now we will allow {\em cups} instead of caps.  

More precisely, each piece $\L(k)_{m,p}$ is nonzero only for $0 \leq p-m \leq k,$ in which case it has an additional decomposition
$$ \L(k)_{m,p} = \bigoplus_{t=p-m}^k \L(k, t)_{m,p}.$$

\nid As a $\kk$-module, the term $\L(k,t)_{m,p}$ is freely generated by data of the form $(S, T, T', M, \phi, \psi, \psi'),$ where:
\begin {itemize}
\item $S, T \subseteq [k] = \{1, \dots, k\}$ are subsets with $t-(p-m)$ elements each;
\item $T'$ is a subset of $[k]$ with $p-m$ elements such that $T \cap T' = \emptyset$; 
\item $\phi: S \to T$ is a bijection satisfying $\phi(i) \geq i$ for all $i\in S;$ 
\item $M$ is a subset of $[p]$ of cardinality $m;$ 
\item $\psi: [m] \to M$ and $\psi' : ([p] \setminus M) \to T'$ are bijections.
\end {itemize}

Graphically, we represent such a generator by a rectangular diagram split into two halves by a vertical dotted line. To the right of this line we have $k$ bullets at the bottom and $k$ at the top, and $t-(p-m)$ rightward-veering strands joining some of the bottom bullets (corresponding to the subset $S$) to some of the top ones (corresponding to the subset $T$); the strands link bullets according to the bijection $\phi$. To the left of the dotted line we have $m$ bullets at the bottom and $p$ at the top, with $m$ of the top ones (corresponding to $M$) joined with the bottom ones according to $\psi.$ Finally, $p-m$ of the top bullets to the left are joined with the remaining top bullets to the right (corresponding to $T'$), according to $\psi'$. For example,
$$\includegraphics[scale=0.8]{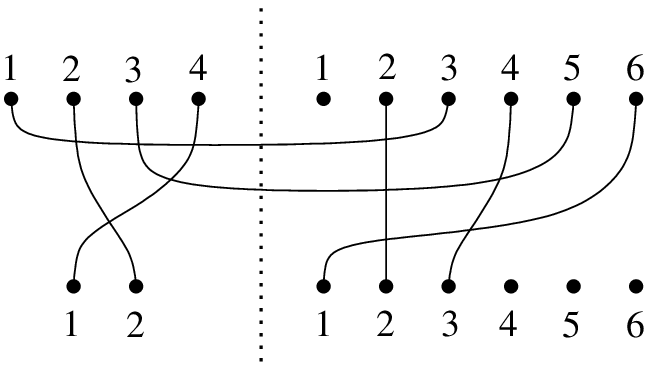}$$
represents a generator $(S, T, T', M, \phi, \psi, \psi')$, with $k=6, t=5, m=2, p=4, S=\{1,2,3\}, T=\{2,4,6\}, T' =\{3,5\},$ and $M = \{2,4\}.$

We define a differential on $\L(k)_{m,p}$ as the sum over all ways of smoothing a crossing. We also have two multiplications:
$$ \cdot: \L(k)_{m,m'} \otimes \L(k)_{m', p} \to \L(k)_{m,p} $$
and
$$  * : \nil_n \otimes \L(k)_{m,p} \to \L(k)_{m+n, p+n}, $$
by vertical and horizontal concatenation of diagrams, respectively, with double crossings being set to zero. Note that $\L(k)$ is multiplicatively finite, because $\L(k)_{m,p}=0$ for $m > p$.

With respect to the vertical multiplication of $\ol \L(k) = \prod_{m,p,t} \L(k,t)_{m,p}$, we have a set of  multiplicative generators consisting of: 
\begin {itemize}
\item idempotents $J_{S,m}=(S, S,\emptyset, [m], \id_S, \id_{[m]}, \emptyset)$ for $S \subseteq [k]$;
\item chords $\lambda_{i,j}$ from the $i\th$ bottom to the $j\th$ top bullet on the right-hand side; each $\lambda_{i,j}$ is an infinite sum of all septuples differing from a $J_{S, m}$ only in positions $i$ and $j$ on the right;
\item nilCoxeter elements $\sigma_i$; each $\sigma_i$ is an infinite sum of all elements $\sigma_{S, m, i}$ for $m > i$, where $\sigma_{S, m, i}$ is obtained from the idempotent $J_{S, m}$ by introducing a twist between the $i\th$ and the $(i+1)\th$ strands on the left-hand side of the dotted line;
\item cups of the form $\zeta_{i}$, drawn as strands joining the rightmost bullet on the top left to the $i\th$ bullet on the top right; that is, each cup $\zeta_{i}$ is the infinite sum of all $(S, S,\{i\}, [m], \id_S, \psi, \psi')$, where $\psi'$ is the unique bijection taking the $(m+1)\st$ bullet on the top left to $\{i\}$, and $\psi$ is the unique order-preserving bijection from $[m]$ to $[m] \subset [m+1]$.
\end {itemize}

\nid These generators satisfy relations similar to those for $\R(k)$ described in Section~\ref{sec:ram}.

From now on we will focus on the left algebra module $\L(N-k)$ corresponding to the right-hand side of the interface $\ell'$ in the planar grid diagram. We will therefore make a minor notational change (just as we did for $\B(N-k')$), in that we will add $k$ to each element of $S, T, T'$ when discussing generators of $\L(N-k).$ For example, instead of $\lambda_{3,4}$ we will write $\lambda_{k+3, k+4}$.

\subsection {The cornered 2-module $\cpda$}

Our next goal is to construct from the quadrant $\hda$ a top-left 2-module $\cpda = \cpdah$ over $\L(N-k)$ and $\T(k')$. The construction is reminiscent of that of $\cpad$ from Section~\ref{sec:cpad_module}. 

A partial planar generator $\xda$ in the quadrant $\hda$ is defined to be a collection of points $\{p_i \in \alpha_i \cap \beta_{\eps(i)} | i \in S \}$, where $S= S_{\xda} \subseteq \{1, \dots, k'\}$ is a subset, and $\eps=\eps_{\xda} : S \to \{k+1, \dots, N\}$ is an injection. We denote by $\S(\hda)$ the set of such partial planar generators, and by $\kk\langle \S(\hda) \rangle$ the $\kk$-module freely generated by them. Let $\I(k')$ be the subalgebra of $\ol \T(k')$ generated by the idempotents $I_{T}$. We let $\I(k')$ act on $\kk\langle \S(\hda) \rangle$ by
$$ I_{T} \cdot \xda =  \begin{cases}
\xda & \text{if } T = [k'] \setminus S_{\xda},\\
0 & \text{otherwise}.
\end {cases} $$ 

\nid We then set
\begin {equation}
\label {eq:apple}
 \cpda_m = \T(k')_m \oast_{\I(k')} \kk\langle \S(\hda) \rangle.
 \end {equation}

This equation makes transparent the horizontal multiplication action of $\T(k')$ on $\cpda.$ The differential of an element $\xda \in \cpda$ is defined by a count of empty rectangles and half-strips, exactly as in $CPD^-$:
\begin {eqnarray*}
\del \xda &=& \sum_{\yda \in \S(\hda)} \sum_{R \in \Rectx(\xda, \yda)} U(R)\ \yda \\
& + & \sum_{\yda \in \S(\hda)} \sum_{i,j} \sum_{H \in \Halfx(\rho_{i,j};\xda, \yda)} U(H) \ \rho_{i,j} * \yda. 
\end {eqnarray*}

\nid The differential is extended to all of $\cpda$ by imposing the Leibniz rule with respect to horizontal multiplication. 

Lastly, we define a vertical multiplication action of $\L(N-k)$ on $\cpda$, that is a collection of maps $\cpda_m \otimes \L(N-k)_{m,p} \ra \cpda_p$.  Let $\phi$ denote an element of $\T(k')_m$.  Idempotents, chords, and nilCoxeter elements act as expected:
\begin{align*}
(\phi * \xda) \cdot J_{S, m} &= \begin{cases} \phi * \xda & \text{if } S= \im(\eps_{\xda}) \\
0 & \text{otherwise} \end{cases}
\\ 
(\phi * \xda) \cdot \lambda_{i,j}  &= \sum_{\yda \in \S(\hda)} \sum_{H \in \Halfx(\xda, \yda; \lambda_{i,j})} U(H) \ \phi * \yda.
\\
(\phi * \xda) \cdot \sigma_i &= \begin{cases} (\phi \cdot \sigma_i) * \xda & \text{if } i < m \\ 0 & \text{otherwise} \end{cases}
\end{align*}

The action of cups involves counts of quarter-strips. More precisely, we have
$$ (\phi * \xda) \cdot \zeta_{j} = \sum_i \sum_{Q \in \Quartx(\mu_i; \xda; \zeta_j)} U(Q) \ (\phi * \mu_i) * (\xad \cup \{(j,i)\}),$$
where $\Quartx(\mu_i; \xda; \zeta_j)$ denotes the set (with at most one element) of empty quarter-strips with the left edge on $\ell$, the top edge on $\ell'$, and the bottom right corner at a point $(j,i)$ such that $i \not \in S_{\xda}$ and $j \not \in \im(\eps_{\xda}).$ An example is shown in Figure~\ref{fig:cpda1}.

\begin{figure}[ht]
\begin{center}
\input{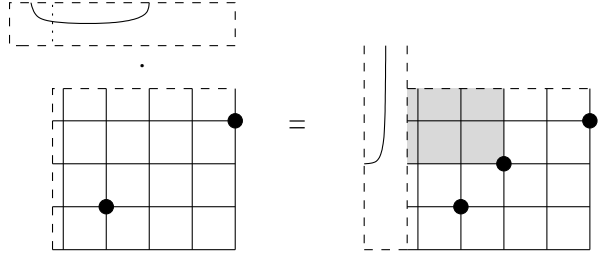}
\end{center}
\caption {{\bf The action of a cup on $\cpda.$} The cup is $\zeta_{3}.$ The shaded area is the respective quarter-strip.}
\label{fig:cpda1}
\end{figure}

\subsection {The cornered 2-module $\cpdd$}
Next, we construct from the quadrant $\hdd$ a bottom-left 2-module $\cpdd = \cpddh$ over $\L(N-k)$ and $\B(N-k')$. 

We define a partial planar generator $\xdd$ in the quadrant $\hdd$ to be a collection of points $\{p_i \in \alpha_i \cap \beta_{\eps(i)} | i \in S \}$, where $S= S_{\xdd} \subseteq \{k'+1, \dots, N\}$ is a subset, and $\eps=\eps_{\xdd} : S \to \{k+1, \dots, N\}$ is an injection. We denote by $\S(\hdd)$ the set of such partial planar generators, and by $\kk\langle \S(\hdd) \rangle$ the $\kk$-module freely generated by them. We let the idempotent subalgebra $\I(N-k') \subset \B(N-k')$ act on $\kk\langle \S(\hdd) \rangle$ by
$$ I_{T} \cdot \xdd =  \begin{cases}
\xdd & \text{if } T = \{k'+1, \dots, N\}  \setminus S_{\xdd},\\
0 & \text{otherwise}.
\end {cases} $$ 

For any fixed $p \geq 0$, we define a subalgebra $\J(N-k)_p \subset \L(N-k)$ to be generated (via vertical multiplication) by the  idempotents $J_{S, p}$ and the elements $\sigma_i \cdot J_{S, p}$ for all possible $S$ and $i < p$. Thus, a $\kk$-basis of $\J(N-k)_p$ is given by elements of the form $\sigma \cdot J_{S,p}$, for $S \subseteq \{k+1, \dots, N \}$ and $\sigma \in \nil_p.$  

We let the subalgebra $\J(N-k)_p$ act on $\B(N-k')_p \oast_{\I(N-k')} \kk\langle \S(\hdd)\rangle$ by
$$ (\sigma' \cdot J_{S',p}) \cdot \bigl((\sigma \cdot (S, T, \phi)) * \xdd \bigr ) =  \begin{cases}
(\sigma' \cdot \sigma \cdot (S, T, \phi)) * \xdd & \text{if } S' = \{k+1, \dots, N\}  \setminus \im(\eps_{\xdd}),\\
0 & \text{otherwise}
\end {cases} $$ 

\nid We then set
\begin {equation}
\label {eq:quince}
 \cpdd_p = \bigoplus_{p' = p}^{N-k'} \L(N-k)_{p, p'} \odot_{\J(N-k)_{p'}} \bigl( \B(N-k')_{p'} \oast_{\I(N-k')} \kk\langle \S(\hdd)\rangle \bigr)
 \end {equation}

Thus, a $\kk$-basis for $\cpdd$ consists of elements of the form $$(S, T, T', M, \phi, \psi, \psi') \cdot \bigl(  (S'', T'', \phi'') * \xdd \bigr),$$ with $(S, T, T', M, \phi, \psi, \psi') \in \L(N-k)_{p',p}$ and $(S'', T'', \phi'') \in \B(N-k')_{p'}$, such that $T \cup T' =  \{k+1, \dots, N\}  \setminus \im(\eps_{\xdd})$ and $T'' = \{k'+1, \dots, N\}  \setminus S_{\xdd}.$  Diagrammatically, a basis element of $\cpdd$ is represented by a collection of points in $\hdd$, together with chord  pictures attached to the left and bottom of $\hdd$, as in Figure~\ref{fig:cpdd}. 

\begin{figure}[ht]
\begin{center}
\input{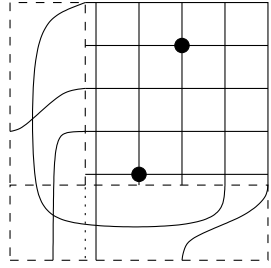}
\end{center}
\caption {{\bf An element of $\cpdd.$} }
\label{fig:cpdd}
\end{figure}

By construction, $\cpdd$ comes equipped with bottom and left actions by  $\L(N-k)$ and $\B(N-k'),$ respectively, in such a way that local commutation is respected. It only remains to define the differential on $\cpdd$. The differential of some $\xdd \in \S(\hdd)$ involves a count of empty rectangles, empty half-strips (of two kinds: with the left edge on $\ell,$ or with the bottom edge on $\ell'$), as well as empty  quarter-strips as in Figure~\ref{fig:cpdd1}. More precisely,
\begin {eqnarray*} 
\del \xdd &=& \sum_{\ydd \in \S(\hdd)} \sum_{R \in \Rectx(\xdd, \ydd)} U(R)\ \ydd \\
& + & \sum_{\ydd \in \S(\hdd)} \sum_{i,j} \sum_{H \in \Halfx(\rho_{i,j};\xdd, \ydd)} U(H) \ \rho_{i,j} * \ydd\\
&+&  \sum_{\ydd \in \S(\hdd)} \sum_{i,j} \sum_{H \in \Halfx(\lambda_{i,j};\xdd, \ydd)} U(H) \ \lambda_{i,j} \cdot \ydd \\
& + &  \sum_{(i,j) \in \xdd} \sum_{Q \in \Quartx(\zeta_i, \nu_j;\xdd)} U(Q) \ \zeta_{i} \cdot (\nu_j * (\xdd \setminus \{(i,j)\} )).
\end {eqnarray*}

Here, $\Quartx(\zeta_i, \nu_j;\xdd)$ denotes the set of empty quarter-strips with the top right vertex at $(i, j) \in \xdd,$ the left edge on $\ell$, and the bottom edge on $\ell'.$ An example of the action of $\del$ is given in Figure~\ref{fig:cpdd1}. 

Once the differential is defined on generators $\xdd,$ we can uniquely extend it to the whole 2-module $\cpdd$ by requiring the Leibniz rules to be satisfied.

\begin{figure}[ht]
\begin{center}
\input{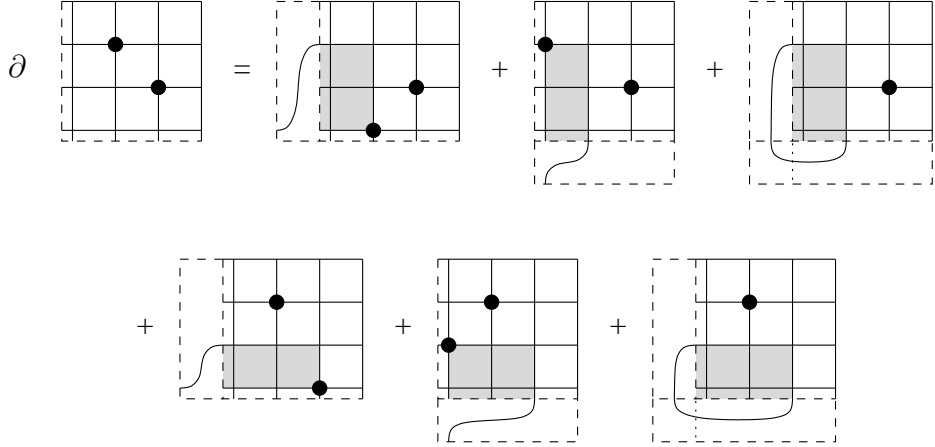}
\end{center}
\caption {{\bf The differential on $\cpdd.$} }
\label{fig:cpdd1}
\end{figure}

\subsection {Bigradings}
\label {sec:bigradeD}

We can equip the 2-modules $\cpda$ and $\cpdd$ with Alexander and Maslov gradings, just as we did for $\cpaa$ and $\cpad$ in Section~\ref{sec:bigradeA}. 

We look at the complete planar grid diagram $\H = \haa \cup \had \cup \hda \cup \hdd$, and use the notations $\Os, \Xs, \Oaa, \Xaa$ as in Section~\ref{sec:bigradeA}.  We first define a bigrading on the left algebra module $\L(N-k)$ by
\begin {eqnarray*}
 A(a) &=& L'_X(a) - L'_O(a) + (p-m)(\# \Xaa - \# \Oaa)\\
  \mu(a) &=& \cro(a) - 2L'_O(a) - 2(p-m)(\# \Oaa)  - (p-m)(k'-t+(p-m-1)/2).
  \end {eqnarray*}
for $a \in \L(N-k, t)_{m,p}$. 
These are analogous to equations \eqref{eq:RA} and \eqref{eq:Rmu}, but now $p-m$ is positive, and represents the number of cups in $a$.  Note that the grading on $\L(N-k)$ depends on $k'$, the height of the horizontal slicing of the planar grid.

To define the gradings on $\cpda$, we combine the ones on $\T(k')$ with those defined on generators $\xda \in \S(\hda)$ by
\begin {eqnarray*}
A(\xda) &=& \I(\Xs, \xda) - \I(\Os, \xda) \\
\mu(\xda) &=& \I(\ax, \xda)  - 2\I(\Os, \xda), 
\end {eqnarray*}
where $\ax = \xda \cup \{(k+3/4, i) \mid i \in [k']\setminus S_{\xda}\}$.  When $\xda$ is the intersection of a generator $\x \in \S(\H)$ with $\hda$, the Maslov grading of $\xda$ can be obtained using this $\x$ in place of $\ax$ in the above formula.

To define the gradings on $\cpdd$, we combine the ones on $\B(N-k')$ and $\L(N-k)$ with those defined on generators $\xdd \in \S(\hdd)$ by
\begin {eqnarray*}
A(\xdd) &=& \I(\Xs, \xdd) - \I(\Os, \xdd) \\
\mu(\xdd) &=& \I(\x^\urcorner, \xdd) + (\#\xdd + k + k' - N)(\#\xdd)  - 2\I(\Os, \xdd), 
\end {eqnarray*}
where 
$$\x^\urcorner = \xdd \cup \bigl\{(k+3/4, i) \mid i \in \{k'+1, \dots, N\} \setminus S_{\xdd} \bigr\}  \cup \bigl\{(i, k'+3/4)\mid i \in \{k+1, \dots, N\} \setminus \im(\eps_{\xdd}) \bigr \}.$$  When $\xdd$ is the intersection of a generator $\x \in \S(\H)$ with $\hdd$, the sum $\I(\x^\urcorner, \xdd) + (\#\xdd + k + k' - N)(\#\xdd)$ is equal to $\I(\x,\xdd)$---the first term accounts for $\I(\xad \cup \xda \cup \xdd, \xdd)$ and the second term accounts for $\I(\xaa,\xdd)$.  Note, though, that $\xdd$ need not admit any extension to a generator $\x$ and so in general, we must use the given formula for the Maslov grading.

The proof of the following is analogous to that of Lemma~\ref{lem:big}.

\begin {lemma}
The differentials on the 2-modules $\cpda$ and $\cpdd$ preserve the Alexander gradings and decrease the Maslov gradings by one. Also, the actions of the left, top and bottom algebra-modules on $\cpda$ and $\cpdd$ are compatible with the bigradings.
\end {lemma}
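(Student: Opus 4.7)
The plan is to mirror the proof of Lemma~\ref{lem:big} in structure, adapting each piece from caps to cups and from $\haa/\had$ to $\hda/\hdd$. The verification again splits into two tasks: checking that $\del$ preserves $A$ and drops $\mu$ by one on each of $\cpda$ and $\cpdd$, and checking that every generator of the relevant algebra-module acts compatibly with the bigradings.

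First I would handle $\cpda$. Its underlying complex is built just as in $CPD^-$ out of $\T(k') \oast_{\I(k')} \kk\langle\S(\hda)\rangle$, and its differential counts rectangles and half-strips; grading behavior for these terms is the standard computation of \cite[Propositions~6.1 and~6.3]{LOTplanar} applied in the quadrant $\hda$. The top action of $\T(k')$ is evidently bigraded. For the left action of $\L(N-k)$, idempotents, horizontal chords, and nilCoxeter elements are direct transcriptions of their counterparts in the right action of $\R(k)$ on $\cpaa$, and pose no new issue. The one genuinely new case is the cup action $(\phi * \xda) \cdot \zeta_j$: it inserts a generator point $(j,i)$ via an empty quarter-strip $Q$ and appends the half-chord $\mu_i$ to $\phi$. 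This is dual to the cap calculation \eqref{eq:aar4again} in Lemma~\ref{lem:big}: one decomposes $\I(\Xs,\{(j,i)\})$ and $\I(\Os,\{(j,i)\})$ into a contribution of $\zeta_j$ (giving $L_X'$ and $L_O'$ terms), a contribution of $\mu_i$ (giving an $L_X, L_O$ count inside $Q$), and the remainder recorded by $U(Q)$. The cup correction $-(p-m)(k'-t+(p-m-1)/2)$ in $\mu$ on $\L(N-k)$ was inserted precisely to absorb the remaining crossing count.

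Next I would handle $\cpdd$. Its differential has four kinds of contributions; the first three (rectangles, half-strips with an edge on $\ell$, and half-strips with an edge on $\ell'$) reduce to grading computations already done for $\cpa$, $\cpd$, and for the top and bottom algebra-modules. The new ingredient is the corner quarter-strip term $\zeta_i \cdot (\nu_j * (\xdd \setminus \{(i,j)\}))$. Removing $(i,j)$ from $\xdd$ changes both $A(\xdd)$ and $\mu(\xdd)$, where in the Maslov case one must also differentiate the quadratic correction $(\#\xdd + k + k' - N)(\#\xdd)$. One then checks that the total shift combines with the gradings of $\nu_j$ on $\B(N-k')$ and $\zeta_i$ on $\L(N-k)$ (the latter carrying the cup correction) to match $-A(U(Q))$ and $-\mu(U(Q))-1$ respectively. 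The algebra-module actions are verified by the same generator-by-generator analysis used in Lemma~\ref{lem:big}.

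The main obstacle is the Maslov bookkeeping for the corner quarter-strip in $\del\xdd$: three separate correction terms -- the quadratic piece $(\#\xdd + k + k' - N)(\#\xdd)$ in $\mu(\xdd)$, the cup correction in $\mu$ on $\L(N-k)$, and the $\#\Oaa$ and $\#\Xaa$ corrections buried in the definitions of $A$ and $\mu$ on $\L(N-k)$ -- must all balance simultaneously against the $O$- and $X$-multiplicities inside $Q$. Once this identity is verified, all remaining checks are either routine transcriptions of Lemma~\ref{lem:big} or direct applications of the corresponding computations in \cite{LOTplanar}.
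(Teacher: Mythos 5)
Your proposal is correct and takes essentially the same approach as the paper, which gives no proof beyond the single sentence declaring it ``analogous to that of Lemma~\ref{lem:big}.'' You correctly isolate the cup action on $\cpda$ and the corner quarter-strip term in the differential of $\cpdd$ as the genuinely new cases, noting that the remaining pieces reduce to arguments already carried out for $\cpaa$, $\cpad$, and in \cite{LOTplanar}.
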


\subsection {The pairing theorem for $CPD^-$}
\begin {theorem}
\label {thm:cpd}
Let $\H^D = \hda \cup_{\ell'} \hdd$ be a partial planar grid diagram (of type $D$) of height $N$ and width $N-k$, horizontally sliced into $\hda$ and $\hdd,$ with the height of $\hda$ being $k'.$ There is an isomorphism of differential bigraded modules over $\alg(N):$ 
\begin {equation}
\label {eq:cpd_deco}
CPD^-(\H^D) \cong \bigl( \cpdah \bigr) \odot_{\L(N-k)} \bigl( \cpddh \bigr).
\end {equation}
\end {theorem}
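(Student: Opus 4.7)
The plan is to mirror the proof of Theorem~\ref{thm:cpa}: first, establish a bijection between generators of the two sides; second, check that the differentials agree by a case analysis on the location of rectangles, half-strips, and quarter-strips with respect to the interface $\ell'$; and third, verify compatibility with the action of $\alg(N) \cong \T(k') \odot_{\nil} \B(N-k')$.

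First I would match generators. By the idempotent argument analogous to Lemma~\ref{lem:deco}, a typical element of the right-hand side of \eqref{eq:cpd_deco} can be reduced to $\alpha \odot \beta$ with $\alpha \in \cpda_m$ and $\beta \in \cpdd_m$ for a common $m$; via the defining Equations~\eqref{eq:apple} and \eqref{eq:quince}, these take the form $(\phi * \xda) \odot \bigl(\sigma \cdot ((\chi * \xdd))\bigr)$ with $\phi \in \T(k')$, $\sigma \in \L(N-k)$, $\chi \in \B(N-k')$, and $\xda, \xdd$ partial planar generators. The idempotent constraints force $\im(\eps_{\xda})$ and $\im(\eps_{\xdd})$ to be complementary in $\{k+1,\dots,N\}$, so that $\xd := \xda \cup \xdd \in \S(\H^D)$. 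Using the decomposition $\alg(N) \cong \T(k') \odot_{\nil} \B(N-k')$ from Theorem~\ref{thm:bnt}, the ``leftover" horizontal algebra data $\phi \odot \chi$ (and any nilCoxeter twists from $\sigma$) assembles into a strands element $(S,T,\phi') \in \alg(N,k)$ acting on $\xd$, identifying the tensor with $(S,T,\phi') * \xd$. Lemma~\ref{lem:big}-type computations in the $D$-setting give preservation of the Alexander and Maslov bigradings.

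Next I would verify the differentials agree by decomposing $\partial \xd$ according to where the counted region sits relative to $\ell'$. Rectangles and half-strips fully in $\hda$ arise from $\partial \xda$ in $\cpda$, while those fully in $\hdd$ come from $\partial \xdd$ in $\cpdd$; these are immediate from the definitions. Rectangles crossing $\ell'$ split into a half-strip in $\hda$ with top edge on $\ell'$ and a half-strip in $\hdd$ with bottom edge on $\ell'$: the latter is precisely the third term $\lambda_{i,j} \cdot \ydd$ of $\partial \xdd$, and after moving $\lambda_{i,j}$ across the $\odot_{\L(N-k)}$ tensor, it acts on $\cpda$ via \eqref{eq:aar2} (the $\cpda$ analog), producing exactly the half-strip in $\hda$. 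Summing recovers the full rectangle.

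The main obstacle will be the case of half-strips in $\H^D$ whose left edge is a chord $\rho_{i,j}$ on $\ell$ that crosses $\ell'$ (i.e.\ $i \leq k' < j$). Under the decomposition $\rho_{i,j} = \mu_i \odot \nu_j$, such a half-strip should be assembled from a quarter-strip in $\hda$ (with left edge $\mu_i$ on $\ell$ and top edge on $\ell'$) and a quarter-strip in $\hdd$ (with left edge $\nu_j$ on $\ell$ and bottom edge on $\ell'$). The $\hdd$-quarter-strip is precisely the fourth term of $\partial \xdd$ involving $\zeta_\ell \cdot (\nu_j * (\xdd \setminus \{(\ell,j)\}))$; the cup $\zeta_\ell$, pushed across the $\odot_{\L(N-k)}$ tensor, acts vertically on $\cpda$ by the cup action, which by definition counts precisely the $\hda$-quarter-strip attached to $\mu_i$. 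Carefully bookkeeping shows the resulting tensor reassembles as $(\mu_i \odot \nu_j)*(\xda \cup \xdd) = \rho_{i,j} * \xd$, matching the corresponding term of $\partial \xd$. A computation analogous to \eqref{eq:recoverA} (now with cups in place of caps, and with half-strips appearing in the differential rather than the action) makes this precise.

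Finally, compatibility with the $\alg(N)$-action follows from local commutativity of the 2-module structures (Definition~\ref{def:amods}) together with the identification of generators: for idempotents and chords not crossing $\ell'$ the matching is immediate, while for a chord $\rho_{i,j}$ crossing $\ell'$, the decomposition $\rho_{i,j} = \mu_i \odot \nu_j$ in $\T(k') \odot_\nil \B(N-k')$ combined with Definition~\ref{def:odot} produces the expected action. The hardest verification throughout is keeping track of nilCoxeter twists and $U$-powers in the cup/quarter-strip case; once that bookkeeping is set up, the proof concludes as in Theorem~\ref{thm:cpa}.
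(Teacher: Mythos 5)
Your proposal follows essentially the same route as the paper's proof: identify generators of both sides, observe that rectangles and half-strips contained in a single quadrant and rectangles crossing $\ell'$ are handled as in the vertical-slicing pairing theorem (Theorem~\ref{thm:lot2}), and then isolate the new case of half-strips on $\ell$ crossing $\ell'$, which is resolved by pushing a cup across the $\odot_{\L(N-k)}$ tensor so that the two quarter-strips recombine into the half-strip --- exactly the content of Equation~\eqref{eq:recoverD}. The only slip is in your final identification: the assembled term should be $\rho_{i,j} * \yd$ with $\yd = \yda \cup \ydd$ the \emph{output} generator (after removing the point $(l,j)$ from $\xdd$ and adding $(l,i)$ to $\xda$), not $\rho_{i,j}*\xd$.
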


\begin {proof}
The two sides
of ~\eqref{eq:cpd_deco} are certainly isomorphic as graded $\kk$-modules, and the obvious
isomorphism respects the action of $\alg(N) \cong \T(k') \odot_{\nil} \B(N-k')$ by horizontal multiplication on the left. It remains to  identify the differentials. The differential on $ CPD^-(\H^D)$ includes counts of empty rectangles and empty half-strips with one edge on $\ell.$ The rectangles and half-strips contained in either $\hda$ or $\hdd$ appear in one of the factors on the right-hand side. The rectangles that cross $\ell'$ are recovered from the tensor product on the right-hand side just as in the proof of Theorem~\ref{thm:lot2}.

The new contributions to consider are half-strips (with one edge on $\ell$) that cross the interface $\ell'.$ Suppose in $CPD^-$ we have such a half-strip $$H \in \Halfx(\rho_{i,j};\xda \odot \xdd, \yda \odot \ydd ).$$ We see that 
\begin {equation}
\label {eq:recoverD}
\del (\xda \odot \xdd)  =   \xda \odot (\del \xdd) + \dots = \xda \odot (\zeta_{l} \cdot (\nu_j * \ydd)) + \dots
\end {equation}
$$= (\xda \cdot \zeta_{l}) \odot (\nu_j * \ydd) + \dots  = (\mu_i * \yda) \odot (\nu_j * \ydd) + \dots $$
$$= (\mu_i \cdot \nu_j) * (\yda \odot \ydd) + \dots =  \rho_{i,j} * (\yda \odot \ydd) + \dots,$$
recovering the differential on $CPD^-.$ See Figure~\ref{fig:pairD}.\end {proof}

\begin{figure}[ht]
\begin{center}
\input{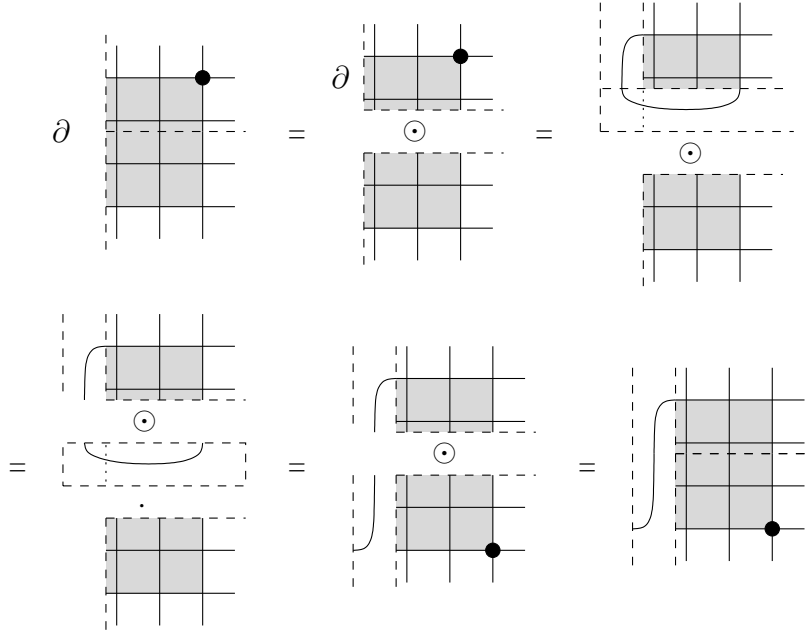}
\end{center}
\caption {{\bf Equation~\eqref{eq:recoverD}.} The differential on $CPD^-$ can be recovered from the tensor product of $\cpda$ and $\cpdd.$ }
\label{fig:pairD}
\end{figure}

Theorems ~\ref{thm:cpa} and ~\ref{thm:cpd} together form the statement of Theorem~\ref{thm:main2}. As mentioned in the Introduction, by combining them with Theorem~\ref{thm:lot2}, we have the following:

\begin {corollary}
\label {cor:bitensor}
Let 
$$\H = (\haa \cup_{\ell'} \had) \cup_{\ell} (\hda \cup_{\ell'} \hdd)$$ be a planar grid diagram of size $N$, vertically and horizontally sliced into four quadrants. Then, there is an isomorphism of bigraded chain complexes over $\kk$:
$$CP^-(\H) \cong \odot \oast \bigr(\cpaah, \cpadh, \cpdah, \cpddh \bigl).$$
\end {corollary}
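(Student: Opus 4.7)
The plan is to obtain the desired isomorphism by composing the three pairing results already established in the paper. First I would apply Theorem~\ref{thm:lot2} (the vertical slicing result of Lipshitz--Ozsv\'ath--Thurston), which gives
$$CP^-(\H) \cong CPA^-(\H^A) \oast_{\alg(N)} CPD^-(\H^D)$$
as bigraded chain complexes over $\kk$. Next, I would apply the two halves of Theorem~\ref{thm:main2}, namely Theorem~\ref{thm:cpa} and Theorem~\ref{thm:cpd}, to rewrite each factor as a vertical tensor product of cornered 2-modules:
$$CPA^-(\H^A) \cong \cpaah \odot_{\R(k)} \cpadh, \qquad CPD^-(\H^D) \cong \cpdah \odot_{\L(N-k)} \cpddh.$$
Both isomorphisms are of differential bigraded modules over $\alg(N)$.

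The next step is to substitute these into the first isomorphism. At this point I need to make sure the algebra over which the horizontal tensor product is taken is interpreted consistently: by Theorem~\ref{thm:bnt} (and its graded version, Proposition~\ref{prop:gradedbnt}) we have $\alg(N) \cong \T(k') \odot_{\nil} \B(N-k')$ as differential graded algebras, so we may replace $\oast_{\alg(N)}$ by $\oast_{(\T(k') \odot_\nil \B(N-k'))}$ without loss. Once this identification is made, the resulting expression
$$\bigl(\cpaah \odot_{\R(k)} \cpadh\bigr) \oast_{(\T(k') \odot_\nil \B(N-k'))} \bigl(\cpdah \odot_{\L(N-k)} \cpddh\bigr)$$
is literally the double tensor product of Definition~\ref{def:odotoast} applied to $(\cpaah, \cpadh, \cpdah, \cpddh)$ over $(\T(k'), \R(k), \B(N-k'), \L(N-k))$ and $\nil$, which is by definition $\odot\oast(\cpaah, \cpadh, \cpdah, \cpddh)$.

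The main obstacle to be careful about is not any new combinatorial content, but rather the bookkeeping involved in checking that the chain of isomorphisms is compatible at each stage with the relevant module structures: the isomorphism from Theorem~\ref{thm:cpa} must be an isomorphism of right modules over $\alg(N) \cong \T(k') \odot_\nil \B(N-k')$, and similarly the isomorphism from Theorem~\ref{thm:cpd} must be an isomorphism of left modules over the same algebra, so that tensoring them over $\alg(N)$ produces an isomorphism of chain complexes. This compatibility has essentially already been verified in the proofs of Theorems~\ref{thm:cpa} and \ref{thm:cpd}, where the actions of half-chords $\mu_i, \nu_j$ and of caps/cups $\xi_l, \zeta_l$ are shown to recover, via the local commutativity of the 2-module structure, the action of the chord $\rho_{i,j}$ crossing $\ell'$ (see Equations~\eqref{eq:recoverA} and \eqref{eq:recoverD}). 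Consequently the substitution is well-defined at the level of bigraded chain complexes, and the corollary follows by composing the three isomorphisms and unpacking Definition~\ref{def:odotoast}.
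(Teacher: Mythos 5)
Your proposal is correct and takes the same route the paper implicitly takes: the corollary is stated as a direct combination of Theorem~\ref{thm:lot2} with the two halves of Theorem~\ref{thm:main2}, and your substitution into Definition~\ref{def:odotoast} is exactly the intended unpacking. The extra care you take in checking that the isomorphisms from Theorems~\ref{thm:cpa} and~\ref{thm:cpd} respect the $\alg(N)$-module structures is appropriate and is indeed already built into those theorems' statements (``isomorphism of differential bigraded modules over $\alg(N)$''), so the composition is legitimate.
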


\bibliographystyle{custom}
\bibliography{biblio}

\end{document}